\documentclass{amsart}   	
\usepackage{parskip}
\usepackage{amssymb,amsthm}
\usepackage{amsthm}
\usepackage{graphicx}
\usepackage{enumitem}
\usepackage{tikz}
\usepackage{tikz-cd}
\usetikzlibrary{calc}
\usetikzlibrary{tikzmark} 
\usetikzlibrary{decorations.pathmorphing}
\usepackage{hyperref}
\usepackage{adjustbox}
\usepackage{mathtools}
\usepackage{mathabx}
\usepackage[all]{xy}

\tikzset{curve/.style={settings={#1},to path={(\tikztostart)
    .. controls ($(\tikztostart)!\pv{pos}!(\tikztotarget)!\pv{height}!270:(\tikztotarget)$)
    and ($(\tikztostart)!1-\pv{pos}!(\tikztotarget)!\pv{height}!270:(\tikztotarget)$)
    .. (\tikztotarget)\tikztonodes}},
    settings/.code={\tikzset{quiver/.cd,#1}
        \def\pv##1{\pgfkeysvalueof{/tikz/quiver/##1}}},
    quiver/.cd,pos/.initial=0.35,height/.initial=0}

\tikzset{tail reversed/.code={\pgfsetarrowsstart{tikzcd to}}}
\tikzset{2tail/.code={\pgfsetarrowsstart{Implies[reversed]}}}
\tikzset{2tail reversed/.code={\pgfsetarrowsstart{Implies}}}
\tikzset{no body/.style={/tikz/dash pattern=on 0 off 1mm}}

\graphicspath{ {./images/} }

\newtheorem{thm}{Theorem}[section]
\newtheorem{prop}[thm]{Proposition}
\newtheorem{lem}[thm]{Lemma}
\newtheorem{corollary}[thm]{Corollary}

\theoremstyle{definition}
\newtheorem{defn}[thm]{Definition}
\newtheorem{remark}[thm]{Remark}
\newtheorem{ex}[thm]{Example} 
 
\newtheorem{conjecture}[thm]{Conjecture}

\newcommand{\uxa}{\ensuremath{(\underline{X},\underline{A})}}

\newcommand{\ux}{\ensuremath{(\underline{X},\underline{\ast})}}
\newcommand{\us}{\ensuremath{(\underline{S},\underline{\ast})}}
 
\newcommand{\caa}{\ensuremath{(\underline{CA},\underline{A})}}

\newcommand{\kl}{\ensuremath{(\underline{K},\underline{L})}}

\usepackage[utf8]{inputenc}

\title{Homotopy exponents of polyhedral products}
\author[B. Eldridge]{Briony Eldridge}
\address{
Beijing Key Laboratory of Topological Statistics and Applications for Complex Systems, Beijing Institute of Mathematical Sciences and Applications (BIMSA)}
\email{eldridge@bimsa.cn}
\date{May 2025}
\subjclass[2020]{Primary 55Q05; Secondary 55P62, 55U10}
\keywords{Moore's conjecture, elliptic, hyperbolic, polyhedral product, polyhedral join product}

\begin{document}

\begin{abstract}
    We study Moore's conjecture and homotopy exponents for polyhedral products. For $\caa^K$ where each $A_i$ is finite and has torsion-free homology, we prove that if $\caa^K$ is rationally hyperbolic, then it has no homotopy exponent at any odd prime. Under the additional hypothesis $\Sigma A_i$ is homotopy equivalent to a finite-type wedge of simply-connected spheres, we show Moore's conjecture holds for $\caa^K$. We also give criteria such that, for a large family of polyhedral join products, the associated polyhedral products are rationally hyperbolic, mod-$p^r$ hyperbolic for all but finitely many primes, and have no homotopy exponent at all but finitely many primes.
\end{abstract}

\maketitle

\section{Introduction}

Moore's Conjecture proposes a profound link between the rational homotopy groups and the torsion components of the homotopy groups in finite $CW$-complexes. It suggests that the behaviour of one reflects constraints on the other. Let $X$ be a finite $CW$-complex. The space $X$ is \emph{rationally elliptic} if it has finitely many rational homotopy groups and \emph{rationally hyperbolic} if it has infinitely many rational homotopy groups. The rational dichotomy theorem of F{\'e}lix, Halperin and Thomas \cite[Chapter 33]{FelixHalperinThomas2001} states that if $X$ is rationally hyperbolic, then its rational homotopy groups grow exponentially; there is no polynomial growth case for the rational homotopy groups of a rationally hyperbolic space.  

The torsion condition appearing in Moore's Conjecture is expressed in terms of homotopy exponents. The \emph{homotopy exponent} of $X$ at a prime $p$ is the least power of $p$ that annihilates the $p$-torsion in the homotopy groups of $X$. If no such power exists, then $X$ has $p$-torsion of arbitrarily large order, and we say that $X$ has no exponent at $p$. Moore's conjecture states the following. 
\begin{conjecture}
(\textbf{Moore's Conjecture})
    Let $X$ be a finite, simply-connected CW-complex. Then the following are
equivalent:
\begin{itemize}
    \item  $X$ is rationally elliptic;
    \item $X$ has a finite homotopy exponent at every prime $p$;
    \item $X$ has a finite homotopy exponent at some prime $p$.
\end{itemize}
\end{conjecture}

We can reformulate Moore's conjecture to be a statement about rationally hyperbolic spaces; a finite, simply-connected $CW$-complex is rationally hyperbolic if and only if it has no finite homotopy exponent at any prime.  We can also consider a torsion version of hyperbolicity, where a $CW$-complex $X$ is $\mathrm{mod}$-$p^r$ \emph{hyperbolic} if the number of $\mathrm{mod}$-$p^r$ summands in the homotopy groups grows exponentially, and in \cite{HuangTheriault2024}, the authors conjectured that if $X$ is rationally hyperbolic, then it is $\mathrm{mod}$-$p^r$ hyperbolic for all primes $p$, $r \geq 1$. 

Moore's conjecture has been verified in many cases. Spheres \cite{Jamessphere, Toda1956}, finite $H$-spaces \cite{Long1978}, $H$-spaces with finitely generated cohomology \cite{ChacholskiPitschSchererStanley2008} and odd primary Moore spaces \cite{Neisendorfer} are all examples of rationally elliptic spaces with finite exponents at all primes. On the other side of the conjecture, wedges of simply-connected spheres, most torsion-free two-cell complexes \cite{NeisendorferSelick1982}, and torsion-free suspensions \cite{Selick_1983} are examples of rationally hyperbolic spaces that have no exponent at any prime. 

Partial versions of Moore's conjecture have also been considered. For example, a rationally hyperbolic loop space with $p$-torsion-free homology has no $p$-exponent \cite{Stelzer2004}, a rationally elliptic space has finite exponents at all but finitely many primes \cite{McW}, and certain 2-cones satisfy the conjecture for all but finitely many primes \cite{Anick1989}.

In this work, we study Moore's conjecture, homotopy exponents, and mod-$p^r$ torsion for polyhedral products. Polyhedral products are spaces obtained by gluing products of spaces together according to the underlying simplicial complex. Their homotopy type reflects both the individual homotopy-theoretic properties of the constituent spaces and the combinatorics of the associated complex. More precisely, let $K$ be a simplicial complex on the vertex set $[m] = \{1,\dots,m\}$, and let $\uxa =~ \{(X_i,A_i)\}_{i=1}^m$ be a sequence of pointed $CW$-pairs. Define 
\[\uxa^{\sigma} = \prod_{i=1}^m Y_i, \;\;\: \; Y_i = \begin{cases}
    X_i & i \in \sigma \\
    A_i & i \notin \sigma.
\end{cases}\]

The polyhedral product is defined as \[\uxa^K = \bigcup_{\sigma \in K} \uxa^{\sigma} \subseteq \prod_{i =1}^m X_i.\]
In recent years, substantial progress has been made in understanding these spaces from multiple perspectives, including toric topology, combinatorics, geometric group theory, and complex geometry \cite{bahri2024polyhedralproductsfeatureshomotopy}. 
An important family consists of polyhedral
products on the pairs $(D^2_i,S^1_i)$. These are called \emph{moment-angle complexes}. In \cite{HAO_2018}, the authors showed that Moore's conjecture holds for generalised moment-angle complexes (polyhedral products on the pair $(D^{n}_i,S^{n-1}_i)$), and gave necessary and sufficient conditions for a polyhedral product to be rationally elliptic or hyperbolic. In \cite{Kim2018}, it was shown that real moment-angle complexes (polyhedral products on the pair $(D^{1}_i,S^{0}_i)$) also satisfy Moore's conjecture. Both real and generalised moment-angle complexes are examples of polyhedral products of the form $\caa^K$. More generally, in \cite{HAO_2018}, it was shown that polyhedral products associated to arbitrary pairs $\uxa$ are rationally elliptic only in highly restrictive cases. Our first result concerns homotopy exponents for rationally hyperbolic $\caa^K$. 

\begin{thm}[Theorem \ref{thm: mooretorsionfreepp} in text]
    Let $K$ be a simplicial complex on the vertex set $[m]$ and let $\caa$ be a sequence of pairs such that for all $i \in [m]$, $A_i$ is a finite $CW$-complex, and each $A_i$ has torsion-free homology. Then if $\caa^K$ is rationally hyperbolic, $\caa^K$ has no homotopy exponent for any odd prime $p$.
\end{thm}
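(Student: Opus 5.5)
We plan to reduce to a known rationally hyperbolic space whose loop space is a retract of $\Omega\caa^K$, and which itself has no exponent at odd primes. The main tool is the result (Grbić–Theriault / Iriye–Kishimoto type) that for $\caa^K$ the loop space splits. First, if $\caa^K$ is rationally hyperbolic, then $K$ must contain a minimal missing face structure forcing hyperbolicity. Concretely, for each full subcomplex $K_I$, $\Omega\caa^{K_I}$ is a retract of $\Omega\caa^K$, so homotopy groups of $\caa^{K_I}$ are summands of those of $\caa^K$. It therefore suffices to find a full subcomplex $K_I$ such that $\caa^{K_I}$ has no exponent at odd $p$.

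The key steps, in order, are as follows. \textbf{(1)} Using the rational classification of \cite{HAO_2018}, rational hyperbolicity of $\caa^K$ means either that some $A_i$ is "large" (for instance $\Sigma A_i$ is rationally not a sphere, or $A_i$ itself is rationally hyperbolic), or that the combinatorics of $K$ contains a full subcomplex which is, say, two disjoint points with appropriate $A_i$, or a subcomplex with at least two independent missing faces. \textbf{(2)} In the combinatorial case, pick a minimal such $K_I$, for instance two vertices $\{i,j\}$ with no edge. Then $\caa^{K_I}=A_i*A_j\simeq\Sigma A_i\wedge A_j$. More generally, take a full subcomplex for which $\caa^{K_I}$ is, after suspension splitting (Bahri–Bendersky–Cohen–Gitler), homotopy equivalent to a suspension $\Sigma Y$ with $Y$ finite with torsion-free homology, having at least two cells in the rationally relevant range. \textbf{(3)} Invoke Selick's theorem \cite{Selick_1983} that torsion-free suspensions that are rationally hyperbolic (that is, not rationally a sphere) have no exponent at any odd prime. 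The torsion-free hypothesis on $A_i$ guarantees via the Künneth theorem that smash products and joins of the $A_i$ remain torsion-free, and finiteness is preserved. \textbf{(4)} In the case where hyperbolicity comes from a single $A_i$ rather than the combinatorics, the fibration $\Omega\caa^K\to \prod \Omega CA_i$ is trivial, so we need a different retract. Here we use that the fibre of $\caa^K\to\prod CA_i$ relates to $\Sigma A_i$ pieces; more simply, take a missing edge or a ghost vertex situation giving $\Sigma A_i\wedge A_j$ containing $\Sigma A_i$-like torsion-free summands.

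The main obstacle is the case analysis in (1)–(2). We must show that whenever $\caa^K$ is rationally hyperbolic, some retract of $\Omega\caa^K$ is $\Omega\Sigma Y$ with $Y$ torsion-free and $\Sigma Y$ not rationally a single sphere. If $K$ is a simplex, $\caa^K$ is contractible, so it is not hyperbolic. Otherwise take a minimal missing face $\sigma$. Then $K_\sigma=\partial\sigma$ and $\caa^{\partial\sigma}\simeq\Sigma^{|\sigma|-1}A_{\sigma_1}\wedge\cdots$, which is a retract. If this suspension is rationally a single sphere for every minimal missing face, we then argue, via \cite{HAO_2018}, that ellipticity forces $K$ to be such that $\caa^K$ is a product of such spheres, contradicting hyperbolicity. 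Otherwise we apply Selick. We expect that checking that this dichotomy exactly matches the rational ellipticity criterion is the hardest part; if it fails, we would instead use the Porter/Grbić–Theriault decomposition of $\Omega\caa^K$ for flag-like pieces.
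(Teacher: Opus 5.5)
\textbf{The gap.} Your reduction to the retracts $\Omega\caa^{\partial\sigma}$, for minimal missing faces $\sigma$, works as far as it goes. If some $\caa^{\partial\sigma}\simeq\Sigma^{|\sigma|-1}\bigwedge_{v\in\sigma}A_v$ is not rationally a single sphere, it is a rationally hyperbolic torsion-free finite suspension, and Selick's theorem finishes.

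The gap is the complementary step: ``if every $\caa^{\partial\sigma}$ is rationally a sphere, then \cite{HAO_2018} forces $\caa^K$ to be elliptic''. This is false. Theorem~\ref{moore} also requires the minimal missing faces to be pairwise disjoint, and hyperbolicity can come purely from two minimal missing faces that meet. For example, let $K$ be three disjoint vertices and $A_i=S^1$. Each $\caa^{\partial\sigma}\simeq S^3$, yet $\caa^K\simeq (S^3)^{\vee 3}\vee(S^4)^{\vee 2}$ is hyperbolic. Likewise every $(D^n,S^{n-1})^K$ has all $\caa^{\partial\sigma}$ spheres.

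Your step (1) also has the combinatorics backwards. A missing edge, or several disjoint missing faces, gives a product of the $\caa^{\partial\sigma_j}$, which is elliptic when each factor is.

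In the intersecting case no single $\Omega\caa^{\partial\sigma}$ detects the missing exponent, and your fallbacks do not close it. The BBCG splitting only decomposes $\Sigma\caa^{K_I}$. It says nothing about unstable exponents of $\caa^{K_I}$, nor about $\caa^{K_I}$ itself being a suspension. A flag-complex decomposition is not available for arbitrary $K$.

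\textbf{The missing ingredient.} You need \cite[Theorem 4.2]{HAO_2018}: if two minimal missing faces $\sigma_1,\sigma_2$ of $K$ intersect, then $\Omega(\caa^{\partial\sigma_1}\vee\caa^{\partial\sigma_2})$ is a retract of $\Omega\caa^K$. Since $|\sigma_j|\geq 2$, this wedge is $\Sigma$ of a finite complex with torsion-free homology (by K\"unneth). It is rationally a wedge of at least two spheres, the $A_v$ being rationally nontrivial. So Selick's theorem applies to it.

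With that, the argument becomes the clean dichotomy used in the paper:
\begin{itemize}
\item If the minimal missing faces are pairwise disjoint, then $K\cong\Delta\ast\partial\sigma_1\ast\cdots\ast\partial\sigma_n$ and $\Omega\caa^K\simeq\prod_j\Omega\caa^{\partial\sigma_j}$. Hence some factor is a hyperbolic torsion-free suspension, which is the part your argument does cover.
\item Otherwise, use the wedge retract above.
\end{itemize}

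Your step (4) is then unnecessary. A vertex $v$ lying in no minimal missing face is a cone point of $K$, so $CA_v$ splits off as a contractible factor. If instead $v\in\sigma$ for a minimal missing face $\sigma$, the retract $\caa^{\partial\sigma}$ already detects $A_v$.
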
 

When $\Sigma A_i \in \mathcal{W}$, Theorem \ref{thm: aiinw} gives a stronger conclusion:  Moore's conjecture holds for $\caa^K$. 

In \cite{stanton2026anicksconjecturepolyhedralproducts}, the authors showed that there exists some finite set of primes such that, after localising away from these primes, Moore's conjecture holds for $\caa^K$. In general, this set of primes cannot be made explicit. Theorem \ref{thm: mooretorsionfreepp} verifies Moore's conjecture for rationally hyperbolic $\caa^K$ at odd primes, which reduces the number of primes needed to localise away from dramatically. 

A natural question to consider is how operations on the underlying simplicial complex affect the homotopy type of a polyhedral product. The question of when polyhedral products associated with simplicial complexes constructed via simplicial operations satisfy Moore's Conjecture has been studied in the context of connected sums \cite{Theriault_2024} and stellar subdivisions \cite{Theriault2024stella}. In \cite{Eldridge_2026}, the effect of the polyhedral join operation on the homotopy-theoretic properties of the associated polyhedral product was considered. To be exact, let $M$ be a simplicial complex on $[m]$. For $1\leq i \leq m$, let $(\underline{K}, \underline{L})= \{(K_i,L_i)\}^{m}_{i=1}$ be a sequence of pairs of simplicial complexes, where $L_i$ is a subcomplex of $K_i$, considered to be on the vertex set of $K_i$. Define $(\underline{K},\underline{L})^{*\sigma}$ as the following subcomplex of $K_1*\dots*K_m$: 
    \[(\underline{K},\underline{L})^{*\sigma} = \overset{m}{\underset{i=1} \ast} \; Y_i,\;\; Y_i = \begin{cases} K_i & i \in \sigma \\ L_i & \mathrm{otherwise}. \end{cases}\]
    The \emph{polyhedral join product} is defined as 
    \[(\underline{K},\underline{L})^{*M} = \bigcup_{\sigma \in M} (\underline{K},\underline{L})^{*\sigma}.\]
    It was shown that, under certain hypotheses, $\Omega \caa^{\kl^{*M}}$ is homotopy equivalent to a product of spheres and loops on spheres, thereby satisfying Moore's conjecture. The second result of this paper considers polyhedral products of the form $\ux^{\kl^{*M}}$. We prove that, for certain polyhedral join complexes, polyhedral products associated with polyhedral join products are rationally hyperbolic and mod-$p^r$ hyperbolic for all but finitely many primes. Recall that for any $i \in  M$ the \emph{restriction} of a simplicial complex at $i$ is $M \setminus i = \{ \sigma \in M \mid i \cap \sigma = \emptyset\}$ and the \emph{link} of $i$ is $\mathrm{lk}_M(i) = \{ \tau \in M \mid \tau \cap i = \emptyset, \tau \cup i \in M \}$. 

\begin{prop}[Proposition \ref{prop: hyperbolic} in text]
    Let  $\kl^{*M}$ be a polyhedral join product. If for some vertex $i \in [m]$,  $L_i$ is a proper full subcomplex of $K_i$ and $\mathrm{lk}_M(i) \neq M \setminus i$, then $\ux^{\kl^{*M}}$ is rationally hyperbolic, mod-$p^r$ hyperbolic for all but finitely many primes $p$ and $r \geq 1$, and has no exponent at all but finitely many primes $p$. 
\end{prop}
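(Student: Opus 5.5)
Our strategy is to find, inside $\ux^{\kl^{*M}}$, a retract that is a wedge of at least two simply-connected spheres (or, more generally, a suspension containing such a wedge as a retract), and then transfer the known properties of wedges of spheres to the whole polyhedral product. The transfer rests on the standard retraction principle. If $\ux^{K'}$ is a retract of $\ux^{K}$ for a full subcomplex $K' \subseteq K$, then $\Omega \ux^{K'}$ is a retract of $\Omega \ux^{K}$. Consequently, rational hyperbolicity, mod-$p^r$ hyperbolicity and the absence of an exponent all pass from $\ux^{K'}$ to $\ux^{K}$. Here we use the well-known fact that for any full subcomplex, $\ux^{K'}$ retracts off $\ux^{K}$ by projecting onto the coordinates in the vertex set of $K'$.

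The first step is to find a suitable full subcomplex of $\kl^{*M}$. Since $\mathrm{lk}_M(i) \neq M \setminus i$, there is a simplex $\tau \in M\setminus i$ with $\tau \cup i \notin M$; we may choose $\tau$ minimal with this property, so that every proper face $\tau'\subsetneq\tau$ satisfies $\tau' \cup i \in M$. Since $L_i$ is a proper full subcomplex of $K_i$, there is a vertex $v \in K_i$ with $v\notin L_i$. For each $j \in \tau$ we pick a vertex $w_j \in K_j$ not in $L_j$; if some $K_j = L_j$, we instead shrink $\tau$ to the vertices where this is possible, and the minimality argument still needs checking. Next consider the full subcomplex $N$ of $\kl^{*M}$ on $\{v\} \cup \{w_j\}_{j\in\tau}$. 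A set of these vertices spans a simplex exactly when the corresponding indices form a simplex of $M$. Hence $N$ is isomorphic to the full subcomplex of $M$ on $\tau \cup \{i\}$, which by minimality is the boundary of the simplex on $\tau\cup\{i\}$. In the simplest case $|\tau|=1$, $N$ is two disjoint points, and $\ux^{N} = X_v \vee X_w$.

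Next, we treat general $N=\partial\Delta^{k}$. Then $\ux^{\partial\Delta^k}$ is the fat wedge of the $X$'s, and we cannot extract two wedge summands directly from it. It is therefore preferable to arrange $|\tau|=1$, or to use vertices inside $K_i$ itself. Concretely, because $L_i$ is full and proper, we can also take a vertex $u\in L_i$ if $L_i\neq\emptyset$; then $u$ and $v$ together with $w\in K_j$, for $j$ with $\{i,j\}\notin M$, give a full subcomplex that is a point disjoint from a segment or from two points. In every case, the key is to produce a full subcomplex $N$ that is disconnected. For disconnected $N=N_1\sqcup N_2$, the polyhedral product $\ux^{N}$ contains $\ux^{N_1}\vee\ux^{N_2}$ together with a suspension of the join, $\Sigma\,\Omega\ux^{N_1}\wedge\Omega\ux^{N_2}$-type terms. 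Alternatively, we pass to a single vertex from each component, which is again full, and obtain $X_a\vee X_b$ as a retract.

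Finally, we conclude. $\Omega(X_a\vee X_b)\simeq \Omega X_a\times\Omega(\Sigma \Omega X_a\wedge X_b \vee X_b)$ contains $\Omega(S^{n}\vee S^{m})$ as a retract for bottom cells, assuming the $X_i$ are simply-connected with nontrivial rational homology, and by Hilton–Milnor, $\Omega(S^n\vee S^m)$ is a product of loops on spheres with exponential growth of rational ranks and of $\mathbb{Z}/p^r$ summands for all $p$ outside the finitely many primes where the integral cells of $X_a,X_b$ have torsion. This yields rational hyperbolicity, mod-$p^r$ hyperbolicity and no exponent at all but finitely many primes, the exceptions being those needed to split off the bottom spheres $p$-locally. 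The main obstacle is the combinatorial step: guaranteeing a disconnected full subcomplex when the minimal $\tau$ has more than one vertex. My first attempt at that step is to use both $v\notin L_i$ and the fact that $\kl^{*M}$ joins in $L_j$ for $j\notin\sigma$ to reduce to an edge non-face $\{i,j\}$.
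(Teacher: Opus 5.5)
\textbf{Gap 1: the combinatorial step.} This is the step you flagged, and it fails: a disconnected full subcomplex, or a non-edge $\{i,j\}\notin M$, need not exist. Take $M=\partial\Delta^2$ on $\{1,2,3\}$ and $i=1$. Let $(K_1,L_1)=(\Delta^1,\{u\})$ with $\Delta^1$ on $\{u,v\}$, and $(K_j,L_j)=(\{w_j\},\emptyset)$ for $j=2,3$. The hypotheses hold, since $\{2,3\}\in M\setminus 1$ but $\{2,3\}\notin\mathrm{lk}_M(1)$. Yet $\kl^{*M}$ is the cone $\{u\}*\partial\Delta^2$ on the triangle boundary with vertices $v,w_2,w_3$. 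Every two vertices span an edge, so no full subcomplex yields $X_a\vee X_b$, and $\ux^{\kl^{*M}}=X_u\times T(X_v,X_{w_2},X_{w_3})$ with $T$ the fat wedge.

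So minimal missing faces $\tau\cup\{i\}$ of dimension $\geq 2$ cannot be avoided. The missing idea is that you do not need disconnectedness. The full subcomplex $\partial\Delta^k$ you already built suffices: by Porter's theorem the homotopy fibre of $\ux^{\partial\Delta^k}\to\prod_j X_j$ is $\Sigma^k\Omega X_0\wedge\cdots\wedge\Omega X_k$, and it splits off after looping. This is exactly the second case of Lemma \ref{lem: rectractG}. If $K_j\neq L_j$ but every vertex of $K_j$ lies in $L_j$, there is no vertex $w_j$ to pick. Use a minimal face $\rho_j\in K_j\setminus L_j$ instead; the full subcomplex on $\{v\}\cup\bigcup_j V(\rho_j)$ is still a simplex boundary.

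Your suggested fix for $K_j=L_j$, shrinking $\tau$, cannot work in general, because the conclusion itself can fail there. Let $M$ be two disjoint vertices, $(K_1,L_1)=(\Delta^1,\{a\})$ with $\Delta^1$ on $\{a,a'\}$, and $(K_2,L_2)=(\{b\},\{b\})$. Then the polyhedral join is a $2$-simplex, and $\ux^{\kl^{*M}}=X_a\times X_{a'}\times X_b$ is elliptic for $X=S^2$. A nondegeneracy condition is therefore needed. The paper's argument tacitly assumes $\kl^{*\mathrm{lk}_M(i)}\subsetneq\kl^{*M\setminus i}$ when it applies Lemma \ref{lem: rectractG}.

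\textbf{Gap 2: the homotopy-theoretic last step.} The formula you give for $\Omega(X_a\vee X_b)$ is not correct as written; the fibre of $X_a\vee X_b\to X_a$ is $X_b\rtimes\Omega X_a$. The general statement is $\Omega(X_a\vee X_b)\simeq\Omega X_a\times\Omega X_b\times\Omega\Sigma(\Omega X_a\wedge\Omega X_b)$.

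More seriously, bottom cells do not split off $p$-locally merely because $p$ is not a torsion prime. $\mathbb{C}P^2$ is torsion-free and satisfies the hypotheses of Theorem \ref{thm: hyperboliclocalised}, but $S^2$ is a retract of $\mathbb{C}P^2_{(p)}$ for no $p$: a retraction would kill the nonzero cup square.

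What actually drives the large-prime conclusions is rational ellipticity of the $X_j$, which your proposal never uses. By Theorem \ref{thm : loop decomp}, for all but finitely many $p$ each $\Sigma\Omega X_j$ is $p$-locally a wedge of spheres. Hence the Porter fibre above is $p$-locally a wedge of infinitely many simply-connected spheres; there are infinitely many because $H_*(\Omega X_j;\mathbb{Q})$ is infinite-dimensional.

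\textbf{Comparison with the paper.} With these repairs and the nondegeneracy assumption, your route is a valid and shorter alternative: retract the fat wedge attached to a minimal missing face through $i$. The paper instead uses the pushout of Corollary \ref{pjppushout} and isolates the factor $\Omega(\Sigma G\wedge\Omega H)$ of Corollary \ref{cor: uxloopspace}. It then shows a wedge of at least two spheres retracts off that factor, using $X_i$ as a retract of $H$ and either some $\Omega X_j$ or a Porter fibre as a retract of $G$. If you replace $v$ by a minimal face of $K_i$ outside $L_i$, your route would not even need $L_i$ to be full.
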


This paper is structured as follows. Section 2 covers all preliminary results, and is split into two parts. Part one gives a loop space decomposition of polyhedral products associated with specific simplicial complexes, and part two surveys known results concerning Moore's conjecture. Section 3 determines when polyhedral products of the form $\caa^K$ satisfy Moore's conjecture and proves criteria for when polyhedral products of the form $\ux^K$ are rationally hyperbolic, mod-$p^r$ hyperbolic for all but finitely many primes, and have infinite homotopy exponents. Section 4 applies these results to the polyhedral join product.

The author would like to thank both Stephen Theriault and Lewis Stanton for many helpful suggestions that improved this work.

\section{Preliminary results}

\subsection{Loop spaces of polyhedral products}

The aim of this subsection is to give a homotopy decomposition for $\uxa^K$ under certain conditions. We begin with some essential results concerning polyhedral products. 

 Let $K$ be a simplicial complex on $[m]$, and let $L$ be a proper subcomplex of $K$. Assume that $L$ is on the vertex set $[l]$ for $l < m$. We can view $L$ as a simplicial complex on the vertex set $[m]$ by considering $l+1 , \dots, m$ as \emph{ghost vertices}. The simplicial inclusion $L \to K$ induces a map of polyhedral products 
\[\uxa^L \times \prod_{i=l+1}^m A_i \to \uxa^K.\]

If $L$ is a full subcomplex with no ghost vertices, we obtain a left inverse for this map. 

\begin{lem}\label{lem: left inverse}\cite[Lemma 2.2.3]{Denham_2007}
    Let $K$ be a simplicial complex on $[m]$, and let $L$ be a full subcomplex of $K$. Then the induced map of polyhedral products $\uxa^L \to \uxa^K$ has a left inverse \[r: \uxa^K \to \uxa^L. \eqno \qed\]
\end{lem}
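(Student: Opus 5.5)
The plan is to write down the retraction $r$ explicitly as the restriction of a coordinate projection followed by a coordinatewise map that kills the coordinates outside $L$. Let $L$ be the full subcomplex of $K$ on the vertex set $V \subseteq [m]$, so that $L = \{\sigma \in K \mid \sigma \subseteq V\}$. Regard $\uxa^L$ as a subspace of $\prod_{i\in V} X_i$; it is the polyhedral product on $L$ with vertex set $V$. The inclusion $\uxa^L \to \uxa^K$ is then the map $(x_i)_{i \in V} \mapsto (y_i)_{i\in[m]}$, where $y_i = x_i$ for $i \in V$ and $y_i = \ast_i$ (the basepoint of $A_i$) for $i \notin V$. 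This lands in $\uxa^K$ because a point of $\uxa^{\sigma}$ with $\sigma \in L$ is sent into $\uxa^{\sigma} \subseteq \uxa^K$, with $\sigma$ now viewed as a face of $K$.

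First I define $r$ as the restriction of the coordinate projection $\pi_V \colon \prod_{i=1}^m X_i \to \prod_{i \in V} X_i$. The key step is to check that $\pi_V(\uxa^K) \subseteq \uxa^L$, and I expect this to be the only point needing care. It suffices to check each face: take $\sigma \in K$. Then $\pi_V(\uxa^{\sigma}) = \prod_{i\in V} Y_i$, where $Y_i = X_i$ for $i \in \sigma\cap V$ and $Y_i = A_i$ otherwise. So $\pi_V(\uxa^{\sigma})$ equals $\uxa^{\sigma \cap V}$, computed in $\prod_{i\in V}X_i$. Since $\sigma \cap V \subseteq \sigma$ is a face of $K$ contained in $V$, fullness of $L$ gives $\sigma \cap V \in L$. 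Hence $\pi_V(\uxa^{\sigma}) \subseteq \uxa^L$. This is exactly where fullness is used: for a non-full subcomplex, $\sigma \cap V$ could be a face of $K$ on $V$ that is missing from $L$.

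Finally I check that $r$ is a left inverse. The composite $\uxa^L \to \uxa^K \to \uxa^L$ sends $(x_i)_{i\in V}$ to $(x_i)_{i \in V}$, since the inserted basepoint coordinates are discarded by $\pi_V$. So the composite is the identity. Continuity is automatic, because $r$ is the restriction of a projection of products to subspaces. This gives the required left inverse $r$, and it is even a strict retraction, not just a homotopy one.
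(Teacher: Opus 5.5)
Your proof is correct and follows the same approach as the paper. The paper cites Denham--Suciu and notes right after the lemma that the map is the coordinate projection $\prod_{i=1}^m X_i \to \prod_{i\in V} X_i$ restricted to polyhedral products; you use that projection, invoke fullness exactly where it is needed (to get $\sigma\cap V\in L$ for every $\sigma\in K$), and observe that composing with the basepoint inclusion gives the identity on the nose.
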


There is a projection map $\prod_{i=1}^m X_i \to \prod_{j \in V(L)} X_j$ that restricts to a map of polyhedral products. Naturality of the projection map implies the following two lemmas. 

\begin{lem}\label{lem: retractioneq}
     Let $K$ be a simplicial complex on $[m]$, and let $L$ be a full subcomplex of $K$. Then there is a commutative diagram 
     \[\begin{tikzcd}
         \uxa^L \times \prod_{i = l+1}^m A_i \arrow[r] \arrow[d,"\pi_1"] 
         & \uxa^K \arrow[d, "r"] \\
         \uxa^L \arrow[r,"="]
         & \uxa^L
     \end{tikzcd}\]
     where $r$ is the map in Lemma \ref{lem: left inverse} and $\pi_1$ is the projection onto the first factor. \qed
\end{lem}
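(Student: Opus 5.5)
We have to check that the square in Lemma \ref{lem: retractioneq} commutes, i.e.\ that $r$ composed with the map induced by $L \to K$ (with ghost vertices $l+1,\dots,m$) is $\pi_1$. The idea is to use an explicit model for $r$: it is the restriction of the coordinate projection $\prod_{i=1}^m X_i \to \prod_{j\in V(L)} X_j = \prod_{j=1}^{l} X_j$ to $\uxa^K$. Everything then reduces to statements about subsets of products, which can be checked face by face.

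First I check that the projection really maps $\uxa^K$ into $\uxa^L$. Take $\sigma \in K$. The projection sends $\uxa^\sigma$ onto $\prod_{j=1}^l Y_j$, where $Y_j = X_j$ for $j \in \sigma \cap [l]$ and $Y_j = A_j$ otherwise. This product is $\uxa^{\sigma\cap[l]}$ computed over $[l]$. Since $L$ is a full subcomplex on $[l]$, we have $\sigma\cap[l] \in L$. So the image lies in $\uxa^L$. Taking the union over $\sigma \in K$ shows that the projection restricts to a map $r\colon \uxa^K \to \uxa^L$. This map is the left inverse of Lemma \ref{lem: left inverse}, because restricted to $\uxa^L \subseteq \uxa^K$ (all coordinates $l+1,\dots,m$ at the basepoint, i.e.\ in $A_i$) it is the identity on the first $l$ coordinates.

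Next I describe the top map. The inclusion $L \to K$, with $L$ viewed on $[m]$, sends $\uxa^L \times \prod_{i=l+1}^m A_i$ into $\uxa^K$ as a subspace of $\prod_{i=1}^m X_i$. Explicitly, for each $\tau \in L$ the piece $\uxa^\tau \times \prod_{i>l} A_i$ is precisely $\uxa^{\tau}$ formed over $[m]$, and $\tau \in K$. So the top map is just the inclusion of subsets. Composing with $r$ forgets the last $m-l$ coordinates, which is exactly $\pi_1$. Hence the diagram commutes strictly, not merely up to homotopy.

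The only point needing care is the identification of $r$. Lemma \ref{lem: left inverse} only asserts that some left inverse exists. I would therefore state that the $r$ of that lemma is taken to be (as in \cite{Denham_2007}) this restricted projection. Once that is fixed, fullness of $L$ is used exactly once, to guarantee $\sigma\cap[l]\in L$, and the rest is bookkeeping of coordinates.
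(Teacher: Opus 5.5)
Your proposal is correct and takes the same approach as the paper: the paper also takes $r$ to be the restriction of the coordinate projection $\prod_{i=1}^m X_i \to \prod_{j\in V(L)} X_j$, and simply asserts that the square commutes by naturality of this projection. Your face-by-face check supplies the bookkeeping the paper leaves implicit: fullness gives $\sigma\cap[l]\in L$, and the top map is an inclusion of subspaces of $\prod_{i=1}^m X_i$.
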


\begin{lem}\label{lem: retraction diagram}
    Let $K$ and $K'$ be simplicial complexes on the vertex set $[m]$ and suppose there is an inclusion $K \to K'$. Suppose $L$ and $L'$ are full subcomplexes of $K$ and $K'$ on the same vertex set. Then there is a commutative diagram 
    \[\begin{tikzcd}
        \uxa^K \arrow[r] \arrow[d,"r"] 
        & \uxa^{K'} \arrow[d,"r'"] \\
        \uxa^L \arrow[r]
        & \uxa^{L'}
    \end{tikzcd}\]
    where $r$ and $r'$ are the projections in Lemma \ref{lem: left inverse}. \qed
\end{lem}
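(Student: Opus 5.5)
The plan is to prove the commutativity directly at the level of the ambient products, exploiting the fact that every map in the square is a restriction of a coordinate projection or an inclusion of products. First, make $r$ and $r'$ explicit. As in the discussion preceding Lemma \ref{lem: retractioneq}, write $V = V(L) = V(L')$ for the common vertex set. Let
\[p_V: \prod_{i=1}^m X_i \to \prod_{j\in V} X_j\]
be the coordinate projection. We take $r$ and $r'$ to be the restrictions of $p_V$ to $\uxa^K$ and $\uxa^{K'}$ respectively. This is the construction behind Lemma \ref{lem: left inverse}.

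Next, check that these restrictions land in the correct polyhedral products. For $\sigma \in K$, the projection sends $\uxa^{\sigma}$ onto $\prod_{j\in V} Y_j$, where $Y_j = X_j$ for $j \in \sigma\cap V$ and $Y_j = A_j$ otherwise. This product is $\uxa^{\sigma\cap V}$. Since $L$ is full in $K$, we have $\sigma \cap V \in L$, so $p_V(\uxa^K) \subseteq \uxa^L$. The same argument with $L'$ full in $K'$ gives $p_V(\uxa^{K'}) \subseteq \uxa^{L'}$.

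Now the commutativity. The horizontal maps are induced by the simplicial inclusions $K \to K'$ and $L \to L'$. Since $L$ and $L'$ are full on the same vertex set $V$ and $K\subseteq K'$, we get $L = K|_V \subseteq K'|_V = L'$, so the bottom map is the inclusion induced by $L \subseteq L'$. Both horizontal maps are therefore restrictions of the identity of $\prod_{i=1}^m X_i$ and of $\prod_{j\in V} X_j$, respectively. The two composites around the square are then restrictions of
\[p_V \circ \mathrm{id} = \mathrm{id}\circ p_V\]
to $\uxa^K$, so they agree on the nose, and the square commutes strictly.

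The only point needing care is confirming that the retraction of Lemma \ref{lem: left inverse} is genuinely the restricted coordinate projection, rather than some other left inverse. This is exactly what the surrounding text asserts, with the diagram arising from naturality of the projection map. The remaining verifications are routine set-level checks.
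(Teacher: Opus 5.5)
Your proposal is correct and follows the paper's approach. The paper takes $r$ and $r'$ to be restrictions of the coordinate projection $\prod_{i=1}^m X_i \to \prod_{j\in V(L)} X_j$ and gets the square from naturality of that projection. Your argument writes this out explicitly, including the fullness checks that the projections land in $\uxa^L$ and $\uxa^{L'}$ and that $L = K|_V \subseteq K'|_V = L'$, so the square commutes strictly.
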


\begin{prop} \label{funda2}\cite[Proposition 3.1]{grbic2011homotopy}
Let $K$ be a simplicial complex on the vertex set $[m]$, and suppose $K$ is a pushout of $K_1$ and $K_2$ over $L$. Then, by regarding $L$, $K_1$, $K_2$ and $K$ as simplicial complexes on the vertex set $[m]$, denoted $\overline{L}, \overline{K_1}$ and $\overline{K_2}$ respectively, there is a pushout of polyhedral products 

\[\begin{tikzcd}
\uxa^{\overline{L}} \arrow[d] \arrow[r]
& \uxa^{\overline{K_2}} \arrow[d] \\
\uxa^{\overline{K_1}} \arrow[r]
&\uxa^K. 
\end{tikzcd} \eqno \] \qed 
\end{prop}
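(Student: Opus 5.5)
The plan is to identify the pushout of polyhedral products with a colimit of topological spaces, using the explicit description $\uxa^K = \bigcup_{\sigma \in K} \uxa^{\sigma}$ inside $\prod_{i=1}^m X_i$. All four of $\uxa^{\overline{L}}$, $\uxa^{\overline{K_1}}$, $\uxa^{\overline{K_2}}$ and $\uxa^K$ are subspaces of the same ambient product $\prod_{i=1}^m X_i$. Regarding each complex on $[m]$ means the missing vertices contribute the factor $A_i$, which is what makes these subspaces rather than products with extra factors. Every map in the square is then an inclusion of subspaces induced by a simplicial inclusion. So it suffices to prove two set-theoretic identities and then check the topology.

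The identities are
\[\uxa^{K} = \uxa^{\overline{K_1}} \cup \uxa^{\overline{K_2}}\quad\text{and}\quad \uxa^{\overline{L}} = \uxa^{\overline{K_1}} \cap \uxa^{\overline{K_2}}.\]
The union follows directly from $K = K_1 \cup K_2$: every simplex of $K$ lies in $K_1$ or $K_2$, and the polyhedral product is the union over simplices. For the intersection, I would first note that $\uxa^{\sigma}\cap\uxa^{\tau} = \uxa^{\sigma\cap\tau}$. This holds coordinatewise because $A_i \subseteq X_i$, so the $i$th factor of the intersection is $X_i$ exactly when $i \in \sigma\cap\tau$ and $A_i$ otherwise. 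Hence
\[\uxa^{\overline{K_1}} \cap \uxa^{\overline{K_2}} = \bigcup_{\sigma\in K_1,\ \tau\in K_2}\uxa^{\sigma\cap\tau}.\]
Since $K_1\cap K_2 = L$, each $\sigma\cap\tau$ is a face of both $K_1$ and $K_2$, so it lies in $L$; conversely each simplex of $L$ arises this way. This gives $\uxa^{\overline{L}}$.

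The remaining step, which I expect to be the main (though mild) obstacle, is to upgrade the union-and-intersection description to a pushout in spaces. The pieces $\uxa^{\overline{K_j}}$ are finite unions of products of CW-pairs, so they are subcomplexes of the product CW-structure on $\prod X_i$. I would use the fact that a CW-complex that is the union of two subcomplexes is the pushout of them over their intersection. One technical point is that the product of CW-complexes should be taken with the compactly generated topology; I would work in that category, where the statement is standard. Because the inclusions are cofibrations, the square is also a homotopy pushout, which is the form in which it will be used.
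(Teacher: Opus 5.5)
Your proposal is correct and is the standard argument for this result. The paper gives no proof of its own and cites Grbi\'c--Theriault, whose argument is the same: view all four polyhedral products inside $\prod_{i=1}^m X_i$, verify $\uxa^K=\uxa^{\overline{K_1}}\cup\uxa^{\overline{K_2}}$ and $\uxa^{\overline{K_1}}\cap\uxa^{\overline{K_2}}=\uxa^{\overline{L}}$ via $\uxa^{\sigma}\cap\uxa^{\tau}=\uxa^{\sigma\cap\tau}$ and $K_1\cap K_2=L$, then pass to a topological pushout.

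You are right to read the hypothesis as $L=K_1\cap K_2$. The last step needs only that each $\uxa^{\overline{K_j}}$ is closed, since a space covered by two closed subspaces is their pushout over the intersection; the CW structure on the product is therefore a convenience rather than a necessity.
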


The final tool we need to determine a loop space homotopy equivalence for polyhedral products is Mather's Cube Lemma.
\begin{lem}\label{cube}\cite[Theorem 25]{Cube}
Suppose there is a homotopy pushout 
\[\begin{tikzcd}
    A \arrow[d] \arrow[r]
    & B \arrow[d] \\
    C \arrow[r] 
    & D
\end{tikzcd}\]
and a map $h: D \to Z$. Let $E, F, G$ and $H$ be the respective homotopy fibres of the composite of the inclusion of $A, B, C$ or $D$ into $D$ with $h$. Then there is a homotopy commutative cube 
\[
    \begin{tikzcd}[row sep=1.5em, column sep = 1.5em]
    E \arrow[rr] \arrow[dr] \arrow[dd] &&
    F \arrow[dd] \arrow[dr] \\
    & G \arrow[rr] \arrow[dd]&&
    H \arrow[dd] \\ 
    A \arrow[rr,] \arrow[dr] && B \arrow[dr] \\
    & C \arrow[rr] && D
    \end{tikzcd}
\]
where the four sides are homotopy pullbacks and the top face is a homotopy pushout.  \qed
\end{lem}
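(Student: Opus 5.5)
The approach is to rigidify everything so that the homotopy statements become strict ones, and then use that pulling back along a fibration interacts well with pushouts. First, replace the given homotopy pushout by a strict one built from closed cofibrations. Replace $A \to B$ and $A \to C$ by their mapping cylinders, so that we may assume both maps are closed cofibrations. Then replace $D$ by the double mapping cylinder $D' = B \cup_A C$. The homotopy pushout hypothesis gives a homotopy equivalence $D' \simeq D$, compatible with the maps from $A$, $B$ and $C$ up to homotopy. Composing with $h$ gives a map $h' : D' \to Z$. Since homotopy fibres are invariant under homotopy equivalence over $Z$, it suffices to prove the lemma for the strict square with $D'$ and $h'$.

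Next, realise the homotopy fibres as strict pullbacks of a single fibration. Let $\pi : PZ \to Z$ be the path fibration and set $H = D' \times_Z PZ$, the pullback of $\pi$ along $h'$. Similarly define $E$, $F$ and $G$ as the pullbacks of $\pi$ along the composites of $h'$ with the inclusions of $A$, $B$ and $C$. These are models for the respective homotopy fibres. By construction $E = A \times_{D'} H$, and likewise for $F$ and $G$. So the four vertical faces of the resulting strictly commutative cube are strict pullbacks along the fibration $H \to D'$, itself a pullback of $\pi$. Strict pullbacks along a fibration are homotopy pullbacks, which handles the sides.

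For the top face, work in compactly generated spaces. There, the pullback functor $-\times_{D'} H$ preserves colimits, because the category is locally cartesian closed. Hence $H \cong F \cup_E G$ as a strict pushout. It remains to show that $E \to F$ and $E \to G$ are closed cofibrations, so that this strict pushout is a homotopy pushout. This is the step I expect to be the main obstacle. The required fact is Str\o m's theorem, as used by Mather: the pullback of a closed cofibration $A \hookrightarrow B$ along a Hurewicz fibration $F \to B$ is again a closed cofibration $E \hookrightarrow F$. One proves it by lifting the halo/retraction data (a Str\o m structure $(u,\phi)$ on $(B,A)$) to $F$ using the homotopy lifting property of the fibration. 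If this technical step is problematic, the fallback is to use that $H \to D'$ is locally homotopy trivial over the cylinder coordinates and glue homotopy equivalences using the gluing lemma for double mapping cylinders.

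Finally, transport back along $D' \simeq D$. The cube over $D'$ maps by levelwise homotopy equivalences to the cube over $D$ with its homotopy fibres. Homotopy pullback and homotopy pushout properties are invariant under such equivalences, so the resulting cube over $D$ is homotopy commutative with homotopy pullback sides and a homotopy pushout top.
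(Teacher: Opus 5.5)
\textbf{Verdict: correct, by a different route.} Your argument is correct in outline. The paper gives no argument of its own. It cites Mather's Theorem~25, which applies to arbitrary homotopy commutative cubes whose bottom is a homotopy pushout and whose sides are homotopy pullbacks, with no common base required. That theorem is phrased with Mather's coherent notion of homotopy commutative cube, so the paper then appeals to Lemma~3.1 of Panov--Theriault to see that taking homotopy fibres over a common $Z$ produces such a cube.

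You instead use the common base to rigidify everything. With $D'=M_f\cup_A M_g$, and all four fibres realised as strict pullbacks of the single path fibration $PZ\to Z$, the cube commutes on the nose and the sides are strict pullbacks along Hurewicz fibrations. The content of Mather's theorem then reduces to two point-set facts: $H\cong F\cup_E G$, and Str\o m's theorem that $E\hookrightarrow F$ is a closed cofibration. In effect you prove the pushout case of Puppe's theorem.

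The citation is shorter and rests on a more general result. Your proof is self-contained and never meets the coherence issue for which Panov--Theriault is invoked. It also produces the top face with the fibre-induced maps as its edges, which is exactly the form used in Lemma~\ref{lem: mapsforfibre}.

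Three repairs are needed.
\begin{itemize}
\item Drop the appeal to local cartesian closedness. It is a delicate point for the usual convenient categories, and you do not need it. $M_f$ and $M_g$ are closed in $D'$, cover it, and meet in $A$. Hence $F$ and $G$ are closed in $H$, cover it, and meet in $E$. A finite closed cover determines the topology, so $H\cong F\cup_E G$ already in $\mathrm{Top}$.
\item Your sketch of Str\o m's theorem does not work as written. Lifting the Str\o m deformation of $(M_f,A)$ by homotopy lifting gives a deformation of $F$, but it need not be stationary on $E$; that would require a regular lifting function. Securing this is the real content of the theorem, so cite it rather than sketch it. Note also that only one cofibration leg is needed.
\item Your last step speaks of mapping to ``the cube over $D$'', but no strictly commuting cube over $D$ exists in advance, since the original square only commutes up to homotopy. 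Instead, take the strict cube over $D'$ itself as the cube in the statement. Its top vertices model $E,F,G,H$ by invariance of homotopy fibres under $M_f\simeq B$, $M_g\simeq C$ and $D'\simeq D$. Composing its bottom face with these equivalences gives faces over $A,B,C,D$ that commute up to homotopy.
\end{itemize}
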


Mather proved a more general result that did not require the vertical maps to be obtained by taking fibres over a common base space. For this more general result to hold, he used a different definition of a ``homotopy commutative cube". However, by \cite[Lemma 3.1]{Panov_2018} the hypothesis on the vertical maps allows us to establish Lemma \ref{cube}. 

Suppose there exists a simplicial complex that can be described as the pushout 
\begin{equation}\label{pushout:gen}
    \begin{tikzcd}
    N*L \arrow[r] \arrow[d] 
    & N*K\arrow[d] \\
    M*L \arrow[r] 
    & \overline{M}
\end{tikzcd}\end{equation}
where $\overline{M}$ is on the vertex set $[m]$, $L \subseteq K$, $N \subseteq M$, the simplicial complexes $K$ and $L$ are on the vertex set $\{1,\dots, k\}$, and the simplicial complexes $M$ and $N$ are on the vertex set $\{k+1,\dots, m\}$. The subcomplexes $L$ and $N$ may contain ghost vertices. Note that both $M$ and $K$ are full subcomplexes of $\overline{M}$. 

By Proposition \ref{funda2}, we obtain the homotopy pushout 
\[\begin{tikzcd}
    \uxa^{L} \times \uxa^N  \arrow[r, "i \times 1"] \arrow[d,"1 \times f"] 
    &   \uxa^{K} \times \uxa^N  \arrow[d] \\
    \uxa^{L} \times \uxa^{M} \arrow[r]
    & \uxa^{\overline{M}}
& \end{tikzcd}\]

We have the following commutative diagram
\begin{equation}\label{diagram1}
    \begin{tikzcd}
    \prod_{i \in L} A_i  \times \uxa^M \arrow[d,"j \times 1"] \arrow[r]
    & \uxa^{\overline{M}} \arrow[d, "="] \\
    \uxa^L\times \uxa^{M} \arrow[r] \arrow[d, "\pi_2"]
    & \uxa^{\overline{M}} \arrow[d, "r"]\\
    \uxa^M \arrow[r, "="]
    & \uxa^M
\end{tikzcd}\end{equation}
where the top square commutes by naturality of the inclusion and the lower square commutes by Lemma \ref{lem: retractioneq}.

Furthermore, as $r$ is defined by projecting away from the vertices contained in $L$, by Lemma \ref{lem: retraction diagram} we have a commutative diagram 

\begin{equation}\label{diagram2}
    \begin{tikzcd}
   \uxa^{K} \times \uxa^{N} \arrow[r] \arrow[d,"\pi_2"]
   & \uxa^{\overline{M}} \arrow[d,"r"]\\
   \uxa^N \arrow[r]
   & \uxa^M.
\end{tikzcd}\end{equation}

We now apply Mather's cube Lemma to the homotopy pushout (\ref{pushout:gen}) by composing all four corners with $r$ and taking homotopy fibres. Define $F$ as the homotopy fibre of the following fibration 

\[F \to \uxa^{\overline{M}} \xrightarrow{r} \uxa^M.\]

Define $G$ as the homotopy fibre of the following fibration.
\[G \to \uxa^N \to \uxa^M.\]

By \eqref{diagram1} and \eqref{diagram2}, we obtain homotopy fibrations 
\[\uxa^L \to \uxa^L \times \uxa^M \to \uxa^M,\]
\[\uxa^L \times G \to \uxa^L \times \uxa^N \to \uxa^M,\]
\[\uxa^K \times G \to \uxa^K \times \uxa^N \to \uxa^M.\]

Therefore, by Lemma \ref{cube}, the top face of the cube is the homotopy pushout 
\begin{equation}\label{pushout: fibres}
    \begin{tikzcd}
    \uxa^L \times G \arrow[r, "a"] \arrow[d,"b"] 
    & \uxa^K \times G \arrow[d]\\
    \uxa^L \arrow[r]
    & F.
\end{tikzcd}\end{equation}

\begin{lem}\label{lem: mapsforfibre}
    The maps $a$ and $b$ in \eqref{pushout: fibres} can be chosen as $i \times 1$ and $\pi_1$ respectively.
\end{lem}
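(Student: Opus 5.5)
The plan is to identify the maps in the top face of the cube produced by Lemma \ref{cube} with the maps induced on homotopy fibres by the bottom face. Each vertical map $E \to A$ in the cube is the inclusion of a homotopy fibre of a composite into $\uxa^{\overline{M}} \xrightarrow{r} \uxa^M$. So the top maps are determined, up to homotopy, by the bottom maps together with the homotopies from \eqref{diagram1} and \eqref{diagram2}. It therefore suffices to write down compatible fibrations over $\uxa^M$ for the three corners other than $F$, and to check that the bottom maps $i\times 1$ and $1\times f$ are maps of fibrations over $\uxa^M$ whose restrictions to fibres are $i\times 1$ and $\pi_1$.

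For the map $a$, we take the bottom map $i\times 1\colon \uxa^L\times\uxa^N \to \uxa^K\times\uxa^N$. By \eqref{diagram2} and Lemma \ref{lem: retraction diagram}, both the source and the target map to $\uxa^M$ through $\pi_2$ followed by $\uxa^N \to \uxa^M$, and $i\times 1$ commutes strictly with $\pi_2$. The fibrations $\uxa^L\times G \to \uxa^L\times\uxa^N \to \uxa^M$ and $\uxa^K\times G\to\uxa^K\times\uxa^N\to\uxa^M$ are the products of the trivial fibration on the first factor with $G\to\uxa^N\to\uxa^M$. The induced map of fibres is therefore $i\times 1_G$, which gives $a = i\times 1$.

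For the map $b$, we take the bottom map $1\times f\colon \uxa^L\times\uxa^N\to\uxa^L\times\uxa^M$. Over $\uxa^M$ the source maps by $f\circ\pi_2$, and the target maps by $\pi_2$, by the lower square of \eqref{diagram1}. Again $1\times f$ commutes strictly over $\uxa^M$. The target fibration $\uxa^L\to\uxa^L\times\uxa^M\to\uxa^M$ has fibre $\uxa^L\times\{\ast\}$, and the source has fibre $\uxa^L\times G$. The induced map sends $(x,g)$ to $(x, f(g))$, where $f(g)$ lies in the homotopy fibre over the basepoint. Under the identification of the fibre $\uxa^L\times P\uxa^M$ with $\uxa^L$, this map is the projection $\pi_1$ up to homotopy, since $G\to \uxa^N \to \uxa^M$ factors through the contractible path space.

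The main obstacle is the step where commutativity holds only up to homotopy. The fibrations over $\uxa^M$ come from composing with $r$, while \eqref{diagram1} and \eqref{diagram2} identify these composites with $\pi_2$-type maps only up to the stated commutative diagrams. I would handle this by noting that these squares are in fact strictly commutative, since $r$ is literally a coordinate projection. Then the homotopy fibres can be formed as strict pullbacks of the path fibration, so no correcting homotopies enter and the induced maps are exactly $i\times1$ and $\pi_1$. Naturality of the cube construction in \cite[Lemma 3.1]{Panov_2018} then identifies the top face with \eqref{pushout: fibres}.
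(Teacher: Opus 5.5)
Your proposal is correct and is essentially the paper's argument. In both, the fibrations over $\uxa^M$ are recognised as products of $G\to\uxa^N\to\uxa^M$ (respectively $\ast\to\uxa^M\to\uxa^M$) with the trivial fibration $\uxa^L\to\uxa^L\to\ast$ or $\uxa^K\to\uxa^K\to\ast$, so the induced maps of fibres are $i\times 1$ and $\pi_1$. Your observation that $r$ is a strict coordinate projection, so the squares commute on the nose and no correcting homotopies enter, makes explicit a point the paper leaves implicit.
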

\begin{proof}
    The map $a$ is the induced map of fibres in the homotopy fibration diagram 
    \[\begin{tikzcd}
        \uxa^L \times G \arrow[r] \arrow[d,"a"]
        & \uxa^L \times \uxa^N \arrow[r] \arrow[d,"i \times 1"]
        & \uxa^M \arrow[d, "="] \\
         \uxa^K \times G \arrow[r] 
        & \uxa^K \times \uxa^N \arrow[r]
        & \uxa^M.
    \end{tikzcd}\]
    The top row is the product of $G \to \uxa^N \to \uxa^M$ with $\uxa^L \to \uxa^L \to *$, and the bottom row is the product of  $G \to \uxa^N \to \uxa^M$ with $\uxa^K \to \uxa^K \to *$. As the middle and right column respect products of fibrations, $a$ can be taken as $i \times 1$. 

    The map $b$ is the induced map of fibres in the homotopy fibration diagram
    \[\begin{tikzcd}
      \uxa^L \times G \arrow[r] \arrow[d,"b"] 
      & \uxa^L \times \uxa^N \arrow[d,"1 \times f"] \arrow[r]
      & \uxa^M \\
       \uxa^L  \arrow[r] 
      & \uxa^L \times \uxa^M \arrow[r]
      & \uxa^M.
    \end{tikzcd}\]
     The top row is the product of $G \to \uxa^N \to \uxa^M$ with $\uxa^L \to \uxa^L \to *$, and the bottom row is the product of  $* \to \uxa^M \to \uxa^M$ with $\uxa^L \to \uxa^L \to *$. As the middle and right column respect products of fibrations, $b$ can be taken as $\pi_1$.
\end{proof}

Combining these results, we obtain the following.

\begin{prop}\label{prop: loopspacegen}
    There is a homotopy fibration 
    \[F \to \uxa^{\overline{M}} \to \uxa^M\]
    that splits after looping to give a homotopy equivalence 
    \[\Omega \uxa^{\overline{M}} \simeq \Omega\uxa^M \times \Omega F.\] 
    The homotopy fibre $F$ is described by the homotopy pushout 
    \[\begin{tikzcd}
        \uxa^L \times G \arrow[r, "i \times 1"] \arrow[d,"\pi_1"] 
        & \uxa^K \times G \arrow[d] \\
        \uxa^L \arrow[r]
        & F. 
    \end{tikzcd}\]
\end{prop}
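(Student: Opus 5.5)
The statement has two parts: the splitting of the looped fibration, and the identification of the fibre $F$. I would treat them separately, since almost everything needed is already in place.

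For the splitting, the plan is to take $F$ to be, by definition, the homotopy fibre of the map $r\colon \uxa^{\overline{M}} \to \uxa^M$ from Lemma \ref{lem: left inverse}. This map exists because $M$ is a full subcomplex of $\overline{M}$. The same lemma says the induced map $s\colon \uxa^M \to \uxa^{\overline{M}}$ satisfies $r\circ s \simeq 1$. Looping, $\Omega r$ has the right homotopy inverse $\Omega s$. I would then form the composite
\[\Omega \uxa^M \times \Omega F \xrightarrow{\Omega s \times \Omega \iota} \Omega\uxa^{\overline{M}} \times \Omega \uxa^{\overline{M}} \xrightarrow{\mu} \Omega \uxa^{\overline{M}},\]
where $\iota\colon F \to \uxa^{\overline{M}}$ is the fibre inclusion and $\mu$ is loop multiplication. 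The standard argument comparing the long exact sequences of homotopy groups, which split because $\Omega r$ has a section, shows this composite induces an isomorphism on all homotopy groups. Whitehead's theorem then gives $\Omega \uxa^{\overline{M}} \simeq \Omega \uxa^M \times \Omega F$. This step is routine.

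For the description of $F$, I would apply Lemma \ref{cube} to the homotopy pushout of polyhedral products coming from Proposition \ref{funda2} applied to \eqref{pushout:gen}, with $h = r$. The vertical homotopy fibres have to be identified at each corner:
\begin{itemize}
\item At $\uxa^L \times \uxa^M$, the composite to $\uxa^M$ is $\pi_2$ by the lower square of \eqref{diagram1}, so the fibre is $\uxa^L$.
\item At $\uxa^K\times\uxa^N$, the composite is the projection to $\uxa^N$ followed by $\uxa^N \to \uxa^M$, by \eqref{diagram2}. Its fibre is $\uxa^K \times G$.
\item At $\uxa^L\times\uxa^N$, the map factors through the previous two corners, and by the same reasoning its fibre is $\uxa^L\times G$.
\end{itemize}
Lemma \ref{cube} then says the top face is the homotopy pushout \eqref{pushout: fibres}. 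By Lemma \ref{lem: mapsforfibre}, its two maps may be taken to be $i\times 1$ and $\pi_1$. This is exactly the square in the statement.

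The main obstacle is being careful that the composites to $\uxa^M$ really are the projections claimed, so that the fibres are honest products. This rests on the commutativity of \eqref{diagram1} and \eqref{diagram2}, i.e. on Lemmas \ref{lem: retractioneq} and \ref{lem: retraction diagram}. One also has to ensure that the induced maps of fibres are the stated ones up to homotopy compatible with the cube. That is the content of Lemma \ref{lem: mapsforfibre}, together with the remark that \cite[Lemma 3.1]{Panov_2018} justifies using Lemma \ref{cube} with fibres over a common base. Since both are established above, the proof amounts to assembling these pieces.
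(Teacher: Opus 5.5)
Your proposal is correct and follows the same route as the paper. The splitting comes from the section $s$ of $r$ furnished by Lemma \ref{lem: left inverse}; you spell out the standard loop-multiplication and Whitehead argument that the paper leaves implicit. The fibre $F$ is identified exactly as in the paper, by applying Lemma \ref{cube} with $h=r$, reading off the corner fibres from \eqref{diagram1} and \eqref{diagram2}, and invoking Lemma \ref{lem: mapsforfibre}.
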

\begin{proof}
    As $\uxa^M$ is a full subcomplex of $\uxa^{\overline{M}}$ with no ghost vertices, the map $r$ has a left homotopy inverse. Therefore the homotopy fibration splits, and we obtain 
    \[\Omega \uxa^{\overline{M}} \simeq \Omega \uxa^{M} \times \Omega F. \]
    The description of $F$ is obtained by the pushout \eqref{pushout: fibres} and Lemma \ref{lem: mapsforfibre}. 
\end{proof}

In general, it may not be possible to determine the homotopy type of $F$. However, there are some special cases where this can be determined. By \cite[Lemma 8.2.3]{buchstaber2014toric}, if the map $i$ is null-homotopic, we obtain the following result. 

\begin{lem}\label{lem: nullhomotopicfibre}
    If $\uxa^{L} \to \uxa^K$ is null-homotopic, then $F \simeq (\uxa^L * G) \vee ( \uxa^K \rtimes G). \qed$
\end{lem}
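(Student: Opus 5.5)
The plan is to feed the pushout of Proposition \ref{prop: loopspacegen} into the standard result of \cite[Lemma 8.2.3]{buchstaber2014toric}. That result describes the homotopy pushout of $A \times G \xleftarrow{} A \times G \xrightarrow{f \times 1} B \times G$ along the projection $\pi_1\colon A \times G \to A$ when $f$ is null homotopic. Take $A = \uxa^L$, $B = \uxa^K$, and $f = i$. By hypothesis $i$ is null homotopic, so $i \times 1 \simeq (\ast) \times 1$. A homotopy pushout depends only on the homotopy classes of its maps, so $F$ is homotopy equivalent to the homotopy pushout $P$ of
\[
\uxa^K \times G \xleftarrow{\ \ast \times 1\ } \uxa^L \times G \xrightarrow{\ \pi_1\ } \uxa^L .
\]

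Next I identify $P$ directly. Factor the map $\ast \times 1$ as the composite
\[
\uxa^L \times G \xrightarrow{\pi_2} G \hookrightarrow \uxa^K \times G .
\]
Then build the pushout in two stages using pasting of homotopy pushouts.

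\textbf{Stage 1.} The homotopy pushout of $\uxa^L \xleftarrow{\pi_1} \uxa^L \times G \xrightarrow{\pi_2} G$ is the join $\uxa^L * G$. In this join the inclusion of $G$ is null homotopic, because $G$ is included as the base of the cone on $G$ at a point of $\uxa^L$.

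\textbf{Stage 2.} Push out $G \to \uxa^L * G$ along $G \hookrightarrow \uxa^K \times G$. Since $G \to \uxa^L * G$ is null homotopic, this pushout is
\[
(\uxa^L * G) \vee \bigl((\uxa^K \times G)/G\bigr).
\]
The quotient $(\uxa^K \times G)/G$ is the half-smash, which is the space denoted $\uxa^K \rtimes G$ in the statement, with the factor $G$ collapsed.

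Combining the two stages gives
\[
F \simeq (\uxa^L * G) \vee (\uxa^K \rtimes G).
\]

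The main point needing care is Stage 2. Pushing out along a null homotopic map $G \to Y$ gives $Y \vee C$, where $C$ is the cofibre of $G \to \uxa^K \times G$. That map is a cofibration, being the inclusion of the axis, so the cofibre is the strict quotient. This also needs the spaces to be well pointed, which holds for CW polyhedral products and for their homotopy fibres up to homotopy equivalence. I also need to match the quotient with the paper's notation: collapsing $G$ gives the half-smash with $\uxa^K$ as the reduced factor, and I would identify that with the symbol $\uxa^K \rtimes G$ as written in the statement.
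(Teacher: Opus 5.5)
Your proposal is correct and matches the paper, which proves this by applying \cite[Lemma 8.2.3]{buchstaber2014toric} to the pushout describing $F$ in Proposition \ref{prop: loopspacegen}. Your two-stage argument is the standard proof of that cited lemma: replace $i\times 1$ by $\pi_2$ followed by the inclusion $G\to\uxa^K\times G$, obtain $\uxa^L * G$, then push out along the null homotopic map $G\to \uxa^L * G$. The quotient $(\uxa^K\times G)/(\ast\times G)$ you obtain is exactly the paper's $\uxa^K\rtimes G$ under its stated convention, so no further matching is needed.
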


The homotopy fibre $F$ can also be determined if $L$ is a full subcomplex of $K$. For this, we specialise to the pair $\uxa = \ux$. In general, the full subcomplex $L$ may contain ghost vertices. However, this is not an issue for polyhedral products on the pair $\ux$, as ghost vertices contribute trivial summands. 

For pointed spaces $A$ and $B$, the \emph{right half smash} is the quotient space $(A \times B) / \sim$ where $(*, b) \sim (*,*)$ for all $b \in B$. There is a homotopy cofibration $B \xrightarrow{i_2} A \times B \to A \rtimes B$ where $i_2$ is the inclusion into the second factor. 
\begin{lem}\label{lem: fullsubcomplexfibre}
   If $L$ is a full subcomplex of $K$, there is a homotopy equivalence  
   \[\Omega F \simeq \Omega \ux^L \times \Omega (H \rtimes G),\] where $H$ is the homotopy fibre of the map $\ux^K \to \ux^L$. 
\end{lem}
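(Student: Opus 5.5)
The plan is to analyse the pushout in Proposition \ref{prop: loopspacegen},
\[\begin{tikzcd}
    \ux^L \times G \arrow[r, "i \times 1"] \arrow[d,"\pi_1"]
    & \ux^K \times G \arrow[d] \\
    \ux^L \arrow[r]
    & F,
\end{tikzcd}\]
using the fact that when $L$ is a full subcomplex of $K$, the map $i\colon \ux^L \to \ux^K$ has a left inverse $r\colon \ux^K \to \ux^L$ (Lemma \ref{lem: left inverse}). Since we work with the pairs $\ux$, any ghost vertices of $L$ only contribute the factor $\ast$, so Lemma \ref{lem: left inverse} applies after discarding them.

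First I define a map $q\colon F \to \ux^L$. On the corner $\ux^K\times G$ it is $r\circ \pi_1$, and on the corner $\ux^L$ it is the identity. These agree on $\ux^L \times G$ because $r\circ i \circ \pi_1 = \pi_1$. So $q$ is induced by the pushout, and its composite with $\ux^L \to F$ is the identity. Thus $\ux^L$ is a retract of $F$.

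Next I identify the homotopy fibre of $q$ using Mather's Cube Lemma (Lemma \ref{cube}): pull the whole pushout back along $q$ and take homotopy fibres.
\begin{itemize}
\item Over the corner $\ux^L$ the map is the identity, so the fibre is $\ast$.
\item Over $\ux^K \times G$ the map is $r\pi_1$, whose fibre is $H \times G$, where $H$ is the fibre of $r$.
\item Over $\ux^L \times G$ the map is $\pi_1$, whose fibre is $G$.
\item The induced map $G \to H \times G$ should be the inclusion $g\mapsto (\ast,g)$. This comes from the same product-of-fibrations argument as Lemma \ref{lem: mapsforfibre}, since $i\times 1$ covers the identity of $\ux^L$ and the fibre of $\ux^L \xrightarrow{=} \ux^L$ is a point.
\end{itemize}
The top face is therefore the homotopy pushout of $\ast \leftarrow G \xrightarrow{i_2} H\times G$. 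This is the homotopy cofibre of $i_2$, which is $H \rtimes G$ by the cofibration recalled above. Hence there is a homotopy fibration
\[H \rtimes G \to F \xrightarrow{q} \ux^L.\]

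Since $q$ has a right homotopy inverse, this fibration splits after looping, giving $\Omega F \simeq \Omega \ux^L \times \Omega(H\rtimes G)$.

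The main obstacle is the cube step. One must check that the fibre maps are the claimed inclusion $G \to H\times G$ and the constant map $G \to \ast$, not just some maps up to twisting. I would handle this as in Lemma \ref{lem: mapsforfibre}: write each fibration over $\ux^L$ as a product of fibrations, namely $H\to\ux^K\to\ux^L$ (or the identity fibration) with $G \to G \to \ast$. Then the induced maps on fibres respect the product decomposition.
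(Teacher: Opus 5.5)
Your proposal is correct and follows essentially the same route as the paper. The paper also maps the pushout for $F$ to $\ux^L$: it goes through $F \to \ux^K$ and then applies the retraction $r$, which is exactly your map $q$. It then uses Mather's Cube Lemma to get the fibres $\ast$, $G$ and $H \times G$, with $G \to H\times G$ the inclusion of the $G$ factor, identifies the top-face pushout as $H \rtimes G$, and splits after looping using the section $\ux^L \to F$. Your explicit product-of-fibrations check of the fibre maps is a slightly more careful version of what the paper simply asserts.
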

\begin{proof}
    Let $K$ be the simplicial complex on the vertex set $[k]$, and let $L$ be the subcomplex on the vertex set $[l]$. The space $F$ can be described by the homotopy pushout 
    \begin{equation}
        \begin{tikzcd}
            \ux^L \times G \arrow[r, "i \times 1"] \arrow[d,"\pi_1"] 
            & \ux^K \times G \arrow[d]\\
            \ux^L  \arrow[r]
            & F.
        \end{tikzcd}
    \end{equation}
    There is a natural inclusion $\ux^L \to \ux^K$, and there is a projection map $\ux^K \times G \to \ux^K$. By naturality of the projection, we obtain a pushout map $F \to \ux^K$. As $L$ is a full subcomplex of $K$, by Lemma \ref{lem: left inverse} there is a map $r: \ux^K \to \ux^L$. 
    Mapping all four corners into $\uxa^K$ and then composing with this retraction gives the homotopy fibrations 
    \[\overline{F} \to F \to \ux^L,\]
    \[* \to \ux^L \to \ux^L,\]
    \[ H \times G \to \ux^K \times G \to \ux^L,\]
    \[G \to \ux^L \times G \to \ux^L.\]
    By Lemma \ref{cube}, we obtain the homotopy pushout 
    \begin{equation}
        \begin{tikzcd}
            G \arrow[r, "c"] \arrow[d] 
            & G \times H \arrow[d] \\
            * \arrow[r] 
            & \overline{F},
        \end{tikzcd}
    \end{equation}
    where $c$ is the inclusion of the first factor. Therefore $\overline 
    {F} \simeq H \rtimes G$. 
    The homotopy fibration $\ux^L \to F \to \ux^L$ is homotopic to the identity, and so splits after looping to give a homotopy equivalence  
    \[\Omega F \simeq \Omega \ux^L \times \Omega( H \rtimes G).\] \end{proof}

An application of Mather's Cube Lemma identifies the homotopy type of $\Omega (H \rtimes G)$, as in \cite[Theorem 2.12]{stanton2024loop}. 
\begin{lem}\label{lem: loopspacefullsubfibre}
    There is a homotopy equivalence 
    \[\Omega(H \rtimes G) \simeq \Omega H \times \Omega(\Sigma G \wedge \Omega H). \eqno \qed\] 
\end{lem}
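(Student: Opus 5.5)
We want to prove $\Omega(H \rtimes G) \simeq \Omega H \times \Omega(\Sigma G \wedge \Omega H)$. The plan is to realise $H \rtimes G$ as the total space of a homotopy fibration over $H$ whose fibre we can identify via Mather's Cube Lemma (Lemma \ref{cube}), and then split the fibration after looping.

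\textbf{Step 1: a retraction onto $H$.} The projection $H \times G \to H$ sends $\ast \times G$ to the basepoint. It therefore factors through the quotient, giving a map $q\colon H \rtimes G \to H$. The inclusion $H \to H \times G$, $h \mapsto (h,\ast)$, composed with the quotient map gives $s\colon H \to H \rtimes G$, and clearly $q \circ s = 1_H$. Let $E$ be the homotopy fibre of $q$. Since $q$ has a right homotopy inverse, the homotopy fibration $E \to H \rtimes G \xrightarrow{q} H$ splits after looping, so
\[\Omega(H \rtimes G) \simeq \Omega H \times \Omega E.\]
It then remains to show that $E \simeq \Sigma G \wedge \Omega H$.

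\textbf{Step 2: identify $E$ using the cube lemma.} Write $H \rtimes G$ as the homotopy pushout of
\[\ast \longleftarrow G \xrightarrow{i_2} H \times G,\]
which is the cofibration $G \to H \times G \to H \rtimes G$ recalled before Lemma \ref{lem: fullsubcomplexfibre}. Map each corner to $H$ via $q$. On $H \times G$ this is the projection $\pi_1$, whose fibre is $\Omega H \times G$ up to the factor identification: more precisely the fibre is $P H \times G \simeq G$ sitting over $\pi_1$, so the fibre of $H\times G \to H$ is $\Omega H$-free and equals $G$ times the fibre of $1_H$. On $G$ and on $\ast$ the composite is constant, so the fibres are $\Omega H \times G$ and $\Omega H$. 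Lemma \ref{cube} therefore gives a homotopy pushout
\[\begin{tikzcd}
\Omega H \times G \arrow[r] \arrow[d,"\pi_1"] & G \arrow[d]\\
\Omega H \arrow[r] & E,
\end{tikzcd}\]
where the top map is homotopic to the projection $\pi_2$, since the fibre of $1_H$ is contractible. The homotopy pushout of $\Omega H \xleftarrow{\pi_1} \Omega H \times G \xrightarrow{\pi_2} G$ is the join $\Omega H * G \simeq \Sigma \Omega H \wedge G \cong \Sigma G \wedge \Omega H$. This gives $E \simeq \Sigma G\wedge\Omega H$, and substituting into Step 1 yields the statement.

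\textbf{Main obstacle.} The delicate point is checking that the maps in the top face of the cube really are the two projections. As in Lemma \ref{lem: mapsforfibre}, I would handle this by writing each fibration involved as a product of fibrations. Over $G$ and over $\ast$ the fibration is $(\Omega H \to \ast \to H)$ times a trivial one. Over $H \times G$ it is $(\ast \to H \xrightarrow{=} H)$ times $(G \to G \to \ast)$. The induced maps of fibres then respect the product decompositions, which identifies them as $\pi_1$ and $\pi_2$ respectively.
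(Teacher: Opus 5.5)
Your proof is correct and follows the same route the paper indicates by citing \cite[Theorem 2.12]{stanton2024loop}. You split off $\Omega H$ using the retraction $H \rtimes G \to H$, then apply Mather's Cube Lemma to the cofibration $G \to H \times G \to H \rtimes G$ over $H$, which identifies the fibre as $\Omega H * G \simeq \Sigma G \wedge \Omega H$; the fibre maps are pinned down by product decompositions exactly as in Lemma \ref{lem: mapsforfibre}. The one thing to fix is the sentence about the fibre over $H \times G$: it first says the fibre is $\Omega H \times G$ and then corrects itself, so state cleanly that this fibre is $G$.
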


Combining Proposition \ref{prop: loopspacegen} and Lemma \ref{lem: fullsubcomplexfibre} we obtain the following result.
\begin{corollary}\label{cor: uxloopspace}
There is a homotopy equivalence 
    \[
       \Omega \ux^{\overline{M}} \simeq \Omega\ux^M \times \Omega \ux^L \times \Omega H\times  \Omega(\Sigma G \wedge \Omega H). \eqno \qed
    \]
\end{corollary}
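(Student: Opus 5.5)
This is a direct assembly of results already established, so the plan is to chain two equivalences. First, I apply Proposition \ref{prop: loopspacegen} with $\uxa = \ux$. This gives
\[\Omega \ux^{\overline{M}} \simeq \Omega \ux^M \times \Omega F,\]
where $F$ is the homotopy fibre of the retraction $r\colon \ux^{\overline{M}} \to \ux^M$.

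Next, I will use the standing hypothesis of this part of the paper, that $L$ is a full subcomplex of $K$. Possible ghost vertices in $L$ cause no trouble here, since for the pair $\ux$ they only contribute contractible factors $\ast$. With this in place, Lemma \ref{lem: fullsubcomplexfibre} gives
\[\Omega F \simeq \Omega \ux^L \times \Omega (H \rtimes G),\]
with $H$ the homotopy fibre of $\ux^K \to \ux^L$. Lemma \ref{lem: loopspacefullsubfibre} then rewrites the last factor as
\[\Omega (H\rtimes G) \simeq \Omega H \times \Omega(\Sigma G\wedge \Omega H).\]

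Substituting these two equivalences into the first one gives
\[\Omega \ux^{\overline{M}} \simeq \Omega\ux^M \times \Omega \ux^L \times \Omega H \times \Omega(\Sigma G\wedge\Omega H).\]
Each step is a homotopy equivalence of spaces; no $H$-map structure is needed, since we only claim an equivalence of underlying spaces.

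The only point needing care is checking that the hypotheses of Lemma \ref{lem: fullsubcomplexfibre} are met in the setting of the pushout \eqref{pushout:gen}. There are two things to confirm. First, the map $i\colon \ux^L \to \ux^K$ appearing in the description of $F$ in Proposition \ref{prop: loopspacegen} must be the inclusion induced by the full subcomplex $L \subseteq K$. Second, $G$ must be the same fibre of $\ux^N \to \ux^M$ in both results. Both follow from how the spaces are defined, so I expect no real obstacle.
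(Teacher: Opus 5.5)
Your proposal is correct and is essentially the paper's argument. The corollary is obtained by substituting Lemma \ref{lem: fullsubcomplexfibre}, with $\Omega(H\rtimes G)$ then rewritten via Lemma \ref{lem: loopspacefullsubfibre}, into the splitting $\Omega\ux^{\overline{M}}\simeq\Omega\ux^M\times\Omega F$ of Proposition \ref{prop: loopspacegen}, under the implicit hypothesis (which you rightly make explicit) that $L$ is a full subcomplex of $K$.
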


\subsection{Results concerning Moore's conjecture}
 We now recall Moore's conjecture, and state some classical results that are used in Section 3. 
\begin{defn}
   A simply-connected finite $CW$-complex $X$ is \emph{rationally elliptic} if it has finitely many rational homotopy groups. Otherwise it is \emph{rationally hyperbolic}.
\end{defn}
\begin{ex}
The sphere $S^n$ is rationally elliptic for $n \geq 2$ \cite{Serre}, and the wedge $S^m \vee S^n$ is rationally hyperbolic for $m,n \geq 2$ \cite{Hilton}. 
\end{ex}

\begin{defn}
    Let $p$ be a prime and $r \geq 1$. A simply-connected finite $CW$-complex $X$ is \emph{mod}-$p^r$ \emph{hyperbolic} if the number of $p^r$-torsion summands in $\oplus^m_{i=1}\pi_i(X)$ grows exponentially as $m$ increases. 
\end{defn}

\begin{ex}
    In \cite{Boyde2022}, it was shown that $S^m \vee S^n$ is mod-$p^r$ hyperbolic for all primes $p$ and all $r \geq 1$. It remains unknown if $S^n$ is mod-$p^r$ hyperbolic or not for any prime $p$ or $r \geq 1$. 
\end{ex}
\begin{defn}
    Let $p$ be a prime and $r \geq 1$. A simply-connected finite $CW$-complex $X$ has \emph{homotopy exponent} $p^r$ if $r$ is the least power of $p$ that annihilates the $p$-torsion in $\pi_*(X)$, denoted $\mathrm{exp}_p(X)=r$. 
\end{defn}

\begin{ex}
    In \cite{moore1}, \cite{Moore2}, it was shown that for $p \geq 3$, $\mathrm{exp}_p(S^{2n+1}) = n$. When $p =2$, the best known upper bound for the $2$-exponent for odd-dimensional spheres is $\mathrm{exp}_2(S^{2n+1}) \leq~\frac{3}{2} + \epsilon$, where $\epsilon \in \{0,1\}$ depending on $n$. In \cite{NeisendorferSelick1982}, it was shown that if $n,m \geq 2$, $S^n \vee S^m$ has no exponent at any prime.
\end{ex}

We now state Moore's conjecture.
\begin{conjecture}
(\textbf{Moore's Conjecture})
    Let $X$ be a finite, simply-connected CW-complex. Then the following are
equivalent:
\begin{itemize}
    \item  $X$ is rationally elliptic;
    \item $X$ has a finite homotopy exponent at every prime $p$;
    \item $X$ has a finite homotopy exponent at some prime $p$.
\end{itemize}
\end{conjecture}

 Moore's conjecture asserts that a space $X$ is rationally hyperbolic if and only if $X$ has no exponent at any prime $p$. Moore's conjecture is known to hold for spheres, wedges of spheres and mod-$p^r$ Moore spaces with $p > 2$ or $r >1$, however, the conjecture is far from resolved. Here we prove a result that is known in the literature, and include a proof for completeness. Let $\mathcal{P}$ be the collection of $H$-spaces which are homotopy equivalent to a finite-type product of spheres and loops on simply-connected spheres.
\begin{lem}\label{lem: PWspaces-moore}
    If $X$ is a finite simply-connected $CW$-complex such that $\Omega X \in \mathcal{P}$, then Moore's conjecture holds for $X$.
\end{lem}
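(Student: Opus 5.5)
Since $\Omega X \in \mathcal{P}$, we may write
\[\Omega X \simeq \prod_{\alpha} S^{n_\alpha} \times \prod_{\beta} \Omega S^{m_\beta},\]
a finite-type product with each $m_\beta \geq 2$. Because $\pi_k(X) \cong \pi_{k-1}(\Omega X)$, both the rational homotopy and the $p$-torsion of $X$ are determined by this product. The plan is to split into two cases according to whether the product is finite or infinite. We show that a finite product forces rational ellipticity together with finite exponents at every prime, and that an infinite product forces rational hyperbolicity together with no exponent at any prime. This establishes all three equivalences of Moore's conjecture simultaneously.

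\emph{Finite case.} Suppose only finitely many factors occur. Each $S^{n}$ and each $\Omega S^{m}$ has finitely many nonzero rational homotopy groups (Serre), so $\pi_*(X)\otimes\mathbb{Q}$ is finite-dimensional and $X$ is rationally elliptic. For exponents, each sphere $S^n$ with $n\geq 2$ has a finite $p$-exponent at every prime $p$: this is due to Cohen--Moore--Neisendorfer for $p$ odd, and to James for $p=2$, using the fibration $S^{2n-1}\to\Omega S^{2n}\to\Omega S^{4n-1}$ to pass from even to odd spheres. The remaining factors are harmless: $S^1$ and $S^0$ contribute no torsion, and the factors of $\Omega S^m$ have the same torsion as $S^m$ up to a shift in degree. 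A finite product of spaces with finite exponents has finite exponent, namely the maximum of the exponents. Hence $X$ has a finite exponent at every prime.

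\emph{Infinite case.} Suppose infinitely many factors occur. Since the product is of finite type, the sphere dimensions tend to infinity. Rationally, each factor $S^{n}$ or $\Omega S^{m}$ contributes at least one nonzero rational homotopy group, so $\pi_*(X)\otimes\mathbb{Q}$ is infinite-dimensional and $X$ is rationally hyperbolic. Here we use that $X$ is a finite complex, so the Félix--Halperin--Thomas dichotomy applies in this sense. For torsion, fix a prime $p$. We need $p$-torsion of arbitrarily large order in $\pi_*(X)$. Gray's result that $\pi_*(S^{2n+1})$ contains $\mathbb{Z}/p^{n}$ summands at odd primes (for $p=2$, use Selick/Gray, or the known existence of elements of $2$-order growing with $n$) provides this. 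The same holds for $\Omega S^{m}$ via the EHP fibrations for even $m$. Since sphere factors of arbitrarily large odd dimension appear, possibly after passing through the even-sphere fibration, these summands occur in $\pi_*(\Omega X)$, which is a direct sum of the homotopy groups of the factors. Hence $X$ has no exponent at $p$.

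\emph{Conclusion and main obstacle.} Combining the two cases: rationally elliptic $\Rightarrow$ finite exponent at every prime $\Rightarrow$ finite exponent at some prime $\Rightarrow$ rationally elliptic. The last implication is the contrapositive of the infinite case. The main obstacle is the torsion input in the infinite case: we need, for every prime including $p=2$, that spheres have $p$-torsion of order growing unboundedly with dimension. I would cite Gray (odd primes) and the $2$-primary results on orders of elements in $\pi_*(S^n)$ rather than reprove them, and I would handle the even spheres through James' fibration.
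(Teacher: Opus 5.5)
Your proposal is correct and follows essentially the same route as the paper. Like the paper, you split according to whether the product decomposition of $\Omega X$ has finitely or infinitely many factors. In the finite case you use James and Toda/Cohen--Moore--Neisendorfer to get finite exponents; in the infinite case you use unbounded sphere dimensions together with $p$-torsion of growing order in spheres (Gray/Cohen--Moore--Neisendorfer at odd primes, the $p$-local splitting of $\Omega S^{2n}$).

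The only soft spot is the $p=2$ step. There, passing through the EHP fibration $S^{2n-1}\to\Omega S^{2n}\to\Omega S^{4n-1}$ does not help, because it does not split. The paper avoids this by citing Davis--Mahowald: for each $r$ there are stable image-of-$J$ elements of order $2^r$ whose sphere of origin is bounded by some $N(r)$. This applies directly to every $S^m$ with $m\geq N(r)$, of either parity.
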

\begin{proof}
      We first establish some notation. Let $Y= S^n$ for $n \in \{1,3,7\}$ or $Y = \Omega S^m$ for $m \notin \{2,4,8\}$. Then any element of $\mathcal{P}$ can be written as a finite-type product $\prod_{i \in I} Y_i$ for some indexing set $I$, where each $Y_i$ is an instance of $Y$. Let $\Omega X \in \mathcal{P}$. If $X$ is rationally elliptic, then as each $Y$ has nontrivial rational homotopy, only finitely many $Y_i$ can appear in the product. Each $Y_i$ appearing in this product has a finite $p$-exponent at every prime $p$ (when $p = 2$, this is due to \cite[Corollary 1.13]{Jamessphere}, and for odd primes, this is due to \cite[Theorem 8.10]{Toda1956}). Therefore $X$ has a finite homotopy exponent at every prime $p$. 
      
      Now assume $X$ is rationally hyperbolic. Then there must be infinitely many $Y_i$ in the product decomposition of $\Omega X$. Because the product is of finite-type, the dimension of each sphere appearing in the product decomposition of $\Omega X$ must be unbounded. Fix a prime $p$. For odd primes $p$, the even-dimensional case is covered by the standard $p$-local splitting $\Omega S^{2n} \simeq_{(p)} S^{2n-1} \times \Omega S^{4n-1}$. By \cite[Corollary 1.3]{Moore2}, for odd-dimensional spheres, $\mathrm{exp}_p(S^{2n+1}) = n$. As the dimensions of the spheres appearing in the product decomposition of $\Omega X$ are unbounded, this implies that the $p$-primary homotopy exponent of $X$ is unbounded.
      Now consider $p=2$. Fix $r \geq 1$. By \cite[Theorem 1.1]{DAVIS198939}, there exists a stable element of order $2^r$ in the image of the $2$-primary stable $J$-homomorphism whose sphere of origin is bounded above by some integer $N(r)$. Hence, for every $m \geq N(r)$, some homotopy group of $S^m$ contains an element of order $2^r$. Since the dimensions of the spheres appearing in the product decomposition of $\Omega X$ are unbounded, we may choose a factor such that its homotopy group contains an element of order $2^r$. As $r$ is arbitrary, the $2$-primary homotopy exponent is unbounded. 
\end{proof}

Moore's conjecture relates rational hyperbolicity to the absence of finite homotopy exponents. However, it does not address the growth rate of the number of $\mathbb{Z}/p^r$-summands in the homotopy groups. The following was conjectured in \cite{HuangTheriault2024}. 

\begin{conjecture}\cite[Conjecture 1.6]{HuangTheriault2024}
    Let $X$ be a simply-connected finite $CW$-complex. If $X$ is rationally hyperbolic then it is mod-$p^r$ hyperbolic for all primes $p$ and $r \geq 1$. 
\end{conjecture}

We can also consider a partial version of Moore's conjecture. For a finite simply-connected $CW$-complex $X$, we say Moore's conjecture \emph{holds at a prime }$p$ if for that fixed prime $p$, $X$ has no finite $p$-homotopy exponent if and only if $X$ is rationally hyperbolic. The following two classical results are examples of this prime-local form of the conjecture.

\begin{thm}\cite[Corollary 12]{Selick_1983}\label{thm: mooretorsionfree}
    Let $p$ be an odd prime. Let $X$ have the homotopy type of a finite $CW$-complex such that $H_*(X)$ is torsion-free. If $\Sigma X$ is rationally hyperbolic, then $\Sigma X$ has no homotopy exponent at $p$. \qed
\end{thm}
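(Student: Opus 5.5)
The statement is Selick's theorem, quoted with a reference. I would prove it by reducing to a case where Moore's conjecture is already known, namely a wedge of two simply-connected spheres. For that case \cite{NeisendorferSelick1982} shows there is no exponent at any prime, as recalled in the examples above.

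\textbf{Step 1 (rational count).} Since $\Sigma X$ is a suspension, it is rationally a wedge of spheres, one for each generator of $\tilde H_*(X;\mathbb{Q})$. A single sphere is rationally elliptic, so rational hyperbolicity of $\Sigma X$ forces $\tilde H_*(X;\mathbb{Q})$ to have rank at least $2$. Because $H_*(X)$ is torsion-free, this rank is also the rank of $\tilde H_*(X;\mathbb{Z}_{(p)})$ for every prime $p$.

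\textbf{Step 2 (a $p$-local wedge of spheres inside $\Omega\Sigma X$).} Localise at the fixed odd prime $p$. By the Bott--Samelson theorem,
\[
H_*(\Omega\Sigma X;\mathbb{Z}_{(p)}) \cong T(\tilde H_*(X;\mathbb{Z}_{(p)})),
\]
a tensor algebra on a free module of rank at least $2$. Choose two generators $u$ and $v$ of degrees $a-1$ and $b-1$, and take their iterated commutators in this tensor algebra. The aim is to realise these homology classes by Samelson products of maps of spheres into $\Omega\Sigma X$.
\begin{itemize}
\item At odd primes, torsion-freeness should let us realise suitable commutators by maps $S^{k}\to\Omega\Sigma X$, possibly after passing to a sub-suspension. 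For this I would use the Cohen--Moore--Neisendorfer or Gray style splittings of $\Omega\Sigma X$, which exhibit factors of the form $\Omega\Sigma(X^{\wedge j})$.
\item Taking two such maps, together with the Hilton--Milnor theorem, should give a map $\Omega(S^{c}\vee S^{d})\to\Omega\Sigma X$ with both spheres simply connected.
\item This map should have a left homotopy inverse after looping. To build the inverse, I would project $\Omega\Sigma X$ onto the appropriate tensor factor using the splitting of the loop space of a suspension.
\end{itemize}

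\textbf{Step 3 (conclusion).} Once $\Omega(S^c\vee S^d)$ is a retract of $\Omega\Sigma X$, every homotopy group of $S^c\vee S^d$ is a summand of the corresponding homotopy group of $\Sigma X$. By \cite{NeisendorferSelick1982}, $S^c\vee S^d$ has $p$-torsion of unbounded order, so $\Sigma X$ has no exponent at $p$.

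The main obstacle is the retraction in Step 2. A torsion-free suspension need not split $p$-locally as a wedge of spheres, because the attaching maps between cells can be nontrivial $p$-torsion elements. So the retract has to be constructed at the loop level rather than at the space level.

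The first thing I would try is to take the bottom cell $S^{n}\to\Sigma X$ and its cofibre $C$. I would then use the half-smash splitting
\[
\Omega\Sigma X \simeq \Omega S^{n}\times\Omega\bigl(\text{fibre}\bigr),
\]
in the style of Lemma~\ref{lem: loopspacefullsubfibre}, where the fibre is $\Sigma(\,\cdot\,)\wedge\Omega S^n$-type, again a torsion-free suspension. Iterating this should produce a factor that is a wedge of infinitely many spheres, and a two-sphere wedge then retracts off it. The care needed is in checking that each step preserves torsion-free homology at odd $p$.
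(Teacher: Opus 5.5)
The paper gives no proof of this statement; it is quoted from \cite{Selick_1983}, so your outline has to stand on its own. Steps 1 and 3 are fine. Rational hyperbolicity of a suspension forces $\tilde H_*(X;\mathbb{Q})$ to have rank at least $2$. A $p$-local retraction of $\Omega(S^c\vee S^d)$ off $\Omega\Sigma X$, with $c,d\geq 2$, would then finish the argument by \cite{NeisendorferSelick1982}.

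The gap is Step 2. It is the entire content of Selick's theorem, and you only assert it (``should''). You never construct the left homotopy inverse. ``Project onto the appropriate tensor factor'' presupposes a product splitting of $\Omega\Sigma X$ that you do not identify: Hilton--Milnor applies only to wedges, and a torsion-free $X$ need not split $p$-locally. Also, the Bott--Samelson generators need not be spherical, so commutators of two chosen generators are not automatically Hurewicz images of Samelson products. Tellingly, nothing in your outline actually uses that $p$ is odd, although the statement is restricted to odd primes.

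Your concrete fallback fails for the same reason. Splitting off the bottom cell as $\Omega\Sigma X\simeq\Omega S^n\times\Omega(\text{fibre})$ requires $\Omega S^n$ to be a retract of $\Omega\Sigma X$, and torsion-freeness does not give this. Take $X=\mathbb{H}P^2$ and $p=3$. Then $\Sigma\mathbb{H}P^2=S^5\cup_{\nu_5}e^9$ is a torsion-free suspension, rationally $S^5\vee S^9$. Since $\nu_5$ generates $\pi_8(S^5)\cong\mathbb{Z}/24$, it is nonzero $3$-locally.
\begin{itemize}
\item Suppose $r\colon\Omega\Sigma\mathbb{H}P^2\to\Omega S^5$ were a $3$-local left inverse for the loops on the bottom-cell inclusion. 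Composing with the suspension $E\colon\mathbb{H}P^2\to\Omega\Sigma\mathbb{H}P^2$ gives a map $\mathbb{H}P^2\to\Omega S^5$ extending $E\colon S^4\to\Omega S^5$ over the top cell. The obstruction to such an extension is the adjoint of $\nu_5\neq 0$, so no such $r$ exists.
\item The same example shows that no $8$-dimensional class of $H_*(\Omega\Sigma\mathbb{H}P^2;\mathbb{Z}_{(3)})$ suspending to the top class is spherical. Every map $S^9\to\Sigma\mathbb{H}P^2$ has degree divisible by $24$ on the top cell.
\item Even when a bottom sphere does split off, say $\Sigma X\simeq S^n\vee Y$, the fibre $Y\rtimes\Omega S^n$ is a wedge of copies of $Y$ and its suspensions, not of spheres. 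So ``iterating'' just returns you to the original problem for $Y$.
\end{itemize}
What is missing is an argument that copes with cells attached by nontrivial $p$-torsion elements, as in this example. Without it, the proposal does not prove the theorem.
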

Theorem \ref{thm: mooretorsionfree} verifies Moore's conjecture for rationally hyperbolic torsion-free suspensions at odd primes. 

\begin{thm}\label{thm : loop decomp}\cite[Theorem 1]{McW}
   Let $X$ be rationally elliptic. Then for all but finitely many primes $p$, there is a homotopy equivalence 
    \[\Omega X_{(p)} \simeq \prod_{i} S^{2m_i-1}_{(p)} \times \prod_{j} \Omega S^{2n_j-1}_{(p)}. \eqno \qed\]
\end{thm}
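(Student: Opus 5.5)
The statement is McGibbon and Wilkerson's theorem, which the paper quotes with \cite[Theorem 1]{McW}. If I were to prove it rather than cite it, the plan would have three steps: build an integral map from a product of spheres and loop spaces on spheres into $\Omega X$ that is a rational equivalence, then argue that for all but finitely many primes it is also a $\mathbb{Z}_{(p)}$-homology isomorphism.

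\emph{Step 1: the rational splitting.} Since $X$ is rationally elliptic, $\pi_*(\Omega X)\otimes\mathbb{Q}$ is finite dimensional, and $\Omega X_{\mathbb{Q}}$ is an $H$-space. Hence $\Omega X_{\mathbb{Q}}$ is rationally a finite product of Eilenberg--MacLane spaces. An odd-degree factor $K(\mathbb{Q},2m-1)$ is $S^{2m-1}_{\mathbb{Q}}$, and an even-degree factor $K(\mathbb{Q},2n-2)$ is $\Omega S^{2n-1}_{\mathbb{Q}}$. This identifies the exponents $m_i$ and $n_j$ in the statement.

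\emph{Step 2: an integral map.} Choose integral representatives of a rational basis of $\pi_*(\Omega X)\otimes\mathbb{Q}$, after multiplying by integers if necessary.
\begin{itemize}
\item For an odd-degree class, take a map $S^{2m_i-1}\to\Omega X$.
\item For an even-degree class, the adjoint is a map $S^{2n_j-1}\to X$, and looping it gives $\Omega S^{2n_j-1}\to\Omega X$.
\end{itemize}
Multiplying these maps together with the loop multiplication gives
\[\varphi\colon \prod_i S^{2m_i-1}\times\prod_j \Omega S^{2n_j-1}\to \Omega X .\]
By construction $\varphi$ is a rational equivalence. Only finitely many integers were inverted in choosing representatives, so after discarding finitely many primes we may assume the chosen classes reduce to a basis modulo $p$ of the torsion-free part in the relevant finite range of degrees.

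\emph{Step 3: $\varphi$ is a $p$-local equivalence for almost all $p$.} Both source and target are simply connected $H$-spaces of finite type, so it suffices to show that $\varphi_*$ is an isomorphism on $H_*(-;\mathbb{Z}/p)$ for $p$ large.

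This is the main obstacle. Rational equivalence only controls finitely many degrees at a time, while $H_*(\Omega X;\mathbb{Z})$ is infinite, and a priori its torsion could involve infinitely many primes.

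The approach I would try first is to take $p$ large compared with $\dim X$ and use tame homotopy theory in the sense of Anick. In that range $X_{(p)}$ is modelled by a free dg Lie algebra over $\mathbb{Z}_{(p)}$, or equivalently by the Adams--Hilton model with coefficients in $\mathbb{Z}_{(p)}$. The model is defined over $\mathbb{Z}[1/N]$ for some $N$. Its rational homology is the universal enveloping algebra of a finite-dimensional elliptic homotopy Lie algebra, which has polynomial growth.

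Ellipticity gives a finite pure Sullivan model, whose structure constants involve only finitely many primes. Using this, I would show that $H_*(\Omega X;\mathbb{Z}_{(p)})$ is torsion-free for all $p$ not dividing $N$. Its Poincaré series then equals the rational one, which in turn equals that of the source of $\varphi$.

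Finally, $\varphi_*$ is injective modulo $p$ on the generators because of the choice made in Step 2. Using the Hopf algebra structure (Poincaré--Birkhoff--Witt), this injectivity propagates to all of $H_*$. Equal Poincaré series then make $\varphi_*$ a mod-$p$ isomorphism, and $\varphi_{(p)}$ is the required homotopy equivalence.
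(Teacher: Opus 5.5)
The paper does not prove this statement; it quotes it from McGibbon--Wilkerson \cite{McW}, so your sketch has to stand on its own. Steps 1 and 2 are fine, and so is the reduction at the start of Step 3. There is one slip: $\Omega X$ need not be simply connected, since $\pi_1(\Omega X)=\pi_2(X)$ and an $S^1$ factor can occur. This is harmless, because connected $H$-spaces are simple and the mod-$p$ Whitehead theorem still applies.

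The genuine gap is the claim that $H_*(\Omega X;\mathbb{Z}_{(p)})$ is torsion-free for all but finitely many $p$. This is not an intermediate lemma; it is essentially the theorem itself. The right-hand side of the statement has torsion-free $\mathbb{Z}_{(p)}$-homology, so the theorem implies the claim, and your $\varphi$ reduces the theorem to it. You write ``Using this, I would show\ldots'' but give no argument, and the ingredients you list do not address the real difficulty, which is controlling torsion in infinitely many degrees at once.

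\begin{enumerate}
\item Elliptic minimal models need not be pure. For example, $(\Lambda(a_3,b_3,c_5),\,dc=ab)$ is elliptic, has no even generators, and has nonzero differential. Ellipticity only produces an associated pure algebra, which is not a model of $X$.
\item ``Finitely many primes in the structure constants'' cannot be the mechanism. The Adams--Hilton model of \emph{any} finite complex is a finitely generated free dga over $\mathbb{Z}$ computing $H_*(\Omega X;\mathbb{Z})$. Yet Anick constructed finite complexes whose loop space homology has torsion of all orders.
\item Anick's tame Lie models do not settle it by themselves. They give $C_*(\Omega X;\mathbb{Z}_{(p)})\simeq U\mathbb{L}$ for a free dg Lie algebra $\mathbb{L}$ over $\mathbb{Z}_{(p)}$. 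But above the rational range $H(\mathbb{L})$ is pure torsion with no a priori restriction on which primes occur. Moreover, over $\mathbb{Z}_{(p)}$ the homology of $U\mathbb{L}$ is not determined by $H(\mathbb{L})$: the PBW comparison breaks down because symmetric powers do not commute with homology mod $p$.
\end{enumerate}

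Ellipticity has to enter through a genuinely integral argument valid in all degrees simultaneously, and that is exactly what is missing.

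Your final step has the same problem in smaller form. Since $\varphi$ is assembled using the loop multiplication, it is not an $H$-map; $\varphi_*$ sends ordered monomials to ordered products of images. To deduce injectivity mod $p$ from injectivity on generators, you need to know that $H_*(\Omega X;\mathbb{F}_p)$ is an enveloping algebra with a PBW basis built from the mod-$p$ Hurewicz images of your chosen classes. That is structural information about $H_*(\Omega X;\mathbb{F}_p)$ of the kind the theorem itself supplies. It does not follow from equal Poincar\'e series plus the choice made in Step 2.
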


Moore's conjecture with respect to polyhedral products has been considered. Theorem \ref{moore1} was shown in \cite{HAO_2018} for $n_i \geq 2$ and in \cite{Kim2018} for $n_i =1$, and the two theorems are combined here.

\begin{thm}\cite[Theorem 1.1]{HAO_2018}\cite[Theorem 1.1]{Kim2018}\label{moore1} Let $K$ be a simplicial complex on the vertex set $[m]$ and let $\uxa$ be any sequence
of pairs $(D^{n_i},S^{n_i-1})$ with $n_i \geq 1$ for $1 \leq i \leq m$. Then:
\begin{itemize}
    \item $\uxa^K$ is rationally elliptic if and only if the minimal missing faces of K are mutually
disjoint;
    \item Moore's conjecture holds for $\uxa^K$. \qed
\end{itemize} 
\end{thm}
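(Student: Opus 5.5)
The plan is to handle both bullet points at once by splitting on whether the minimal missing faces of $K$ are mutually disjoint. In each case I would exhibit either a product of spheres (elliptic case) or a retract containing a wedge of spheres (hyperbolic case), and then read off the exponent behaviour from the classical results recalled above.

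\emph{Disjoint case.} Suppose the minimal missing faces $\sigma_1,\dots,\sigma_s$ of $K$ are pairwise disjoint, and let $\tau$ be the set of vertices lying in none of them. Then $K$ is the join $\partial\sigma_1 * \cdots * \partial\sigma_s * \Delta^{\tau}$. Polyhedral products turn joins into products, so
\[\uxa^K \cong \prod_{j=1}^s \uxa^{\partial\sigma_j} \times \prod_{i\in\tau} D^{n_i}.\]
The standard identification
\[(\underline{D},\underline{S})^{\partial\Delta} \simeq S^{\sum_{i\in\sigma}n_i - 1}\]
then shows $\uxa^K$ is homotopy equivalent to a finite product of spheres. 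Such a space is rationally elliptic. By the results of James and Toda quoted in the proof of Lemma \ref{lem: PWspaces-moore}, it has a finite exponent at every prime. If some factor is $S^1$, I would pass to the universal cover or simply discard that factor, since higher homotopy is unaffected. This settles the ``if'' direction of the first bullet and Moore's conjecture in this case.

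\emph{Non-disjoint case.} Suppose two minimal missing faces $\sigma,\tau$ satisfy $\sigma\cap\tau\neq\emptyset$, and let $L$ be the full subcomplex of $K$ on $\sigma\cup\tau$. By Lemma \ref{lem: left inverse}, $\uxa^L$ is a retract of $\uxa^K$, so $\Omega\uxa^L$ is a retract of $\Omega\uxa^K$. It therefore suffices to show two things about $\Omega\uxa^L$:
\begin{itemize}
\item it contains $\Omega(S^a\vee S^b)$ as a retract, with $a,b\geq 2$;
\item it is rationally hyperbolic.
\end{itemize}
To get these, I would write $L$ as a pushout in the form \eqref{pushout:gen}, gluing along a vertex of $\sigma\cap\tau$ so that the missing faces $\sigma$ and $\tau$ land on different sides. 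Corollary \ref{cor: uxloopspace} and Lemma \ref{lem: nullhomotopicfibre} then produce a factor $\Omega(\Sigma G\wedge\Omega H)$ or $\Omega(\uxa^L * G)$. Here $G$ and $H$ are fibres that are themselves spheres or products of spheres, coming from the boundary-of-simplex pieces. Hence $\Sigma G\wedge\Omega H$ is a wedge of at least two simply-connected spheres.

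Once this is in place, the consequences follow from known results:
\begin{itemize}
\item rational hyperbolicity follows from Hilton--Milnor;
\item the absence of any exponent at every prime follows from Neisendorfer--Selick;
\item the first bullet's ``only if'' direction follows as well.
\end{itemize}

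\emph{Main obstacle.} The step I expect to be hardest is the explicit identification of the wedge retract. The difficulty is that the combinatorics of $L$ depend on how $\sigma$ and $\tau$ overlap, which in turn determines whether $\sigma\cup\tau$ spans further missing faces. I would try first the case where $L=\partial\sigma\cup\partial\tau$ glued along the simplex on $\sigma\cap\tau$, and reduce the general case to it by a further full-subcomplex retraction.

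The case $n_i=1$ (Kim) needs separate care, since the pairs $(D^1,S^0)$ can give non-simply-connected spaces. There I would work with the universal cover, or use the splitting $\Sigma\uxa^K\simeq\bigvee\Sigma|K_I|*\cdots$ of suspensions of full subcomplexes. That lets me reduce to detecting a wedge of two spheres of dimension at least $2$ inside a loop space factor.
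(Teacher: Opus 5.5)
The disjoint case is correct, and it is the same argument the paper itself runs in the proof of Theorem~\ref{thm: aiinw}. The gap is in the non-disjoint case. The hyperbolic direction rests entirely on showing that $\Omega(S^{a}\vee S^{b})$ is a retract of $\Omega\uxa^K$, where $a=\sum_{i\in\sigma}n_i-1$ and $b=\sum_{i\in\tau}n_i-1$. This is exactly \cite[Theorem 4.2]{HAO_2018}, which the paper invokes, and the route you sketch does not produce it, for three reasons.

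\emph{The quoted decomposition does not apply.} Corollary~\ref{cor: uxloopspace} rests on Lemma~\ref{lem: fullsubcomplexfibre}, which is only proved for pairs $(X_i,\ast)$. For $(D^{n_i},S^{n_i-1})$ the available tool is the star--link decomposition at $v\in\sigma\cap\tau$. There Proposition~\ref{prop: loopspacegen} and Lemma~\ref{lem: nullhomotopicfibre} give $\Omega\uxa^L\simeq\Omega\uxa^{L\setminus v}\times\Omega(S^{n_v-1}\ast G)$, with $G$ the fibre of $\uxa^{\mathrm{lk}_L(v)}\to\uxa^{L\setminus v}$. In this decomposition $\sigma\setminus v$ and $\tau\setminus v$ are both missing faces of $\mathrm{lk}_L(v)$, so $\sigma$ and $\tau$ do not land on different sides.

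\emph{$G$ is not a sphere or product of spheres.} This fails even in the special case you single out. Take $K$ to be three isolated vertices, $\sigma=\{1,2\}$, $\tau=\{2,3\}$, $v=2$. The map $S^{n_1-1}\times S^{n_3-1}\to\uxa^{K\setminus 2}=S^{n_1-1}\ast S^{n_3-1}$ is the null homotopic inclusion of a product into a join. Hence $G\simeq S^{n_1-1}\times S^{n_3-1}\times\Omega S^{n_1+n_3-1}$.

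\emph{The reduction to $\partial\sigma\cup_{\sigma\cap\tau}\partial\tau$ is impossible in general.} A full subcomplex in which both $\sigma$ and $\tau$ are missing faces lives on a vertex set containing $\sigma\cup\tau$. It therefore contains $K_{\sigma\cup\tau}$ as a full subcomplex, and $K_{\sigma\cup\tau}$ can have extra faces. For example, let $K$ be the edge $\{1,3\}$ together with the vertex $2$. Its only minimal missing faces are $\{1,2\}$ and $\{2,3\}$, yet $K\neq\partial\{1,2\}\cup\partial\{2,3\}$.

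The missing step can be supplied inside the star--link decomposition. The face $\sigma\setminus v$ is a minimal missing face of $\mathrm{lk}_L(v)$ but a face of $L\setminus v$. Restrict both complexes to their full subcomplexes on $\sigma\setminus v$, exactly as in the proof of Lemma~\ref{lem: rectractG}. This exhibits $\uxa^{\partial(\sigma\setminus v)}\simeq S^{a-n_v}$ as a retract of $G$, and likewise $S^{b-n_v}$ for $\tau$. The cross-composites factor through the fibre over the full subcomplex on $(\sigma\cap\tau)\setminus v$. That fibre is contractible, since this full subcomplex is a simplex in both complexes, so the cross-composites are null. Now $S^{n_v-1}\ast G\simeq\Sigma S^{n_v-1}\wedge G$ is a suspension, and all these maps become suspensions after smashing with $\Sigma S^{n_v-1}$. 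Pinching the suspension coordinate therefore assembles the two retractions into a retraction of $S^{a}\vee S^{b}$ off $S^{n_v-1}\ast G$, which is what you need.

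Finally, your universal-cover remedy for $n_i=1$ cannot work. For $K$ three isolated vertices and all $n_i=1$, $\uxa^K$ is the $1$-skeleton of a cube. Its universal cover is contractible, even though the minimal missing faces intersect. The statement must be read with the simple connectivity built into Moore's conjecture. That hypothesis forces each retract $\uxa^{\partial\sigma}\simeq S^{\sum_{i\in\sigma}n_i-1}$ to have dimension at least $2$, so no separate case is needed.
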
 

In \cite{HAO_2018}, the authors determined necessary and sufficient conditions to determine when a polyhedral product was rationally elliptic.

\begin{thm}\cite[Theorem 1.2]{HAO_2018}\label{moore}
    Let $K$ be a simplicial complex on the vertex set $[m]$ and let $\uxa$ be any sequence of
pointed, path-connected pairs. For $1 \leq i \leq m$, let $Y_i$ be the homotopy fibre of the inclusion $A_i \to X_i$
and suppose each $Y_i$ is rationally nontrivial. Then the polyhedral product $\uxa^K$ is rationally elliptic if
and only if three conditions hold:
\begin{enumerate}
    \item each $X_i$ is rationally elliptic;
    \item all the minimal missing faces of $K$ are mutually disjoint;
    \item if $v$ is a vertex of a minimal missing face of $K$ then $Y_v$ is rationally homotopy equivalent to
a sphere.
\end{enumerate} \qed
\end{thm}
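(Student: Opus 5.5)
We would follow the strategy of \cite{HAO_2018}: separate the contribution of the $X_i$ from the contribution of the combinatorics, and then treat the combinatorial part by restricting to full subcomplexes. The first step is the Denham--Suciu fibration
\[(\underline{CY},\underline{Y})^K \longrightarrow \uxa^K \longrightarrow \prod_{i=1}^m X_i,\]
where $Y_i$ is the homotopy fibre of $A_i \to X_i$. We would check that it splits after looping rationally, giving
\[\Omega \uxa^K \simeq_{\mathbb{Q}} \prod_i \Omega X_i \times \Omega (\underline{CY},\underline{Y})^K .\]
If that splitting holds, $\uxa^K$ has finitely many nonzero rational homotopy groups if and only if each $X_i$ and $(\underline{CY},\underline{Y})^K$ do. This gives condition (1) directly and reduces everything else to the pairs $(CY_i,Y_i)$. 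We would also note that $(CY_i,Y_i)^{\Delta}$ is contractible, since each $CY_i$ is.

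For sufficiency, assume the minimal missing faces $\sigma_1,\dots,\sigma_k$ are mutually disjoint. Then $K = \partial\sigma_1 * \cdots * \partial\sigma_k * \Delta^{\tau}$, where $\tau$ is the set of remaining vertices. Polyhedral products turn joins into products, so
\[(\underline{CY},\underline{Y})^K \simeq \prod_j (\underline{CY},\underline{Y})^{\partial\sigma_j}.\]
By the standard identification $(\underline{CY},\underline{Y})^{\partial\sigma} \simeq \Sigma^{|\sigma|-1} Y_{i_1}\wedge\cdots\wedge Y_{i_s}$, and condition (3), each factor is rationally a suspension of a smash of spheres, hence a rational sphere. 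The product is then rationally elliptic.

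For necessity, we use that full subcomplexes give retractions (Lemma \ref{lem: left inverse}), so rational hyperbolicity of a full subcomplex forces rational hyperbolicity of $(\underline{CY},\underline{Y})^K$. If a vertex $v$ of a minimal missing face $\sigma$ has $Y_v$ not rationally a sphere, restrict to the full subcomplex $\partial\sigma$. The retract is then a simply-connected suspension $\Sigma^{|\sigma|-1}\bigwedge Y_i$. Its reduced rational homology is $\bigotimes \tilde H_*(Y_i;\mathbb{Q})$, which is nonzero because each $Y_i$ is rationally nontrivial. A rational suspension is a wedge of spheres, and it is elliptic only when it is a single sphere. That requires each $\tilde H_*(Y_i;\mathbb{Q})$ to be one-dimensional, which we would need to upgrade to $Y_v$ being a rational sphere; the upgrade is delicate since $Y_v$ need not be a co-$H$-space. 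If instead two minimal missing faces $\sigma,\tau$ intersect, restrict to the full subcomplex on $\sigma\cup\tau$. There we would exhibit two classes in rational homotopy, coming from $\partial\sigma$ and $\partial\tau$, whose Whitehead products generate a free Lie algebra on two generators, by analogy with $S^m\vee S^n$.

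We expect the overlapping-faces case to be the main obstacle. There the homotopy type is not a simple suspension. We would first try Proposition \ref{prop: loopspacegen} and Corollary \ref{cor: uxloopspace}, writing the full subcomplex on $\sigma\cup\tau$ as a pushout over the shared vertices. The aim would be to find a factor of the form $\Omega(\Sigma G\wedge \Omega H)$ with $\Sigma G\wedge\Omega H$ rationally a wedge of at least two spheres, which would give exponential growth.
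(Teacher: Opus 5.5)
The paper gives no proof of this statement; it quotes it from \cite{HAO_2018}. Your reduction, your sufficiency argument and your treatment of condition (3) are sound. The looped Denham--Suciu fibration in fact splits integrally. Each $X_i$ is a retract of $\uxa^K$ through the full subcomplex $\{i\}$, and the product of the looped inclusions is a right inverse of $\Omega\uxa^K\to\prod_i\Omega X_i$. The ``upgrade'' you worry about in condition (3) is not delicate and has nothing to do with co-$H$-structures. A simply-connected (or nilpotent) space with one-dimensional reduced rational homology is a rational sphere by the rational Hurewicz and Whitehead theorems. Moreover, condition (3) is only ever used through $\tilde H_*(Y_v;\mathbb{Q})$.

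The genuine gap is the necessity of condition (2): when two minimal missing faces $\sigma,\tau$ intersect, you give no argument. The assertion that classes from $\partial\sigma$ and $\partial\tau$ generate a free Lie algebra is exactly what has to be proved. The tool you propose does not apply, because Corollary \ref{cor: uxloopspace} is established only for pairs $\ux$ (it rests on Lemma \ref{lem: fullsubcomplexfibre}), not for $(\underline{CY},\underline{Y})$.

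The missing ingredient is a retraction statement. This is \cite[Theorem 4.2]{HAO_2018}, which the paper itself invokes in the proof of Theorem \ref{thm: aiinw}: $\Omega\bigl((\underline{CY},\underline{Y})^{\partial\sigma}\vee(\underline{CY},\underline{Y})^{\partial\tau}\bigr)$ is a retract of $\Omega(\underline{CY},\underline{Y})^K$. A wedge of two simply-connected suspensions with nonzero rational homology is rationally a wedge of at least two spheres, hence hyperbolic.

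Your own strategy can also be completed, but with Lemma \ref{lem: nullhomotopicfibre} in place of Corollary \ref{cor: uxloopspace}.
\begin{itemize}
\item Let $J$ be the full subcomplex on $\sigma\cup\tau$, pick $v\in\sigma\cap\tau$, and apply Proposition \ref{prop: loopspacegen} to $J=\mathrm{st}_J(v)\cup_{\mathrm{lk}_J(v)}(J\setminus v)$.
\item Since $Y_v\to CY_v$ is null homotopic, Lemma \ref{lem: nullhomotopicfibre} gives $F\simeq Y_v*G$, a suspension.
\item The full subcomplexes of $\mathrm{lk}_J(v)$ and of $J\setminus v$ on $\sigma\setminus v$ are $\partial(\sigma\setminus v)$ and the simplex $\sigma\setminus v$. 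Hence the retraction argument in Lemma \ref{lem: rectractG} makes $(\underline{CY},\underline{Y})^{\partial(\sigma\setminus v)}$ a retract of $G$, and likewise $(\underline{CY},\underline{Y})^{\partial(\tau\setminus v)}$.
\item The cross-composites between these two retracts factor through the contractible $\prod_{i\in(\sigma\cap\tau)\setminus v}CY_i$. Therefore $\dim\tilde H_*(G;\mathbb{Q})\geq 2$.
\item Consequently $Y_v*G$ is rationally a wedge of at least two spheres. Its loop space retracts off $\Omega(\underline{CY},\underline{Y})^K$, which gives hyperbolicity.
\end{itemize}
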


\section{Rationally elliptic and hyperbolic type of polyhedral products}

Here we generalise Theorem \ref{moore1}, and determine when $\ux^K$ verifies Moore's conjecture. We first state some well-known results. Let $\mathcal{W}$ denote the collection of topological spaces that are homotopy equivalent to a finite-type wedge of simply-connected spheres, and recall that $\mathcal{P}$ is the collection of $H$-spaces which are homotopy equivalent to a finite-type product of spheres and loops on simply-connected spheres. For proofs of \eqref{lem: PWspaces-suspension} and \eqref{lem: PWspaces-loop} please see  \cite[Lemma 4.1]{soton467338} and \cite{Hilton}. For \eqref{lem: PWspaces-join-suspension}, note that this follows from the fact $S^n \wedge S^m \cong S^{n+m}$.

\begin{lem}\label{lem: PWspaces}
The classes $\mathcal{P}$ and $\mathcal{W}$ satisfy the following properties.
\begin{enumerate}
    \item\label{lem: PWspaces-suspension}
    If $X \in \mathcal{P}$, then $\Sigma X \in \mathcal{W}$;

    \item\label{lem: PWspaces-loop}
    If $X \in \mathcal{W}$, then $\Omega X \in \mathcal{P}$ ;

    \item\label{lem: PWspaces-join-suspension}
    If $X, Y \in \mathcal{W}$, then $X \wedge Y \in \mathcal{W}$.
\end{enumerate} \qed
\end{lem}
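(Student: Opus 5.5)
The three parts reduce to three classical facts about spheres, so the plan is to check them one at a time, taking ``finite-type'' to mean finitely many spheres in each dimension.

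\emph{Part \eqref{lem: PWspaces-suspension}.} Write $X \simeq \prod_{i\in I} Y_i$ with each $Y_i$ a sphere or a loop space on a simply-connected sphere. I would first do the case of finitely many factors. Iterating the standard splitting $\Sigma(A\times B)\simeq \Sigma A\vee\Sigma B\vee\Sigma(A\wedge B)$ gives
\[
\Sigma\Bigl(\prod_{i\in J}Y_i\Bigr)\simeq \bigvee_{\emptyset\ne S\subseteq J}\Sigma\Bigl(\bigwedge_{i\in S}Y_i\Bigr).
\]
Next, James' splitting $\Sigma\Omega S^{n}\simeq\bigvee_{k\ge1}S^{k(n-1)+1}$ shows that each $\Sigma Y_i$ is in $\mathcal{W}$. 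Because $\Sigma(A\wedge B)\simeq \Sigma A\wedge B$, each summand $\Sigma\bigwedge_{i\in S} Y_i$ can be rewritten as an iterated smash of the suspensions $\Sigma Y_i$, desuspended the appropriate number of times. Smashing wedges of spheres distributes to give wedges of spheres, and the desuspension is harmless because every sphere in sight has dimension at least $2$.

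For infinitely many factors, $X$ is the colimit of its finite subproducts. The spheres in the factors have unbounded dimension, so each skeleton only involves finitely many factors. Hence $\Sigma X$ is a telescope of wedges of spheres in which each stage is a wedge summand of the next, and it is therefore itself a wedge of spheres. Finite type follows from counting the summands in each dimension.

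\emph{Part \eqref{lem: PWspaces-loop}.} This is the Hilton--Milnor theorem. For $X\simeq\bigvee_j S^{n_j}$ with $n_j\ge2$, we have $\Omega X\simeq\prod_w \Omega S^{|w|}$, a weak product indexed by basic products $w$ in the $S^{n_j}$. Each $S^{|w|}$ is simply connected, and finite type follows from the finite type of $X$. For infinite wedges I would pass to the colimit over finite subwedges, which is compatible with the weak product.

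\emph{Part \eqref{lem: PWspaces-join-suspension}.} The smash product distributes over wedges, and $S^n\wedge S^m\cong S^{n+m}$, so
\[
X\wedge Y\simeq \bigvee_{j,k}S^{n_j+m_k}.
\]
All dimensions are at least $4$, and finite type holds because only finitely many pairs $(j,k)$ have $n_j+m_k$ equal to a given value.

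The main obstacle is the infinite-product case of part \eqref{lem: PWspaces-suspension}, specifically making the colimit argument compatible with the splittings. As a fallback I would instead use the cited \cite[Lemma 4.1]{soton467338}.
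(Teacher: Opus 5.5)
Your plan is essentially the paper's: the paper gives no argument of its own, citing \cite[Lemma 4.1]{soton467338} for part (1), Hilton's theorem \cite{Hilton} for part (2), and $S^n\wedge S^m\cong S^{n+m}$ (with smash distributing over wedges) for part (3), and your argument for part (1) — the product splitting, James' splitting, and the colimit over finite subproducts — is the standard proof of the cited fact. The one step to reword is the ``desuspension'' in part (1): an equivalence $\Sigma^{k}Z\simeq\bigvee S^{n_\alpha}$ does not in general desuspend, whatever the connectivity (e.g.\ $\Sigma(S^2\cup_{[\iota_2,\iota_2]}e^4)\simeq S^3\vee S^5$, although $S^2\cup_{[\iota_2,\iota_2]}e^4$ is not a wedge of spheres), so instead use $\Sigma(A\wedge B)\simeq\Sigma A\wedge B$ to peel off one factor at a time, $\Sigma Y_1\wedge Y_2\wedge\cdots\wedge Y_k\simeq\bigvee_\alpha S^{a_\alpha}\wedge Y_2\wedge\cdots\wedge Y_k\simeq\bigvee_\alpha S^{a_\alpha-1}\wedge\Sigma Y_2\wedge Y_3\wedge\cdots\wedge Y_k$, and induct, which only needs $a_\alpha\geq 1$.
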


\begin{thm}\label{thm: aiinw}
     Let $K$ be a simplicial complex on the vertex set $[m]$ and let $\caa$ be a sequence of pairs such that for all $i \in [m]$, $A_i$ is a finite $CW$-complex, $\Sigma A_i \in \mathcal{W}$, and $A_i$ is rationally nontrivial. Then Moore's conjecture holds for $\caa^K$. Furthermore, $\caa^K$ is rationally elliptic if and only if $K$ has mutually disjoint minimal missing faces and if the vertex $i$ lies in a minimal missing face of $K$, then $A_i$ must be rationally homotopy equivalent to a sphere. 
\end{thm}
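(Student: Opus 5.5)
Our plan is to show that $\Omega\caa^K \in \mathcal{P}$ and then apply Lemma \ref{lem: PWspaces-moore}. The ellipticity criterion comes from Theorem \ref{moore}. For the pair $(CA_i,A_i)$, the homotopy fibre $Y_i$ of $A_i \to CA_i$ is $A_i$ itself, which is rationally nontrivial by hypothesis. Also, each $X_i = CA_i$ is contractible and hence rationally elliptic. Theorem \ref{moore} therefore says that $\caa^K$ is rationally elliptic if and only if the minimal missing faces of $K$ are mutually disjoint and $A_v$ is rationally a sphere for every vertex $v$ in a minimal missing face. This is exactly the second assertion. Note that $\caa^K$ is a finite simply-connected $CW$-complex: the $A_i$ are finite, and simple connectivity follows because $\Sigma A_i\in\mathcal{W}$ forces $A_i$ to be connected.

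For Moore's conjecture, we use the suspension splitting of Bahri--Bendersky--Cohen--Gitler,
\[\Sigma \caa^K \simeq \bigvee_{I\notin K} \Sigma |K_I| * \widehat{A}^I.\]
The hope is to combine it with a loop space decomposition of $\caa^K$, rather than working only with the suspension. The cleanest route I would try is an induction on the number of vertices, using Proposition \ref{prop: loopspacegen}. Any vertex $v$ gives a pushout of the form \eqref{pushout:gen}:
\[K = \mathrm{st}(v)\cup_{\mathrm{lk}(v)} (K\setminus v), \qquad \mathrm{st}(v)=\{v\}*\mathrm{lk}(v).\]
We take the point $\{v\}$ and its boundary (with $\{v\}$ as a ghost vertex) as the pair $K,L$, and $K\setminus v \supseteq \mathrm{lk}(v)$ as the pair $M,N$. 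Proposition \ref{prop: loopspacegen} then gives
\[\Omega\caa^K\simeq \Omega\caa^{K\setminus v}\times\Omega F.\]
Here $L$ contributes the point $\ast$ (with the cone vertex) and $K$ contributes $CA_v\simeq\ast$, so the inclusion $i$ is null-homotopic. Lemma \ref{lem: nullhomotopicfibre} then yields $F\simeq A_v * G$, where $G$ is the fibre of $\caa^{\mathrm{lk}(v)}\to\caa^{K\setminus v}$. Thus $F\simeq \Sigma A_v\wedge G$. By induction, $\Omega\caa^{K\setminus v}\in\mathcal P$. It remains to show that $\Sigma A_v\wedge G\in\mathcal W$, since then Lemma \ref{lem: PWspaces}(\ref{lem: PWspaces-loop}) gives $\Omega F\in\mathcal P$.

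The main obstacle is controlling $G$, the fibre of a map between polyhedral products on fewer vertices. When $\mathrm{lk}(v)$ is a full subcomplex of $K\setminus v$, the retraction of Lemma \ref{lem: left inverse} shows that $\Omega G$ is a retract-type factor. In general, however, $G$ need not be a suspension. My approach is to use the known result (Iriye--Kishimoto / Theriault) that $\Sigma A_v \wedge G$ is a retract of $\Sigma$ of a space built from smashes of the $A_i$ and links. Concretely, I would show that $\Sigma A_v\wedge G$ is a homotopy retract of $\Sigma A_v\wedge\Omega\caa^{K\setminus v}$, or at least that it is a wedge of pieces $\Sigma^k |K_I|\wedge\widehat A^{I}$. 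The first option would follow from the fibration $G\to\caa^{\mathrm{lk}(v)}\to\caa^{K\setminus v}$ together with the fact that $\Sigma$ of a $\mathcal P$-space is in $\mathcal W$ by Lemma \ref{lem: PWspaces}(\ref{lem: PWspaces-suspension}), while $\Sigma A_v\in\mathcal W$. A retract of a space in $\mathcal W$ of finite type is again in $\mathcal W$, by a homology and Hilton--Milnor argument. Lemma \ref{lem: PWspaces}(\ref{lem: PWspaces-join-suspension}) then handles the smash products.

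Once $\Omega\caa^K\in\mathcal P$ is established, Lemma \ref{lem: PWspaces-moore} gives Moore's conjecture for $\caa^K$, which completes the argument.
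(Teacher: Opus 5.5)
Your treatment of the ellipticity criterion via Theorem~\ref{moore} matches the paper. The Moore's conjecture part, however, has a genuine gap. The intermediate claim that $\Omega\caa^K\in\mathcal{P}$ for \emph{every} $K$ is false, so the inductive step ``$\Sigma A_v\wedge G\in\mathcal{W}$'' cannot be proved in general.

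Here is a counterexample. Take $A_i=S^1$, which satisfies all the hypotheses, and let $K$ be the six-vertex triangulation of $\mathbb{RP}^2$. Then $\caa^K=\mathcal{Z}_K$. Every pair of vertices of $K$ spans an edge, so the minimal missing faces are triangles and $\mathcal{Z}_K$ is $4$-connected. The BBCG summand $\Sigma|K|\ast\widehat{A}^{[6]}=\Sigma^{8}\mathbb{RP}^2$ of $\Sigma\mathcal{Z}_K$ (equivalently, Hochster's formula) gives $\mathbb{Z}/2\subseteq H_8(\mathcal{Z}_K)$. Since $\mathcal{Z}_K$ is $4$-connected, the Serre spectral sequence of the path-loop fibration gives $H_7(\Omega\mathcal{Z}_K)\cong H_8(\mathcal{Z}_K)$. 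Hence $H_*(\Omega\mathcal{Z}_K;\mathbb{Z})$ has torsion, whereas every space in $\mathcal{P}$ has torsion-free homology.

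Both mechanisms you propose for $\Sigma A_v\wedge G$ break here:
\begin{itemize}
\item The fibration $G\to\caa^{\mathrm{lk}(v)}\to\caa^{K\setminus v}$ only supplies a map $\Omega\caa^{K\setminus v}\to G$. Nothing produces a map back making $\Sigma A_v\wedge G$ a retract of $\Sigma A_v\wedge\Omega\caa^{K\setminus v}$; that would hold if, for instance, $\caa^{\mathrm{lk}(v)}\to\caa^{K\setminus v}$ were null homotopic. In this example $\mathrm{lk}(v)$ is a $5$-cycle, which is not even full in the M\"obius band $K\setminus v$.
\item Pieces $\Sigma^k|K_I|\wedge\widehat{A}^I$ can only lie in $\mathcal{W}$ when $|K_I|$ has torsion-free homology, and this fails for $K_I=K$.
\end{itemize}
There is also a small slip: with $v$ a ghost vertex of the empty complex $L$, one has $\caa^L=A_v$, not a point. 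That is what actually yields $F\simeq A_v\ast G$; with a point you would get $F\simeq\ast$.

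The missing idea is that in the hyperbolic case you need only one retract of $\Omega\caa^K$ without exponents, not a full decomposition. The paper splits into two cases.
\begin{itemize}
\item If the minimal missing faces are mutually disjoint, then $K\cong\Delta\ast\partial\sigma_1\ast\dots\ast\partial\sigma_n$. So $\Omega\caa^K\simeq\prod_j\Omega\caa^{\partial\sigma_j}$, with each $\caa^{\partial\sigma_j}\simeq\Sigma^{n_j-1}A_{j_1}\wedge\dots\wedge A_{j_{n_j}}\in\mathcal{W}$. In this case $\Omega\caa^K\in\mathcal{P}$ really does hold, and Lemma~\ref{lem: PWspaces-moore} covers both the elliptic and hyperbolic subcases.
\item If two minimal missing faces meet, then $\caa^K$ is hyperbolic by Theorem~\ref{moore}. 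By \cite[Theorem 4.2]{HAO_2018}, $\Omega(\caa^{\partial\sigma_1}\vee\caa^{\partial\sigma_2})$ retracts off $\Omega\caa^K$. This is loops on a wedge of at least two simply-connected spheres, which has no exponent at any prime by Neisendorfer--Selick.
\end{itemize}
In the example above every two missing triangles intersect, since the complement of a missing triangle is a face. So it falls in the second case, which is exactly where your universal $\mathcal{P}$-decomposition strategy fails.
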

\begin{proof}
   If $K$ has mutually disjoint minimal missing faces, there is a simplicial isomorphism $K ~\cong K_0*K_1* \dots * K_n$, where $K_0$ is a simplex and for $1 \leq j \leq n$, $K_j= \partial \sigma_j$ (This result is generally known, but a proof may be found in \cite{MR3633135}). If there are no missing faces of $K$, then $K = \Delta^{m-1}$, and so $\caa^K \simeq \prod_{i=1}^m CA_i \simeq *$, as each $CA_i$ is contractible. By the definition of a polyhedral product, there is a homeomorphism $\uxa^{K_1*K_2} \cong \uxa^{K_1} \times \uxa^{K_2} $. Therefore there is a homotopy equivalence
    \[\Omega \caa^{K} \simeq \prod_{i=1}^n\Omega \caa^{\partial \sigma_j}.\] Consider $\Omega \caa^{\partial \sigma_j}$. By \cite[Proposition 2.3]{grbic2011homotopy}, $\caa^{\partial \sigma_j} \simeq \Sigma^{j_n-1} A_{j_1} \wedge \dots \wedge A_{j_n}$, where $\{j_1, \dots, j_n\}$ is the vertex set of $\partial \sigma_j$. As each $\Sigma A_{j_i} \in \mathcal{W}$, $\caa^{\partial \sigma_j} \in \mathcal{W}$ by Lemma \ref{lem: PWspaces} \eqref{lem: PWspaces-join-suspension}. By Lemma \ref{lem: PWspaces} \eqref{lem: PWspaces-loop}, $\Omega \caa^{\partial \sigma_j} \in \mathcal{P}$ for all $j \in \{1,\dots,n\}$. Therefore $\Omega \caa^{K} \in \mathcal{P}$, and by Lemma \ref{lem: PWspaces-moore}, Moore's conjecture holds. 

    Now suppose $K$ has at least two minimal missing faces with a non-trivial intersection. By Theorem \ref{moore}, $\caa^K$ is not rationally elliptic, and therefore must be rationally hyperbolic. By \cite[Theorem 4.2]{HAO_2018}, $\Omega (\caa^{\partial \sigma_1} \vee \caa^{\partial \sigma_2})$ retracts off $\Omega \caa^K$. By Lemma \ref{lem: PWspaces} \eqref{lem: PWspaces-join-suspension}, $\caa^{\partial \sigma_1}$ and $\caa^{\partial \sigma_2}$ are elements of $\mathcal{W}$, and so $\caa^{\partial \sigma_1} \vee \caa^{\partial \sigma_2} \in \mathcal{W}$. By assumption, each $A_i$ is rationally nontrivial, and therefore $\caa^{\partial \sigma_1} \vee \caa^{\partial \sigma_2}$ must be a wedge of at least two simply-connected spheres. By \cite{NeisendorferSelick1982}, a wedge of at least two simply-connected spheres has no homotopy exponent at any prime $p$. Therefore Moore's conjecture holds in this case. 

    Finally, for our last claim, Theorem \ref{moore} gives necessary and sufficient conditions for $\caa^K$ to be rationally elliptic. Condition $1$ of Theorem \ref{moore} is always satisfied for the pair $\caa$, as $CA_i$ is trivially rationally elliptic. Condition $2$ implies that $K$ must have mutually disjoint minimal missing faces. The homotopy fibre $Y_i$ is $A_i$ for the pair $\caa$, and so we obtain that $A_i \simeq_{\mathbb{Q}} S^n$ for every vertex $i$ in a minimal missing face of $K$. 
\end{proof}

\begin{thm}\label{thm: mooretorsionfreepp}
   Let $K$ be a simplicial complex on the vertex set $[m]$ and let $\caa$ be a sequence of pairs such that for all $i \in [m]$, $A_i$ is a finite $CW$-complex, and each $A_i$ has torsion-free homology. Then if $\caa^K$ is rationally hyperbolic, $\caa^K$ has no homotopy exponent for any odd prime $p$.
\end{thm}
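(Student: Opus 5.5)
Since $\caa^K$ is rationally hyperbolic, Theorem \ref{moore} shows that $K$ is not a simplex and that its minimal missing faces cannot all be mutually disjoint with every vertex $A_v$ rationally spherical. The plan is to find a full subcomplex $L$ of $K$ such that $\caa^L$ is itself rationally hyperbolic and is homotopy equivalent to a suspension of a finite torsion-free complex. Lemma \ref{lem: left inverse} then makes $\caa^L$ a retract of $\caa^K$, so $\pi_*(\caa^L)$ is a direct summand of $\pi_*(\caa^K)$. It would therefore suffice to show that $\caa^L$ has no exponent at odd primes, which follows from Theorem \ref{thm: mooretorsionfree}.

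The candidates for $L$ come from the two ways ellipticity can fail. First, suppose some minimal missing face $\sigma$ has a vertex $v$ with $A_v$ not rationally a sphere. Take $L = \partial\sigma$, which is a full subcomplex because $\sigma$ is a minimal missing face. By \cite[Proposition 2.3]{grbic2011homotopy}, $\caa^{\partial\sigma} \simeq \Sigma^{|\sigma|-1} A_{j_1}\wedge\dots\wedge A_{j_n}$. This is a suspension of a finite complex, and its homology is torsion-free by the Künneth theorem because each $A_i$ has torsion-free homology. It is rationally hyperbolic: a rational suspension is a rational wedge of spheres, and that wedge has at least two spheres since $H_*(A_v;\mathbb{Q})$ has rank at least two while the other factors are rationally nontrivial. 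Second, suppose two minimal missing faces $\sigma_1,\sigma_2$ intersect. Take $L$ to be the full subcomplex on $\sigma_1\cup\sigma_2$. This case is more delicate, because $\caa^L$ need not be a suspension. I would instead use \cite[Theorem 4.2]{HAO_2018}, which says that $\Omega(\caa^{\partial\sigma_1}\vee\caa^{\partial\sigma_2})$ retracts off $\Omega\caa^K$. Since $\pi_*(\Omega Y)$ is $\pi_*(Y)$ shifted down by one, this still gives a direct summand of the homotopy groups of $\caa^K$. The space $\caa^{\partial\sigma_1}\vee\caa^{\partial\sigma_2} = \Sigma(W_1\vee W_2)$ is again a suspension of a finite torsion-free complex, and it is rationally hyperbolic because it is rationally a wedge of at least two simply-connected spheres. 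So Theorem \ref{thm: mooretorsionfree} again rules out an exponent at any odd $p$.

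A degenerate situation needs care: some $A_i$ may be rationally trivial. Since $H_*(A_i)$ is torsion-free and $A_i$ is finite, a rationally trivial $A_i$ has trivial reduced integral homology, so it is acyclic. I will assume, or arrange via the connectivity hypotheses implicit in Theorem \ref{moore}, that each $A_i$ is simply connected, in which case an acyclic $A_i$ is contractible. Then $(CA_i,A_i)\simeq(*,*)$, and the vertex $i$ can be deleted, replacing $K$ by its restriction to the remaining vertices, without changing the homotopy type of $\caa^K$. This reduces to the case where every $A_i$ is rationally nontrivial, which is the setting of Theorem \ref{moore}.

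The main obstacle is checking that the summand produced by the retraction really carries all the unbounded $p$-torsion. For the wedge $\Sigma(W_1\vee W_2)$, the suspended smash products have positive connectivity once $|\sigma_j|\ge 2$, so Theorem \ref{thm: mooretorsionfree} applies directly. Hyperbolicity of the wedge is automatic because each summand $\caa^{\partial\sigma_j}$ has nonzero rational homology. Once these points are in place, the theorem follows in both cases.
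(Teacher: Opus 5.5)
Your proof is correct and follows the paper's argument. In the intersecting-faces case (via \cite[Theorem 4.2]{HAO_2018} and the wedge $\caa^{\partial\sigma_1}\vee\caa^{\partial\sigma_2}$) and in the remaining case, you retract a rationally hyperbolic finite torsion-free suspension off $\caa^K$ or its loop space and apply Theorem \ref{thm: mooretorsionfree}. The differences are minor: the paper uses the join decomposition $K\cong K_0*\partial\sigma_1*\cdots*\partial\sigma_n$ and picks a hyperbolic factor of $\prod_j\Omega\caa^{\partial\sigma_j}$, whereas you locate that factor via condition (3) of Theorem \ref{moore} and the full-subcomplex retraction of Lemma \ref{lem: left inverse}; and your reduction for rationally trivial $A_i$ (under your added simple-connectivity assumption) makes explicit the rational nontriviality that the paper's proof uses without it being a hypothesis.
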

\begin{proof}
    Assume $\caa^K$ is rationally hyperbolic. There are two cases to consider; either $K$ has mutually disjoint missing faces, or $K$ has at least two minimal missing faces with a non-trivial intersection. If $K$ has mutually disjoint missing faces, and as above, we obtain a homotopy equivalence  \[\Omega \caa^{K} \simeq \prod_{i=1}^n\Omega \caa^{\partial \sigma_j}. \] As $\caa^K$ is rationally hyperbolic, at least one factor in this product must be rationally hyperbolic. Denote this factor by $\caa^{\partial \sigma_k}$. By \cite[Proposition 2.3]{grbic2011homotopy}, $\caa^{\partial \sigma_k}$ is homotopy equivalent to $\Sigma^{k_n-1} A_{k_1} \wedge~\dots \wedge~ A_{k_n}$. As each $A_{k_i}$ has torsion-free homology, the reduced K{\"u}nneth formula implies $\Sigma^{k_n-1} A_{k_1} \wedge~\dots \wedge~ A_{k_n}$ also has torsion-free homology. Therefore Theorem \ref{thm: mooretorsionfree} implies that $\caa^{\partial \sigma_k}$ has no homotopy exponent at any odd prime $p$. As $\Omega \caa^{\partial \sigma_k}$ retracts off $\Omega \caa^K$, $\Omega \caa^K$ has no homotopy exponents at any odd prime $p$. 
   
    Now suppose $K$ has at least two minimal missing faces with a nontrivial intersection. As before, $\Omega (\caa^{\partial \sigma_i} \vee~ \caa^{\partial \sigma_j})$ retracts off $\Omega \caa^K$. Both $\caa^{\partial \sigma_i} \simeq \Sigma^{i_n-1} A_{i_1} \wedge\dots \wedge A_{i_n}$ and $\caa^{\partial \sigma_j} \simeq \Sigma^{j_n-1} A_{j_1} \wedge\dots \wedge A_{j_n}$ are finite suspensions, and so we can rewrite this as $\Sigma ((\Sigma^{i_n-2} A_{i_1} \wedge\dots \wedge A_{i_n}) \vee ( \Sigma^{j_n-2} A_{j_1} \wedge\dots \wedge A_{j_n})$, which is a finite suspension. By \cite[Theorem 24.5]{FelixHalperinThomas2001} both $\caa^{\partial \sigma_1}$ and $\caa^{\partial \sigma_2}$ are rationally homotopy equivalent to a finite wedge of spheres, so $\caa^{\partial \sigma_1} \vee \caa^{\partial \sigma_2}$ is also rationally homotopy equivalent to a finite wedge of spheres. Each $A_i$ is rationally nontrivial, and so there are at least two spheres present in such a wedge. Therefore $\caa^{\partial \sigma_1} \vee \caa^{\partial \sigma_2}$ is rationally hyperbolic. Furthermore, as each $A_i$ has torsion-free homology, $\caa^{\partial \sigma_1} \vee \caa^{\partial \sigma_2}$ has torsion-free homology, and by Theorem \ref{thm: mooretorsionfree}, $\caa^{\partial \sigma_1} \vee \caa^{\partial \sigma_2}$ has no homotopy exponents at all odd primes $p$. As $\Omega (\caa^{\partial \sigma_1} \vee \caa^{\partial \sigma_2})$ retracts off $\Omega \caa^K$, $\caa^K$ has no homotopy exponent at any odd prime.
\end{proof}

To study rational and mod-$p^r$ hyperbolic polyhedral products and their homotopy exponents, we specialise to the pair $\ux$, assuming that each $X_i$ is rationally elliptic. Polyhedral products associated to pairs of the form $\ux$ are a well-studied family. A central example is the Davis-Januszkiewicz space in toric topology. Other important cases, including the pairs $(S^1,*)$, $(BG,*)$, and $(S^{2k+1},*)$, have also been extensively studied. In particular $(S^1,*)$ and $(BG,*)$ are closely connected with right-angled Artin groups, right-angled Coxeter groups, and graph products.

\begin{thm}\label{thm: hyperboliclocalised}
    Let $\overline{M}$ be a simplicial complex defined by the pushout \eqref{pushout:gen}. Let $\ux$ be a sequence of pairs such that each $X_i$ is rationally elliptic, $X_i$ is rationally non-trivial, and $\Omega X_i$ is not rationally homotopy equivalent to a sphere. If $L \subset K$ is a full subcomplex of $K$, and $N \subset M$, then for all but finitely many primes $p$, $\ux^{\overline{M}}$ is rationally hyperbolic and $\mathrm{mod}$-$p^r$ hyperbolic for all $r \geq 1$. Furthermore, $\ux^{\overline{M}}$ has no homotopy exponent at all but finitely many primes. 
\end{thm}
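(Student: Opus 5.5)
We start from Corollary \ref{cor: uxloopspace}, which applies because $L$ is a full subcomplex of $K$. It gives
\[\Omega \ux^{\overline{M}} \simeq \Omega\ux^M \times \Omega \ux^L \times \Omega H\times \Omega(\Sigma G \wedge \Omega H),\]
where $H$ is the homotopy fibre of the retraction $\ux^K \to \ux^L$ and $G$ is the homotopy fibre of $\ux^N \to \ux^M$. Hyperbolicity, mod-$p^r$ hyperbolicity and the absence of exponents are all detected by a single retract of $\Omega \ux^{\overline{M}}$. So the plan is to concentrate on the factor $\Omega(\Sigma G \wedge \Omega H)$ and show that, after localising at all but finitely many primes, $\Sigma G\wedge \Omega H$ splits as a wedge of at least two simply-connected spheres. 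This uses implicitly that $L\neq K$ and $N \neq M$, so that $H$ and $G$ are nontrivial. If these are not automatic from the setup I would add them as harmless hypotheses.

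The first step is to show that $G$ and $\Omega H$ are rationally nontrivial. For $\Omega H$, we have $\Omega\ux^K \simeq \Omega \ux^L\times \Omega H$. Some vertex of $K$ is not in $L$, and the corresponding $X_i$ (rationally nontrivial) retracts off $\ux^K$ but not off $\ux^L$. Comparing Poincaré series of the loop spaces, which are multiplicative, shows that $\Omega H$ has nontrivial rational homology. For $G$, the splitting $\Omega\ux^N\simeq \Omega\ux^M\times\Omega G$ is not available, because $N$ need not be full. Instead I would use that $G$ is the fibre of a map that is not a rational equivalence, since $N$ is a proper subcomplex and some face missing from $N$ contributes new cells. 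I expect this to need some care.

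The main step, and the one I expect to be the obstacle, is the $p$-local splitting. Each $X_i$ is rationally elliptic, so by Theorem \ref{thm : loop decomp}, for all but finitely many primes $\Omega X_i$ is $p$-locally a finite product of spheres and loops on odd spheres. I would like to propagate this to $\Omega H$ and to $\Sigma G$. Since $\Omega \ux^K$ and $\Omega \ux^L$ have finite-type rational homology built from the $\Omega X_i$, I would use the known decompositions of loops on polyhedral products of the form $\ux$ to write $\Omega H$ as a retract of $\Omega \ux^K$. After inverting finitely many primes this makes $\Omega H$ a product of spheres and loops on spheres, that is, an element of $\mathcal{P}$ after localisation. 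The hypothesis that $\Omega X_i$ is not rationally a sphere is what guarantees infinitely many (in particular at least two) sphere factors rather than a single one.

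Then $\Sigma G \wedge \Omega H\simeq G\wedge \Sigma\Omega H$, and $\Sigma\Omega H$ is $p$-locally in $\mathcal{W}$ by Lemma \ref{lem: PWspaces}\eqref{lem: PWspaces-suspension}. The space $\Sigma G$ is a suspension of finite type, so for all but finitely many primes (those dividing no torsion in low degrees) it is $p$-locally a wedge of spheres in any given range. Combined with Lemma \ref{lem: PWspaces}\eqref{lem: PWspaces-join-suspension}, this makes $\Sigma G\wedge\Omega H$ $p$-locally a wedge of at least two simply-connected spheres; the finiteness of the set of excluded primes needs an argument using that only the bottom cells are needed to produce a retract $S^a\vee S^b$. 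Finally, $\Omega(S^a\vee S^b)$ is a retract of $\Omega\ux^{\overline{M}}_{(p)}$. By \cite{Hilton} this gives rational hyperbolicity, by \cite{Boyde2022} mod-$p^r$ hyperbolicity for all $r\geq1$, and by \cite{NeisendorferSelick1982} the absence of a $p$-exponent, for all but the finitely many excluded primes.
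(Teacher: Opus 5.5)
Your outer frame matches the paper: Corollary \ref{cor: uxloopspace}, concentrating on $\Omega(\Sigma G\wedge\Omega H)$, rewriting it as $G\wedge\Sigma\Omega H$, and transferring properties from a retracted wedge of spheres. But both places where real input is needed are left open, and the routes you sketch would not close them.

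For $H$, you assert that known decompositions of $\Omega\ux^K$ make $\Omega H$ $p$-locally an element of $\mathcal{P}$. You do not say which decomposition gives this for general $K$. Moreover, being a retract of a space in $\mathcal{P}$ would not by itself put $\Omega H$ in $\mathcal{P}$. It is also unnecessary. The missing observation (Lemma \ref{lem: retractH}) is that for a vertex $i\in K$ not in $L$, $X_i$ itself retracts off $H$:
\begin{itemize}
\item the inclusion $X_i=\ux^{\{i\}}\to\ux^K$ becomes null-homotopic after composing with the retraction $\ux^K\to\ux^L$, since $i\notin L$, so it lifts to $H$;
\item the retraction $\ux^K\to X_i$ onto the full subcomplex $\{i\}$, restricted to $H$, is a left inverse.
\end{itemize}
Thus $\Sigma\Omega X_i$ retracts off $\Sigma\Omega H$, and by Theorem \ref{thm : loop decomp} and Lemma \ref{lem: PWspaces}\eqref{lem: PWspaces-suspension} it is $p$-locally a wedge of at least two spheres. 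The finite set of excluded primes comes only from applying \cite{McW} to the finitely many $X_j$; no cell-range argument is involved.

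The more serious gap is $G$. Rational nontriviality of $G$, which you leave unproved, would not suffice. The bottom-cell argument also fails. Knowing that a low skeleton of $(\Sigma G\wedge\Omega H)_{(p)}$ is a wedge containing $S^a\vee S^b$ gives no left homotopy inverse for $S^a\vee S^b\to\Sigma G\wedge\Omega H$. Extending a map over the higher cells meets obstructions in $\pi_*(S^a\vee S^b)$, which is nonzero in infinitely many degrees, and without a retraction nothing about $\pi_*(S^a\vee S^b)$ passes to $\pi_*(\ux^{\overline{M}})$. In addition, $G$ is an infinite complex, so ``a wedge of spheres in any given range after inverting finitely many primes'' gives no uniform finite set of primes. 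The claim that all of $\Sigma G\wedge\Omega H$ splits is also stronger than anything you establish or need.

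What is needed is a concrete retract of $G$, which is the content of Lemma \ref{lem: rectractG}. There are two cases.
\begin{itemize}
\item If some vertex $j$ of $M$ is not in $N$, compare $G\to\ux^N\to\ux^M$ with $\Omega X_j\to *\to X_j$ via the retraction $\ux^M\to X_j$. This shows $\Omega X_j$ retracts off $G$.
\item Otherwise some minimal missing face $\tau$ of $N$, with vertices $j_1,\dots,j_l$, lies in $M$. Naturality of the retractions onto the full subcomplexes $\partial\tau\subseteq N$ and $\tau\subseteq M$ shows that the fibre of $\ux^{\partial\tau}\to\ux^{\tau}$ retracts off $G$. By \cite{porter} this fibre is $\Omega X_{j_1}*\cdots*\Omega X_{j_l}$, which is $p$-locally a wedge of spheres.
\end{itemize}
Since $\bigvee_\alpha\Sigma^{n_\alpha}G_{(p)}$, with at least two summands, is a retract of $(G\wedge\Sigma\Omega H)_{(p)}$, either case yields a wedge of at least two simply-connected spheres as a retract. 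Only then do \cite{Hilton}, \cite{Boyde2022} and \cite{NeisendorferSelick1982} apply.
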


\begin{corollary}\label{thm: hyperbolicspheres}
    Let $\us$ denote the sequence of pairs $\{(\mathcal{S}_i,*)\}_{i=1}^m$ where $\mathcal{S}_i = \prod_{j=1}^{r_i} S^{n_{ij}}$ with $n_{ij} \geq 2$ for all $i \in [m]$ and all $j \in [r_i]$. Let $\overline{M}, L,K,N$ and $M$ be as in Theorem \ref{thm: hyperboliclocalised}.  Then for all primes $p$, $\us^{\overline{M}}$ is rationally hyperbolic and $\mathrm{mod}$-$p^r$ hyperbolic for all $r \geq 1$. Furthermore, $\us^{\overline{M}}$ has no homotopy exponent at any prime. 
\end{corollary}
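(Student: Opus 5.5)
We use the loop space decomposition of Corollary \ref{cor: uxloopspace},
\[\Omega \us^{\overline{M}} \simeq \Omega\us^M \times \Omega \us^L \times \Omega H\times \Omega(\Sigma G \wedge \Omega H),\]
and concentrate on the factor $\Omega(\Sigma G \wedge \Omega H)$. Since it is a retract of $\Omega\us^{\overline{M}}$, it suffices to show that $\Sigma G\wedge \Omega H$ is, for every prime $p$, rationally hyperbolic, mod-$p^r$ hyperbolic and without $p$-exponent. The point of the corollary, compared with Theorem \ref{thm: hyperboliclocalised}, is that for $X_i = \mathcal{S}_i$ a product of spheres we can control everything integrally rather than only away from a finite set of primes. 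Indeed, for polyhedral products $\us^{K}$ on products of simply-connected spheres, the loop space $\Omega \mathcal{S}_i = \prod_j \Omega S^{n_{ij}}$ lies in $\mathcal{P}$, and $\Sigma\Omega\mathcal{S}_i\in\mathcal{W}$ by Lemma \ref{lem: PWspaces}\eqref{lem: PWspaces-suspension}.

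The first step is to identify the fibres $H$ (of the retraction $\us^K\to\us^L$) and $G$ (of $\us^N\to\us^M$) well enough to show that $\Sigma G\wedge\Omega H$ is, integrally, a wedge containing at least two simply-connected spheres. The approach is as follows.

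\begin{enumerate}
\item For the full subcomplex $L\subset K$, the fibre of $\us^K\to\us^L$ is known, by the Porter/Grbi\'c--Theriault type description of fibres of retractions of polyhedral products on pairs $\ux$, to be a polyhedral product over pairs $(\Omega\mathcal{S}_i \ltimes \ldots)$. In practice, after one suspension it splits as a wedge of spaces of the form $\Sigma \Omega\mathcal{S}_{i_1}\wedge\dots\wedge\Omega\mathcal{S}_{i_k}$ and their half-smash variants.
\item Since $\Sigma\Omega\mathcal{S}_i\in\mathcal{W}$ and $\mathcal{W}$ is closed under smash products (Lemma \ref{lem: PWspaces}\eqref{lem: PWspaces-join-suspension}), I expect $\Sigma G\wedge\Omega H\in\mathcal{W}$, using James' splitting $\Sigma\Omega\Sigma Y\simeq\bigvee\Sigma Y^{\wedge k}$ where needed.
\item To see that this wedge has at least two spheres, I reuse the rational argument from Theorem \ref{thm: hyperboliclocalised}: there it is shown rationally that $\Sigma G\wedge\Omega H$ is rationally a wedge of at least two spheres. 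Since $\Omega H$ is a nontrivial loop space of finite type with unbounded homology, $\Sigma G\wedge\Omega H$ is in fact an infinite wedge. Rational nontriviality of $H$ uses that $L$ is a proper full subcomplex and $n_{ij}\ge2$; nontriviality of $G$ uses $N\subsetneq M$.
\end{enumerate}

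Once $W:=\Sigma G\wedge\Omega H\in\mathcal{W}$ is a wedge of at least two simply-connected spheres, Hilton--Milnor shows $\Omega W$ contains $\Omega(S^a\vee S^b)$ as a retract, hence contains $\Omega S^{a}\times\Omega S^{b}\times \Omega S^{a+b-1}\times\cdots$. Three consequences follow:
\begin{itemize}
\item rational hyperbolicity, by the example of \cite{Hilton};
\item mod-$p^r$ hyperbolicity for all $p$ and $r\ge1$, by \cite{Boyde2022} applied to the retract $S^a\vee S^b$;
\item absence of an exponent at every prime, by \cite{NeisendorferSelick1982}, or equivalently by Lemma \ref{lem: PWspaces-moore}, since $\Omega W\in\mathcal{P}$ is rationally hyperbolic.
\end{itemize}
All three properties pass to $\us^{\overline{M}}$ because $\Omega W$ is a retract of $\Omega\us^{\overline{M}}$: homotopy groups of a retract inject, so exponential growth of $\mathbb{Z}/p^r$ summands and unbounded $p$-torsion are inherited.

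The main obstacle is step (2): showing integrally, at all primes, that $\Sigma G\wedge\Omega H$ is a wedge of spheres. Theorem \ref{thm: hyperboliclocalised} presumably only achieved this after localising away from finitely many primes, via Theorem \ref{thm : loop decomp}. Here I would replace McGibbon--Wilkerson by the integral statement $\Omega\mathcal{S}_i\in\mathcal{P}$, and argue that $G$ and $H$, as fibres of retractions of polyhedral products on $(\mathcal{S}_i,\ast)$, have suspensions in $\mathcal{W}$. If identifying $G$ fully is difficult, it suffices to find a single sphere $S^a$ retracting off $\Sigma G$ (a bottom cell, which splits since $\Sigma G$ is a suspension with torsion-free homology in the relevant range). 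Then $S^a\wedge\Omega H=\Sigma^{a}\Omega H$ is a retract, and $\Sigma\Omega H\in\mathcal{W}$ with infinitely many spheres already suffices.
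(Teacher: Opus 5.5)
\textbf{There is a genuine gap: you never actually produce the wedge of spheres integrally.}

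Your outer frame matches the paper's. You use Corollary~\ref{cor: uxloopspace}, exhibit an integral wedge of at least two simply-connected spheres as a retract of $\Sigma G\wedge\Omega H$, and finish with Hilton, Boyde and Neisendorfer--Selick. The problem is the one step with real content: producing that wedge integrally.

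\emph{Your main route does not work.} You assert that all of $\Sigma G\wedge\Omega H$ lies in $\mathcal{W}$, based on an unspecified ``Porter/Grbi\'c--Theriault type'' description of $H$. This is not proved, and it is false in general. Take $M$ a simplex on six vertices and $N\subset M$ the six-vertex triangulation of $\mathbb{RP}^2$. Then $\us^M=\prod\mathcal{S}_v$, so $G\simeq(\underline{C\Omega\mathcal{S}},\underline{\Omega\mathcal{S}})^N$. By the Bahri--Bendersky--Cohen--Gitler splitting, $\Sigma G$ has $\Sigma^2\mathbb{RP}^2\wedge\Omega\mathcal{S}_{v_1}\wedge\cdots\wedge\Omega\mathcal{S}_{v_6}$ as a wedge summand, where $v_1,\dots,v_6$ are the vertices of $N$. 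So $G$ has $2$-torsion in homology. By K\"unneth, so does $\Sigma G\wedge\Omega H$, and it is therefore not a wedge of spheres.

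Step~(3) cannot repair this. Rational information about $G$ and $H$ gives no integral or $p$-local retractions, and those are exactly what mod-$p^r$ hyperbolicity and the absence of a $p$-exponent require.

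\emph{Your fallback fails as justified.} The principle that the bottom cell of a suspension with torsion-free homology splits off is false: $\Sigma\mathbb{CP}^2=S^3\cup_{\Sigma\eta}e^5$ with $\Sigma\eta\neq 0$. You have also not shown that $H_*(G)$ is torsion-free, and the example above shows it need not be. Finally, $\Sigma\Omega H\in\mathcal{W}$ is again asserted without proof.

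\textbf{The missing idea.} You only need \emph{retracts} of $G$ and of $\Sigma\Omega H$ that lie in $\mathcal{W}$, and the preceding lemmas supply these integrally.
\begin{enumerate}
\item By Lemma~\ref{lem: retractH}, $\mathcal{S}_i$ retracts off $H$ for some vertex $i$ of $K$ not in $L$. Hence $\Sigma\Omega\mathcal{S}_i$ retracts off $\Sigma\Omega H$. By Lemma~\ref{lem: PWspaces}\eqref{lem: PWspaces-suspension} and the James splitting, $\Sigma\Omega\mathcal{S}_i$ is an integral wedge of infinitely many simply-connected spheres.
\item The proof of Lemma~\ref{lem: rectractG} is integral until its last step. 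If some vertex $j$ of $M$ is not in $N$, then $G$ retracts onto $\Omega\mathcal{S}_j$. Otherwise, Porter's theorem applied to a minimal missing face of $N$ lying in $M$ (on vertices $1,\dots,l$, say) shows that $G$ retracts onto $\Omega\mathcal{S}_1*\cdots*\Omega\mathcal{S}_l\simeq\Sigma^{l-1}\Omega\mathcal{S}_1\wedge\cdots\wedge\Omega\mathcal{S}_l$ with $l\geq 2$.
\item Localisation was used in that lemma only to split $\Omega X_j$ via McGibbon--Wilkerson. This is unnecessary here, because $\Omega\mathcal{S}_j=\prod_k\Omega S^{n_{jk}}\in\mathcal{P}$ already.
\item Smashing the two retracts gives, by Lemma~\ref{lem: PWspaces}, a retract of $\Sigma G\wedge\Omega H\simeq G\wedge\Sigma\Omega H$ that is an integral wedge of infinitely many simply-connected spheres.
\end{enumerate}
This is the paper's argument, and your final paragraph then applies as written.
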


We first need some results concerning the homotopy fibres $H$ and $G$. 

\begin{lem}\label{lem: retractH}
    Let $H$ be the homotopy fibre of the map $\ux^K \to \ux^L$ as defined in Lemma \ref{lem: retraction diagram}, where $L$ is a proper full subcomplex of $K$. Then for some vertex $i \in K$, $X_i$ retracts off $H$.  
\end{lem}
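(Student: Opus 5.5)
Since $L$ is a proper full subcomplex of $K$, there is a vertex $i$ of $K$ that is not a vertex of $L$. Otherwise fullness would force $L = K$. Here we view $L$ on the vertex set of $K$, with any vertex of $K$ not in $L$ treated as a ghost vertex of $L$. The plan is to use this vertex to produce maps $X_i \to H \to X_i$ whose composite is the identity.

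\textbf{The map into $H$.} The vertex $\{i\}$ is a simplex of $K$, so $X_i$ (with all other coordinates at the basepoint) is a subspace of $\ux^K$; write $s\colon X_i \to \ux^K$ for this inclusion. The retraction $r\colon \ux^K \to \ux^L$ of Lemma \ref{lem: left inverse} is the restriction of the coordinate projection onto the vertices of $L$. Since $i \notin L$, the composite $r \circ s$ is the constant map, exactly, not just up to homotopy. A null-homotopic map into the base lifts to the homotopy fibre, so $s$ lifts to a map $\tilde s\colon X_i \to H$ whose composite with the fibre inclusion $H \to \ux^K$ is $s$.

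\textbf{The map out of $H$.} Take the coordinate projection $q\colon \ux^K \to \ux^{\{i\}} = X_i$. This is again the restriction of the projection $\prod X_j \to X_i$, and it is the map of Lemma \ref{lem: left inverse} for the full subcomplex $\{i\}$. Precompose it with the fibre inclusion $H \to \ux^K$.

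\textbf{The composite.} The composite $X_i \to H \to \ux^K \to X_i$ equals $q \circ s$. Since $s$ places $X_i$ in the $i$-th coordinate and $q$ projects onto that coordinate, $q \circ s$ is the identity. Hence $X_i$ retracts off $H$.

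\textbf{Main point to check.} The only delicate step is the lift in the first paragraph: we need $r\circ s$ to be constant, and not merely null-homotopic in some uncontrolled way. This holds because $r$ literally forgets the $i$-th coordinate. We also need $i$ to exist, which is exactly where properness and fullness of $L$ are used.
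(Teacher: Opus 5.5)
Your proposal is correct and follows essentially the same argument as the paper. You choose a vertex $i$ of $K$ not in $L$, lift the inclusion $X_i \to \ux^K$ to $H$ because its composite with $r$ is trivial, and use the fibre inclusion followed by the projection $\ux^K \to X_i$ as the left inverse. Your remark that this composite with $r$ is constant on the nose, so the retraction holds strictly rather than only up to homotopy, is a small sharpening of the paper's version, which only uses a null-homotopy.
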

\begin{proof}
    Recall that by Lemma \ref{lem: left inverse}, there exists a map $r: \ux^K \to \ux^L$ that is a left inverse to $i: \ux^L \to \ux^K$. As $L \subset K$, there exists some vertex $i$ such that $i \notin L$. As $\{i\}$ is a full subcomplex of $K$, by Lemma \ref{lem: left inverse} there are maps $i': \ux^{\{i\}} \to \ux^K$ and $r': \ux^K \to \ux^{\{i\}}$ such that $r' \circ i' \simeq id_{\ux^{\{i\}}}$. As $\ux^{\{i\}} = X_i$, $X_i$ retracts off $\ux^K$. Furthermore, as $i \notin L$, the map $r \circ i': X_i \to \ux^L$ is null-homotopic, and so $i'$ lifts to a map $\overline{i}:X_i \to H$. Let $p: H \to \ux^K$ be the fibre inclusion. Then the composite $r' \circ p$ gives a left inverse to $\overline{i}$, and therefore $X_i$ retracts off $H$.
\end{proof}

\begin{lem}\label{lem: rectractG}
    Let $G$ be the homotopy fibre of the map $\ux^N \to \ux^M$, where $N \subset M$. Then either $\Omega X_i$ retracts off $G$ for some vertex $i$, or for all but finitely many primes $p$, a wedge of spheres $\bigvee_{\beta \in B} S^{m_\beta}$, $m_{\beta} \geq 2$, $|\beta| \geq 1$ retracts off $G_{(p)}$. 
\end{lem}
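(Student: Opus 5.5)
We will split into two cases according to whether $N$ contains all the vertices of $M$. Here $N$ is allowed ghost vertices, and $\ux^N$ is taken with vertex set that of $M$.

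\textbf{Case 1: some vertex $i$ of $M$ is not a vertex of $N$.} This includes the case where $i$ is a ghost vertex of $N$. The coordinate $X_i$ then never appears in $\ux^N$, so the composite
\[\ux^N \to \ux^M \xrightarrow{r'} \ux^{\{i\}} = X_i\]
is constant. Here $r'$ is the left inverse of Lemma \ref{lem: left inverse} for the full subcomplex $\{i\}$. By Lemma \ref{lem: retraction diagram}, the map $\ux^N\to\ux^M$ is compatible with the projections to the coordinate $i$.

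Let $G'$ be the homotopy fibre of $r'$. There is an induced map of fibres $G \to \Omega X_i \times \ldots$. More concretely, we use the map $\Omega X_i \to G$ obtained from the connecting map $\Omega \ux^M \to G$ precomposed with $\Omega i'$. The composite
\[\Omega X_i \xrightarrow{\Omega i'} \Omega\ux^M \xrightarrow{\partial} G \to \ux^N \to \ux^M \to X_i\]
is null. The other composite is obtained by mapping $G$ into the fibre of $* \to X_i$, namely $\Omega X_i$. It uses the map of fibrations from $G \to \ux^N \to \ux^M$ to $\Omega X_i \to * \to X_i$, which exists because the composite to $X_i$ is null. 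This gives $G \to \Omega X_i$, and composing gives $\Omega X_i \to \Omega\ux^M \to G \to \Omega X_i$. By naturality of connecting maps this composite is $\Omega(r'\circ i') \simeq 1$. So $\Omega X_i$ retracts off $G$.

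\textbf{Case 2: $N$ and $M$ have the same vertex set.} Since $N \subsetneq M$, there is a minimal face $\sigma\in M$ with $\sigma\notin N$, and all proper faces of $\sigma$ lie in $N$. The full subcomplexes on $\sigma$ are $N_\sigma \supseteq \partial\sigma$ and $M_\sigma = \sigma$ (a simplex). Using Lemma \ref{lem: retraction diagram} with the retractions onto these full subcomplexes, the fibre of $\ux^{N_\sigma} \to \ux^{\sigma}$ retracts off $G$. The argument is the same as in Case 1: we build maps of fibrations in both directions using the inclusions and retractions of full subcomplexes, and the composite is the identity on fibres.

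Minimality of $\sigma$ gives $N_\sigma = \partial\sigma$. Then $\ux^{\partial\sigma}$ is the fat wedge, and $\ux^\sigma = \prod_{j\in\sigma} X_j$. The fibre of the fat wedge inclusion is classically known (Porter / Ganea) to be
\[\Sigma^{|\sigma|-1}\Omega X_{j_1}\wedge\cdots\wedge \Omega X_{j_k}.\]

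Each $X_j$ is rationally elliptic, so by Theorem \ref{thm : loop decomp}, for all but finitely many $p$ we have $\Omega X_{j(p)} \simeq$ a product of spheres and loops on odd spheres. Taking the union over the finitely many $j$, only finitely many primes are excluded. Such products lie in $\mathcal{P}$ $p$-locally. A smash of such spaces, suspended at least once (note $|\sigma|\ge 2$ when $|\sigma|-1\ge1$), is a wedge of spheres by Lemma \ref{lem: PWspaces}. These spheres are simply connected, and there is at least one of them since each $\Omega X_j$ is rationally nontrivial.

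If $|\sigma|=1$, say $\sigma=\{i\}$, then $i$ is a vertex of $N$ but $\{i\}\notin N$. That is, $i$ behaves as a ghost vertex and we are back in Case 1.

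\textbf{Main obstacle.} The main obstacle is making the retraction of fibres rigorous in both cases, that is, checking that the maps of fibrations induced by the inclusions and retractions of full subcomplexes compose to the identity on fibres. I would handle this via the naturality of Lemmas \ref{lem: retractioneq} and \ref{lem: retraction diagram} applied to the square formed by $N_\sigma\subseteq N$ and $\sigma\subseteq M$.
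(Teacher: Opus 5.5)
Your proposal is correct and follows essentially the same route as the paper: the same case split (a vertex of $M$ missing from $N$, giving $\Omega X_i$ as a retract of $G$ via the connecting map, versus a minimal missing face $\sigma$ of $N$ lying in $M$, giving Porter's fibre $\Sigma^{|\sigma|-1}\Omega X_{j_1}\wedge\cdots\wedge\Omega X_{j_k}$ as a retract of $G$ via inclusions and retractions of full subcomplexes), followed by the McGibbon--Wilkerson splitting and Lemma \ref{lem: PWspaces} to obtain a wedge of simply-connected spheres after localising away from finitely many primes. The differences are cosmetic: you produce $\sigma$ explicitly as a minimal element of $M\setminus N$, and you note directly that the composite of fibre maps is the identity, where the paper invokes the five-lemma and Whitehead's theorem.
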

\begin{proof}
    There are two cases to consider: either there is a vertex $i \notin N$ contained in $M$, or there is a minimal missing face of $\mathrm{dim} \geq 1$ of $N$ that is contained in $M$. Let us first consider the case where $i \in M$, $i \notin N$. The space $X_i$ retracts off $\ux^M$, as $i$ is a full subcomplex of $M$. Furthermore, as $i \notin N$, the composite $\ux^N \to \ux^M \to X_i$ is null-homotopic. Therefore there is a homotopy fibration diagram
    \[\begin{tikzcd}
        \Omega \ux^M \arrow[d, "\Omega r"] \arrow[r,"\partial"]
        & G \arrow[d, "r'"] \arrow[r]
        & \ux^N \arrow[d] \arrow[r]
        & \ux^M \arrow[d, "r"] \\
        \Omega X_i \arrow[r, "="]
        & \Omega X_i \arrow[r]
        & * \arrow[r]
        & X_i,
    \end{tikzcd}\]
    where the maps $\partial$ and $r'$ are defined by the diagram. As $\iota$ is a right inverse for $r$, the left square implies $r'$ has right inverse $\partial \circ \Omega \iota$. Therefore $\Omega X_i$ retracts off $G$. 

    Now consider the second case. Without loss of generality, let $\tau$ be a minimal missing face of $N$ on the vertex set $\{1,\dots, l\}$ that is contained in $M$. As this is a minimal missing face, $\partial \tau \subseteq N$. Note that $\tau$ and $\partial \tau$ are full subcomplexes of $M$ and $N$ respectively. Consider the diagram 
    \[\begin{tikzcd}
        \overline{F} \arrow[r] \arrow[d,"a"] 
        & \ux^{\partial \tau} \arrow[r] \arrow[d]
        & \ux^{\tau} \arrow[d] \\
        G \arrow[r] \arrow[d,"b"] 
        & \ux^{N} \arrow[r] \arrow[d, "r'"]
        & \ux^{M} \arrow[d,"r"] \\
        \overline{F} \arrow[r] 
        & \ux^{\partial \tau} \arrow[r] 
        & \ux^{\tau}.  \\
    \end{tikzcd}\] 
    The upper right square is induced by the inclusions $\partial \tau$ and $\tau$ into $N$ and $M$, and so commutes. Furthermore, on the vertex set $\{1,\dots, l\}$, $\partial \tau$ and $\tau$ are full subcomplexes of $N$ and $M$ respectively, and so by Lemma \ref{lem: left inverse} the maps $r$ and $r'$ exist and the middle and right columns are the identity maps. The lower right square commutes by naturality of Lemma \ref{lem: retraction diagram}. By taking homotopy fibres horizontally, we obtain maps $a$ and $b$ of homotopy fibres that make the upper and lower left squares commute.     
    
    By \cite{porter}, $\overline{F} \simeq \Omega X_1 * \dots * \Omega X_l$. The outer rectangle is a homotopy fibration diagram in which the middle and right vertical maps are the identity, and so the five-lemma applied to the long exact sequence of homotopy groups induced by this diagram gives that $b \circ a$ is an isomorphism of homotopy groups. Furthermore, as $\Omega X_1 * \dots * \Omega X_l$ is a $CW$-complex, Whitehead's theorem implies that $b \circ a$ is a homotopy equivalence. Therefore $\Omega X_1 * \dots * \Omega X_l$ retracts off $G$. After localising at primes such that Theorem \ref{thm : loop decomp} holds, we obtain that $(\Omega X_1 * \dots * \Omega X_l)_{(p)}$ retracts off $G_{(p)}$. By Lemma \ref{lem: PWspaces} \eqref{lem: PWspaces-join-suspension} we obtain $(\Omega X_1 * \dots * \Omega X_l)_{(p)} \simeq (\Sigma^{l-1} \Omega X_1 \wedge \dots \wedge \Omega X_l)_{(p)}$. Furthermore, $ (\Sigma^{l-1} \Omega X_1 \wedge \dots \wedge \Omega X_l)_{(p)} \simeq \Sigma^{l-1} (\Omega X_1)_{(p)} \wedge \dots \wedge (\Omega X_l)_{(p)}$. By Theorem \ref{thm : loop decomp}, each $(\Omega X_j)_{(p)}$ is homotopy equivalent to $\prod_{j} S^{2m_j-1} \times \prod_{k} \Omega S^{2n_k-1}$. Let $\mathcal{S}_j$ denote the product of spheres associated with $(\Omega X_j)_{(p)}$, and let $\mathcal{L}_j$ denote the product of loops on spheres associated with $(\Omega X_j)_{(p)}$. Consider $\Sigma(\Omega X_j)_{(p)} \simeq \Sigma (\mathcal{S}_j \times \mathcal{L}_j) \simeq \Sigma S_j \vee \Sigma \mathcal{L}_j \vee \Sigma (\mathcal{S}_j \wedge \mathcal{L}_j)$. The suspension of a product of spheres is homotopy equivalent to a wedge of spheres, and applying the James splitting to $\Sigma \mathcal{L}_j$ gives that $\Sigma \mathcal{L}_j \in \mathcal{W}$. Therefore each $\Sigma(\Omega X_j)_{(p)} \in \mathcal{W}$. Distributing smash products gives $\Sigma^{l-1} (\Omega X_1)_{(p)} \wedge \dots \wedge (\Omega X_l)_{(p)}$ is homotopy equivalent to a wedge of spheres $\bigvee_{\beta \in B} S^{m_\beta}$ where $m_\beta \geq 2$ and $|B| \geq 1$. 
\end{proof}

\begin{proof}[Proof of Theorem \ref{thm: hyperboliclocalised}]
    By Corollary \ref{cor: uxloopspace} there is a homotopy equivalence \[\Omega\ux^{\overline{M}} \simeq \Omega \ux^{M} \times \Omega\ux^L \times \Omega H \times \Omega(\Sigma G \wedge \Omega H).\]After localising, we have \[(\Omega\ux^{\overline{M}})_{(p)} \simeq (\Omega \ux^{M})_{(p)} \times (\Omega\ux^L)_{(p)} \times (\Omega H)_{(p)} \times (\Omega(\Sigma G \wedge \Omega H))_{(p)}.\] To show that $ \ux^{\overline {M}}$ is rationally hyperbolic and has no homotopy exponent, we show a wedge of spheres retracts off $(\Sigma G \wedge \Omega H)$ after localisation. The homotopy equivalence of Corollary \ref{cor: uxloopspace} implies that if $\bigvee S^n$ retracts off $(\Sigma G \wedge \Omega H)_{(p)}$, then $\pi_*(\bigvee S^n)$ retracts off $\pi_*((\ux^{\overline{M}})_{(p)})$. As a wedge of spheres is rationally hyperbolic and $\mathrm{mod}$-$p^r$ hyperbolic, $\ux^{\overline{M}}$ is both rationally hyperbolic and $\mathrm{mod}$-$p^r$ hyperbolic for all but finitely many primes $p$. Furthermore, as a wedge of spheres has no homotopy exponent, $\ux^{\overline{M}}$ has no homotopy exponent at all but finitely many primes $p$.

    Consider $(\Sigma G \wedge \Omega H)_{(p)} \simeq~\Sigma G_{(p)} \wedge \Omega H_{(p)}$. Distributing the smash product over the wedge gives 
    $\Sigma G_{(p)} \wedge \Omega H_{(p)} \simeq~ G_{(p)} \wedge~ \Sigma \Omega H_{(p)}$. By Lemma \ref{lem: retractH}, there exists some vertex $i \notin N$ such that $X_i$ is a retract of $H$, and so $\Sigma \Omega (X_i)_{(p)}$ retracts off $\Sigma \Omega H_{(p)}$. By Theorem \ref{thm : loop decomp}, $\Omega (X_i)_{(p)} \in \mathcal{P}$, and so by Lemma \ref{lem: PWspaces} \eqref{lem: PWspaces-suspension}, $\Sigma \Omega (X_i)_{(p)} \in \mathcal{W}$. By assumption, $\Omega X_i$ is not rationally homotopy equivalent to a sphere, and so at least two spheres are present in this wedge. Therefore, $\Sigma \Omega (X_i)_{(p)} \simeq \bigvee_{\alpha \in A} S^{n_\alpha}$ and so $ \bigvee_{\alpha \in A} S^{n_\alpha}\wedge G_{(p)} \simeq \bigvee_{\alpha \in A} \Sigma^{n_\alpha} G_{(p)}$ is a retract of $\Sigma G_{(p)} \wedge \Omega H_{(p)}$ for some indexing set $A$ where $|A| \geq 2$. By Lemma \ref{lem: rectractG}, there are now two cases to consider. 
    
    \emph{Case 1: $\bigvee_{\beta \in B} S^{m_\beta}$ retracts off $ G_{(p)}$.}  If some sphere $S^{m_{\beta}}$, $m_{\beta} \geq 2$ retracts off $G_{(p)}$, then as the suspension of a wedge of spheres is a wedge of spheres $\bigvee_{\alpha \geq1, \beta \geq 2 } S^{n_\alpha+m_\beta}$ retracts off $\Sigma G_{(p)} \wedge \Omega H_{(p)}$.

    \emph{Case 2: $\Omega (X_j)_{(p)}$ retracts off $G_{(p)}$ for some $j \notin N$.} In this case, we have a wedge $ \bigvee_{\alpha \in A} \Sigma^{n_\alpha} \Omega (X_j)_{(p)}$ retracting off $\Sigma G_{(p)} \wedge \Omega H_{(p)}$. As before, by Theorem \ref{thm : loop decomp} and Lemma \ref{lem: PWspaces} \eqref{lem: PWspaces-suspension}, $\Sigma^{n_\alpha} \Omega (X_j)_{(p)} \in \mathcal{W}$. By assumption, $\Omega (X_j)_{(p)}$ is not rationally homotopy equivalent to a sphere, and so the number of terms appearing in this wedge is at least two. Therefore $\bigvee_{\alpha \in A} \Sigma^{n_\alpha} \Omega (X_j)_{(p)}$ is a wedge of spheres, where each sphere has dimension at least two, and at least two spheres appear in this decomposition. 

    In both cases, after localising, a wedge of at least two
    spheres retracts off $(\Sigma G \wedge \Omega H)$. Therefore  $\ux^{\overline{M}}$ is rationally hyperbolic and $\mathrm{mod}$-$p^r$ hyperbolic for all but finitely many primes $p$. Furthermore, $\ux^{\overline{M}}$ has no finite homotopy exponent at all but finitely many primes $p$.
\end{proof}

\begin{proof}[Proof of Corollary \ref{thm: hyperbolicspheres}]
    Let $X_i = \mathcal{S}_i$. By Lemma \ref{lem: retractH}  $\mathcal{S}_i$ retracts off $H$, and so $\Sigma \Omega \mathcal{S}_i$ retracts off $\Sigma \Omega H$. By Lemma \ref{lem: PWspaces} \eqref{lem: PWspaces-suspension} and the James splitting, this implies an infinite wedge of spheres retracts off $\Sigma \Omega H$. Then to show $\us^{\overline{M}}$ is rationally hyperbolic, $\mathrm{mod}$-$p^r$ hyperbolic and has no finite homotopy exponent at any prime, we need to show that there exists a retract of $\Sigma G$ such that this retract is in 
    $\mathcal{W}$. By Lemma \ref{lem: rectractG}, if $N$ is a proper full subcomplex, an infinite wedge of spheres retracts off $\Sigma G$, and if $N$ is not a proper full subcomplex, $\Sigma^{l-1} \Omega \mathcal{S}_1 \wedge \dots \wedge \Omega \mathcal{S}_l$ retracts off $G$, and by Lemma \ref{lem: PWspaces} \eqref{lem: PWspaces-suspension} and \eqref{lem: PWspaces-join-suspension}, $\Sigma^{l-1} \Omega \mathcal{S}_1 \wedge \dots \wedge \Omega \mathcal{S}_l \in \mathcal{W}$ without needing to localise. Therefore $\us^{\overline{M}}$ is rationally hyperbolic, $\mathrm{mod}$-$p^r$ hyperbolic and has no finite homotopy exponent at any prime. 
    \end{proof}

\begin{remark}
    If each $X_i$ is a finite suspension, we can be explicit about which primes Theorem \ref{thm: hyperboliclocalised} holds for. Let $d$ denote the dimension and let $s$ denote the connectivity of $X_i$. Let $\mathcal{P}(X_i)$ denote the set of primes $\{q \mid q \leq \frac{1}{2}(1+d-s) \; \mathrm{or } \; H_*(X_i;\mathbb{Z}) \; \mathrm{has} \;q\mathrm{-torsion} \}$. By \cite[Lemma 5.1]{HuangTheriault2024}, after localising away from $\mathcal{P}(X_i)$ for all primes in $\cup _{i=1}^m\mathcal{P}(X_i)$, $X_i$ is homotopy equivalent to a wedge of spheres. Furthermore, as $X_i$ is rationally elliptic, we obtain that for $p \notin \mathcal{P}(X_i)$, $(X_i) \simeq_{(p)} S^n$ for some $n \geq 2$. After looping, we obtain $\Omega X_i \simeq_{(p)} \Omega S^n$. 
\end{remark}

\section{An application to polyhedral join products}
We now introduce the polyhedral join product. The polyhedral join product was first defined in full generality in \cite{construct}. 

\begin{defn}
    Let $M$ be a simplicial complex on $[m]$ vertices, and let $\kl=\{(K_i,L_i)\}_{i=1}^m$ be a sequence of pairs of simplicial complexes. Define $(\underline{K},\underline{L})^{*\sigma}$ as  \[(\underline{K},\underline{L})^{*\sigma} = \overset{m}{\underset{i=1} \bigast} \; Y_i,\;\; Y_i = \begin{cases} K_i & i \in \sigma \\ L_i & \mathrm{otherwise}. \end{cases}\] Then the \emph{polyhedral join product} is \[(\underline{K},\underline{L})^{*M} = \bigcup_{\sigma \in M} (\underline{K},\underline{L})^{*\sigma} \subseteq \bigast_{i=1}^m K_i.\]
\end{defn}

We denote the vertex set of $\kl^{M}$ as $\{[k_1],\dots,[k_m]\}$. 
 We now consider some examples of polyhedral join products on pairs of simplicial complexes.

 \begin{ex}\label{pjpex1}
 \begin{enumerate}
     \item Let $\kl$ be a sequence of pairs of simplicial complexes, and let $M =~ \Delta^{m-1}$. Then the polyhedral join product is 
     $\kl^{*M} = K_1*\dots*K_m$.
     \item  Let $\kl$ be a sequence of pairs of simplicial complexes, and let $M = \emptyset$. Then the polyhedral join product is 
     $\kl^{*M} =L_1*\dots*L_m$.
     \item Let $\kl$ be a sequence of pairs of simplicial complexes, where each pair $(K_i,L_i) = (i,\emptyset)$. Then the polyhedral join product on this pair is $\kl^{*M} = M$.
     \item Let $\kl$ be a sequence of pairs of simplicial complexes, where each pair $(K_i,L_i)=(i,i)$. Then the polyhedral join product on this pair is $\kl^{*M} = \Delta^{m-1}$. 
 \end{enumerate}
 \end{ex}

  There are two important families of the polyhedral join product. 

\begin{ex} \label{pjpex}
Let $M$ be a simplicial complex on the vertex set $[m]$, and let $K_1,\cdots,K_m$ be simplicial complexes on the vertex sets $[k_1],\cdots,[k_m]$ respectively. 
\begin{enumerate}
    \item The \emph{substitution complex} $M(K_1,\cdots,K_m)$ as introduced in \cite{Abramyan_2019} is the polyhedral join product on the pairs $(K_i,\emptyset)$.
    \item  The \emph{composition complex}  $M \langle K_1 ,\cdots, K_m \rangle$ as introduced in \cite{construct} is the polyhedral join product on the pairs $\{(\Delta^{m_i-1},K_i)\}_{i=1}^m$. 
\end{enumerate}
\end{ex}

For a vertex $i$ in a simplicial complex $M$, the \emph{link}, \emph{star} and \emph{restriction} are the subcomplexes 
\[\mathrm{lk}_M(i) = \{\tau \in M \mid \tau \cap i = \emptyset, \tau \cup i \in M\}\]
\[\mathrm{st}_{M}(i) = \{\sigma \in M \mid i \cup \sigma \in M\}\]
\[M \setminus i = \{\sigma \in M \mid i \cap \sigma = \emptyset\}.\]

Note that $\mathrm{st}_M(i) = \mathrm{lk}_M(i)*i$, and by definition, there is a pushout
\[\begin{tikzcd}
    \mathrm{lk}_M(i) \arrow[r] \arrow[d] 
    & \mathrm{lk}_M(i)*i \arrow[d] \\
    M \setminus i \arrow[r]
    & M.
\end{tikzcd}\]

By \cite[Lemma 3.3]{Eldridge_2026}, any pushout of simplicial complexes induces a pushout of polyhedral join products, and so we obtain the following result, where $(K_i,L_i)$ is the pair of simplicial complexes associated with the vertex $i$ in the polyhedral join product. 
 
\begin{corollary}\cite[Corollary 3.6]{Eldridge_2026}\label{pjppushout}
    There is a pushout 
    \[\begin{tikzcd}
        \kl^{*\mathrm{lk}_M(i)} * L_i \arrow[r] \arrow[d]
        & \kl^{*\mathrm{lk}_M(i)} *K_i \arrow[d] \\
        \kl^{*M \setminus i} * L_i \arrow[r]
        & \kl^{*M}.
    \end{tikzcd}\]
    where $\mathrm{lk}_M(i)$ is regarded as a simplicial complex on the vertex set of $M \setminus i$. \qed
\end{corollary}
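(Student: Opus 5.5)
The plan is to prove the statement directly from the definition of the polyhedral join product. We apply it to the pushout $\mathrm{lk}_M(i) \to \mathrm{st}_M(i) = \mathrm{lk}_M(i)*i$, $\mathrm{lk}_M(i) \to M\setminus i$ with colimit $M$ displayed just above. All four complexes are regarded on the vertex set $[m]$, so $i$ is a ghost vertex of $\mathrm{lk}_M(i)$ and of $M\setminus i$. (This is the content of \cite[Lemma 3.3]{Eldridge_2026} specialised to this pushout, but the special case can be checked by hand.) A pushout of simplicial complexes along inclusions is simply a union of two subcomplexes glued along their intersection, so it suffices to prove two things: that $\kl^{*M}$ is the union of $\kl^{*\mathrm{st}_M(i)}$ and $\kl^{*M\setminus i}$, and that their intersection is $\kl^{*\mathrm{lk}_M(i)}$. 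After that we identify each corner as a join.

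\textbf{Membership criterion.} A simplex of $\bigast_j K_j$ has the form $\tau = \tau_1 \sqcup \dots \sqcup \tau_m$ with $\tau_j \in K_j$, possibly empty. Define its support by
\[S(\tau) = \{ j \mid \tau_j \notin L_j\}.\]
Then $\tau \in \kl^{*\sigma}$ if and only if $S(\tau) \subseteq \sigma$. Since a simplicial complex is closed under taking subsets, for any complex $A$ on $[m]$ we get
\[\tau \in \kl^{*A} \iff S(\tau) \in A.\]
From this criterion both claims are immediate:
\[\kl^{*A}\cap \kl^{*B} = \kl^{*(A\cap B)}, \qquad \kl^{*A}\cup\kl^{*B} = \kl^{*(A\cup B)}.\]
We also have $\mathrm{st}_M(i)\cap (M\setminus i) = \mathrm{lk}_M(i)$ and $\mathrm{st}_M(i)\cup(M\setminus i) = M$, because a face of $M$ either contains $i$, and then lies in the star, or it does not. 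Hence the square of polyhedral join products is a pushout.

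\textbf{Identifying the corners.} If $i$ is a ghost vertex of $A$, then $i \notin \sigma$ for every $\sigma \in A$. The $i$-th factor is therefore always $L_i$, which gives
\[\kl^{*A} = \kl^{*A'} * L_i,\]
where $A'$ is $A$ viewed on the remaining vertices. This handles $\mathrm{lk}_M(i)$ and $M\setminus i$. For the star, the faces are $\sigma$ and $\sigma \cup i$ with $\sigma \in \mathrm{lk}_M(i)$. The $\sigma\cup i$ term contributes $\kl^{*\sigma}$ (on the other vertices) $*K_i$, and this contains the $\sigma$ term because $L_i \subseteq K_i$. So
\[\kl^{*\mathrm{st}_M(i)} = \kl^{*\mathrm{lk}_M(i)} * K_i.\]
Finally, the maps in the square are all the natural inclusions, so the square matches the one in the statement.

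The only step needing real care is the membership criterion. In particular, one must check that $\tau \in \kl^{*\sigma}$ depends only on $S(\tau)$, and that empty $\tau_j$ (and empty $L_j$) are handled correctly. I expect this bookkeeping to be the main, though mild, obstacle. Everything after it is formal set manipulation.
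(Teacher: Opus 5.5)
Your proof is correct and follows the paper's route. The paper applies \cite[Lemma 3.3]{Eldridge_2026} (a pushout of simplicial complexes induces a pushout of polyhedral join products) to the pushout $\mathrm{lk}_M(i)\to\mathrm{lk}_M(i)*i$, $\mathrm{lk}_M(i)\to M\setminus i$ with colimit $M$, and reads off the corners as joins with $L_i$ or $K_i$; your support criterion $\tau\in\kl^{*A}\iff S(\tau)\in A$ just proves the needed case of that lemma by hand, which makes the argument self-contained.
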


Finally, we consider full subcomplexes of the polyhedral join product. 

\begin{lem}\cite[Lemma 4.9]{soton504036}\label{lem: fullsubcomplexpjp}
    All full subcomplexes of $\kl^{*M}$ are of the form $(\underline{P},\underline{Q})^{*N}$ where $N$ is a full subcomplex of $M$, and $(\underline{P},\underline{Q})= \{(P_i,Q_i)\}_{i \in V(N)}$, where each $P_i$ is a full subcomplex of $K_i$, and each $Q_i$ is the restriction of $L_i$ to the vertex set of $P_i$. \qed
\end{lem}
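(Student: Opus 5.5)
We identify a full subcomplex with its vertex set and check simplex by simplex that the restriction of $\kl^{*M}$ to that set equals the stated polyhedral join product. The vertex set of $\kl^{*M}\subseteq \bigast_{i=1}^m K_i$ is the disjoint union $[k_1]\sqcup\dots\sqcup[k_m]$. A full subcomplex is determined by a subset $W$ of this set, namely it is $\kl^{*M}|_W=\{\tau\in\kl^{*M}\mid \tau\subseteq W\}$. Write $W=W_1\sqcup\dots\sqcup W_m$ with $W_i\subseteq[k_i]$, and set $V=\{i\in[m]\mid W_i\neq\emptyset\}$. Define
\[
N=M|_V,\qquad P_i=K_i|_{W_i},\qquad Q_i=L_i|_{W_i}\quad (i\in V).
\]
By construction $N$ is a full subcomplex of $M$, each $P_i$ is a full subcomplex of $K_i$, and $Q_i$ is the restriction of $L_i$ to the vertex set of $P_i$. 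The claim then reduces to the equality $\kl^{*M}|_W=(\underline{P},\underline{Q})^{*N}$.

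The key step is an intrinsic description of the simplices of a polyhedral join product. Every simplex of $\bigast_i K_i$ decomposes uniquely as $\tau=\tau_1\sqcup\dots\sqcup\tau_m$ with $\tau_i\in K_i$, where $\tau_i$ may be the empty simplex. Define $\sigma(\tau)=\{i\mid \tau_i\notin L_i\}$. I will show that $\tau\in\kl^{*M}$ if and only if $\sigma(\tau)\in M$.
\begin{itemize}
\item If $\tau$ lies in $\kl^{*\rho}$ for some $\rho\in M$, then $\tau_i\in L_i$ for every $i\notin\rho$. Hence $\sigma(\tau)\subseteq\rho$, and so $\sigma(\tau)\in M$ because $M$ is closed under taking faces.
\item Conversely, if $\sigma(\tau)\in M$, then $\tau\in\kl^{*\sigma(\tau)}$ directly from the definition.
\end{itemize}
The same characterisation holds for $(\underline{P},\underline{Q})^{*N}$ on the vertex set $V$.

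Now let $\tau\subseteq W$. Each $\tau_i\subseteq W_i$, so $\tau_i\in K_i$ if and only if $\tau_i\in P_i$, since $P_i$ is full. Likewise $\tau_i\in L_i$ if and only if $\tau_i\in Q_i$, since $Q_i$ is the full restriction of $L_i$. Hence the set $\sigma(\tau)$ computed with respect to $\kl$ agrees with the one computed with respect to $(\underline{P},\underline{Q})$.

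Moreover $\sigma(\tau)\subseteq V$. Indeed, if $W_i=\emptyset$ then $\tau_i$ is the empty simplex, and the empty simplex lies in $L_i$, so $i\notin\sigma(\tau)$. Since $N=M|_V$ is full, $\sigma(\tau)\in M$ if and only if $\sigma(\tau)\in N$. Combining these observations with the characterisation gives $\tau\in\kl^{*M}$ if and only if $\tau\in(\underline{P},\underline{Q})^{*N}$, which proves the equality and hence the lemma.

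I expect the main obstacle to be bookkeeping rather than substance. The characterisation $\tau\in\kl^{*M}\iff\sigma(\tau)\in M$ relies on the empty-simplex conventions: the empty simplex lies in every $L_i$, including when $L_i=\emptyset$ as a complex, as in the substitution complex. Vertices $j$ of $W_i$ with $\{j\}\notin Q_i$ must be treated as genuine vertices of $P_i$ that lie outside $Q_i$. Indices with $W_i=\emptyset$ must be discarded, which is what forces $N$ to be the restriction to $V$ rather than all of $M$. I would state these conventions explicitly before running the argument.
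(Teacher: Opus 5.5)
Your proof is correct. The paper states this lemma without proof, citing the author's thesis, so there is no in-text argument to compare with. Your argument has two steps. First, you characterise the simplices by $\tau\in\kl^{*M}\iff\sigma(\tau)\in M$, where $\sigma(\tau)=\{i\mid \tau_i\notin L_i\}$. Second, you check that for $\tau\subseteq W$ the conditions $\tau_i\in K_i$, $\tau_i\in L_i$ and $\sigma(\tau)\in M$ are equivalent to $\tau_i\in P_i$, $\tau_i\in Q_i$ and $\sigma(\tau)\in N$. This is a complete, self-contained verification.

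Two points are worth making explicit in a write-up.

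First, the paper allows ghost vertices, for instance when $\mathrm{lk}_M(i)$ is regarded on the vertex set of $M\setminus i$ in Corollary \ref{pjppushout}. So $M$ may itself have ghost vertices. In that case $N=M|_V$ must be regarded on the vertex set $V$, ghosts included, and the pairs $(P_i,Q_i)$ are indexed by $V$ rather than by the genuine vertices of $N$. Otherwise information is lost. Take $M=\{\emptyset,\{1\}\}$ on $[2]$, any $(K_1,L_1)$, $K_2=L_2$ a single vertex $a$, and $W=\{a\}$. The full subcomplex is then $\{\emptyset,\{a\}\}$. This equals $(\underline{P},\underline{Q})^{*N}=Q_2$ only if the index $2$ is kept as a ghost vertex of $N=\{\emptyset\}$. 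Your proof already works on $V$, so this is only a matter of stating the convention. Separately, discarding the indices with $W_i=\emptyset$ is a normalisation, not a necessity. Keeping them with $P_i=Q_i=\{\emptyset\}$ gives the same complex, since such $i$ never lie in $\sigma(\tau)$.

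Second, your identity $\kl^{*M}|_W=(\underline{P},\underline{Q})^{*N}$ proves more than the literal statement. Read backwards, it shows that every complex of the stated form, with $N=M|_V$ and $P_i=K_i|_{W_i}$, is the full subcomplex on $W=\bigsqcup_{i\in V}W_i$. The lemma as worded ("all full subcomplexes are of this form") does not give this converse. The paper nevertheless uses it immediately afterwards to deduce that $\kl^{*M\setminus m}$ is a full subcomplex of $\kl^{*M}$, so it is worth recording as part of your conclusion.
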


In particular, by Lemma \ref{lem: fullsubcomplexpjp}, $\kl^{*M \setminus m}$ is a full subcomplex of $\kl^{*M}$, as $M \setminus m$ is a full subcomplex of $M$. Therefore, by Proposition \ref{prop: loopspacegen}  and Corollary \ref{pjppushout}, we obtain the following.

\begin{prop}
    There exists a homotopy fibration 
    \[ F_m \to \uxa^{\kl^{*M}} \to \uxa^{\kl^{*M \setminus m}} \]  that splits after looping
    \[\Omega \uxa^{\kl^{*M}} \simeq \Omega \uxa^{\kl^{M\setminus m}} \times \Omega F.\] The space $F_m$ is defined by the homotopy pushout 
    \[\begin{tikzcd}
       \uxa^{L_m} \times G_m \arrow[r,"i_m \times 1"] \arrow[d,"\pi_1"] 
       & \uxa^{K_m} \times G_m \arrow[d] \\
       \uxa^{L_m} \arrow[r]
       & F_m,
    \end{tikzcd} \] where $G_m$ is the homotopy fibre of the map $\uxa^{\kl^{*\mathrm{lk}_M(m)}} \to \uxa^{\kl^{*M \setminus m}}$. \qed
\end{prop}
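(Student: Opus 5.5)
This proposition is the specialisation of Proposition \ref{prop: loopspacegen} to the pushout of Corollary \ref{pjppushout} at the vertex $m$. The plan is to check that this pushout of simplicial complexes has exactly the shape \eqref{pushout:gen}, identify the pieces, and read off the conclusion.

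First, I will match the data. In \eqref{pushout:gen}, take $K = K_m$ and $L = L_m$, placed on the vertex set $[k_m]$. Take $M$ there to be $\kl^{*M\setminus m}$ and $N$ to be $\kl^{*\mathrm{lk}_M(m)}$, both regarded on the vertex set $[k_1]\sqcup\dots\sqcup[k_{m-1}]$. The complex $\overline{M}$ is then $\kl^{*M}$. We need $L \subseteq K$, which holds since $L_m\subseteq K_m$. We also need $N\subseteq M$: since $\mathrm{lk}_M(m)\subseteq M\setminus m$, we get $\kl^{*\mathrm{lk}_M(m)}\subseteq\kl^{*M\setminus m}$ by monotonicity of the polyhedral join product in the indexing complex. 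With this identification, Corollary \ref{pjppushout} is literally the pushout \eqref{pushout:gen}, with ghost vertices allowed in $N$ and $L$ as the setup permits.

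Second, I need the retraction used in Proposition \ref{prop: loopspacegen}. This requires that $\kl^{*M\setminus m}$ be a full subcomplex of $\kl^{*M}$ with no ghost vertices relative to its own vertex set. By Lemma \ref{lem: fullsubcomplexpjp}, with $N = M\setminus m$ (a full subcomplex of $M$) and $P_i = K_i$, $Q_i = L_i$ for $i\neq m$, it is indeed a full subcomplex. Lemma \ref{lem: left inverse} then supplies the retraction $r$, and by its construction $r$ projects away the coordinates indexed by $[k_m]$. This is precisely the map used to build the diagrams \eqref{diagram1} and \eqref{diagram2}.

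With these checks done, Proposition \ref{prop: loopspacegen} yields the following, with $F_m$ the fibre of $r$ and $G_m$ the fibre of $\uxa^{\kl^{*\mathrm{lk}_M(m)}}\to\uxa^{\kl^{*M\setminus m}}$:
\begin{itemize}
\item the homotopy fibration $F_m\to\uxa^{\kl^{*M}}\to\uxa^{\kl^{*M\setminus m}}$;
\item its splitting after looping, $\Omega\uxa^{\kl^{*M}}\simeq\Omega\uxa^{\kl^{*M\setminus m}}\times\Omega F_m$;
\item the pushout description of $F_m$, with maps $i_m\times 1$ and $\pi_1$ as given by Lemma \ref{lem: mapsforfibre}.
\end{itemize}

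The main point requiring care is the hypothesis that $L$ and $K$ live on vertices disjoint from those of $N$ and $M$, together with the bookkeeping of ghost vertices. The link $\mathrm{lk}_M(m)$ may omit vertices of $M\setminus m$, and $L_m$ may omit vertices of $K_m$. So I would verify that Proposition \ref{funda2} is applied with every complex regarded on the full vertex set, which makes the corner polyhedral products $\uxa^{L_m}\times\uxa^{\kl^{*\mathrm{lk}_M(m)}}$ and so on. Ghost vertices then contribute factors of $A_i$, and these are absorbed consistently into the fibres $G_m$ and the factors $\uxa^{L_m}$. Once this is checked, the rest is a direct citation.
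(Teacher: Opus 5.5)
Your proposal is correct and follows the paper's own argument. The paper likewise uses Lemma \ref{lem: fullsubcomplexpjp} to note that $\kl^{*M\setminus m}$ is a full subcomplex of $\kl^{*M}$ (since $M\setminus m$ is full in $M$), and then applies Proposition \ref{prop: loopspacegen} to the pushout of Corollary \ref{pjppushout}, with exactly your identification $K=K_m$, $L=L_m$, $N=\kl^{*\mathrm{lk}_M(m)}$, $M=\kl^{*M\setminus m}$. One small refinement: as stated, Lemma \ref{lem: fullsubcomplexpjp} only describes the form that full subcomplexes take. It is cleaner to check fullness directly. A simplex of $\kl^{*M}$ avoiding the vertices of $K_m$ has empty $m$-th component, which lies in $L_m$. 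Hence the indices $i$ with $\tau_i\notin L_i$ form a face of $M\setminus m$.
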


Note that $\kl^{*M\setminus m}$ is itself a polyhedral join product, and applying Proposition \ref{prop: loopspacegen} to $\uxa^{\kl^{M \setminus m}}$, we obtain
\[\Omega \uxa^{\kl^{M \setminus m}} \simeq \Omega \uxa^{\kl^{M \setminus \{m, m-1\}}} \times \Omega F_{m-1},\] where the space $F_{m-1}$ is defined by the homotopy pushout 
    \[\begin{tikzcd}
       \uxa^{L_{m-1}} \times G_{m-1} \arrow[r,"i_{m-1} \times 1"] \arrow[d,"\pi_1"] 
       & \uxa^{K_{m-1}} \times G_{m-1} \arrow[d] \\
       \uxa^{L_{m-1}} \arrow[r]
       & F_{m-1},
    \end{tikzcd} \] and $G_{m-1}$ is the homotopy fibre of the map $\uxa^{\kl^{*\mathrm{lk}_{M\setminus m}(m-1)}} \to \uxa^{\kl^{*M \setminus\{ m, m-1\}}}$. Therefore, iterating this construction gives the following result.
\begin{corollary} There is a homotopy equivalence 
    \[\Omega \uxa^{\kl^{*M}} \simeq \prod \Omega F_i\]
    where each $F_i$ is defined by the pushout \[\begin{tikzcd}
        \uxa^{L_i} \times G_i \arrow[r,"i_i \times 1"] \arrow[d,"\pi_1"]
        & \uxa^{K_i} \times G_i \arrow[d] \\
        \uxa^{L_i} \arrow[r]
        & F_i
    \end{tikzcd}\]
    where $G_i$ is the homotopy fibre of the map $\uxa^{\kl^{*\mathrm{lk}_{M \setminus \{m ,\dots, i+1\}}(i)}} \to \uxa^{\kl^{*M \setminus \{m,\dots,i\}}}.$ \qed
\end{corollary}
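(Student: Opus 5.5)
The corollary follows by iterating the preceding proposition, inducting on the number of vertices $m$ of $M$. The base case is $m=0$, or $M$ with all vertices removed. Then $\kl^{*\emptyset}$ is the empty join, whose polyhedral product is a point, so its loop space is contractible and contributes nothing. The inductive step uses the preceding proposition, which gives a splitting $\Omega \uxa^{\kl^{*M}} \simeq \Omega \uxa^{\kl^{*M\setminus m}} \times \Omega F_m$. Here $F_m$ is described by the stated pushout, with $G_m$ the fibre of $\uxa^{\kl^{*\mathrm{lk}_M(m)}} \to \uxa^{\kl^{*M\setminus m}}$.

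Before applying induction I check that the proposition applies to $M\setminus m$, i.e. that $\kl^{*M\setminus m}$ is a polyhedral join product over a complex on $m-1$ vertices. Its defining pairs are $\{(K_i,L_i)\}_{i<m}$, viewed on the vertex set $[k_1]\sqcup\dots\sqcup[k_{m-1}]$. Its vertex $m-1$ again yields a pushout by Corollary \ref{pjppushout}, of the form
\[\kl^{*\mathrm{lk}_{M\setminus m}(m-1)}*L_{m-1} \to \kl^{*\mathrm{lk}_{M\setminus m}(m-1)}*K_{m-1}, \qquad \kl^{*M\setminus\{m,m-1\}}*L_{m-1}.\]
Moreover $\kl^{*M\setminus\{m,m-1\}}$ is a full subcomplex of $\kl^{*M\setminus m}$ by Lemma \ref{lem: fullsubcomplexpjp}, since $(M\setminus m)\setminus (m-1)$ is full in $M\setminus m$. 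These are exactly the hypotheses of Proposition \ref{prop: loopspacegen}, with $N=\mathrm{lk}$, $M=$ restriction, $K=K_{m-1}$, $L=L_{m-1}$. So the splitting holds at each stage.

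Applying the inductive hypothesis to $M\setminus m$ and multiplying by the factor $\Omega F_m$ gives the product $\prod_i \Omega F_i$. At stage $i$ the ambient complex is $M\setminus\{m,\dots,i+1\}$. Its link at $i$ is $\mathrm{lk}_{M\setminus\{m,\dots,i+1\}}(i)$ and its restriction at $i$ is $M\setminus\{m,\dots,i\}$. Hence $G_i$ is the fibre of exactly the stated map, and $F_i$ is given by the stated pushout, with maps $i_i\times 1$ and $\pi_1$ as in Lemma \ref{lem: mapsforfibre}.

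The main obstacle is bookkeeping of ghost vertices. The complexes $L_i$ and the links may not use every vertex, and Proposition \ref{prop: loopspacegen} was set up with $L$ and $N$ allowed to contain ghost vertices. So I will state explicitly that each stage matches the pushout \eqref{pushout:gen} with the vertex sets $\{1,\dots,k\}$ and $\{k+1,\dots\}$ arranged accordingly. Once that is confirmed, the remaining argument is formal.
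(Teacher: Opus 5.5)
Your proposal is correct and follows the paper's argument. The paper likewise applies the splitting $\Omega \uxa^{\kl^{*M}} \simeq \Omega \uxa^{\kl^{*M\setminus m}} \times \Omega F_m$ (Proposition \ref{prop: loopspacegen}, using Corollary \ref{pjppushout} and Lemma \ref{lem: fullsubcomplexpjp}) first to $\kl^{*M\setminus m}$, then to $\kl^{*M\setminus\{m,m-1\}}$, and so on, simply saying ``iterating this construction'' where you write out the induction. Your base case also matches, provided the restriction is regarded on the shrinking vertex set as in Corollary \ref{pjppushout}, so that removing all vertices leaves the loop space of a point (keeping ghost vertices would instead leave $\uxa^{L_1*\cdots*L_m}$).
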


Considering the case where $\uxa=\ux$ and each $L_i$ is a full subcomplex of $K_i$, we obtain the following.  

\begin{corollary}\label{cor: uxfullsubloopspace} There is a homotopy equivalence
    \[\Omega \ux^{\kl^{*M}} \simeq \prod_{i \in [m]} \Omega \ux^{L_i} \times \Omega H_i \times \Omega( \Sigma G_i \wedge \Omega H_i)\] where $G_i$ is the homotopy fibre of the map $\ux^{\kl^{*\mathrm{lk}_{M \setminus \{m,\dots, i+1\}}(i)}} \to \ux^{\kl^{*M \setminus \{m,\dots,i\}}}$ and $H_i$ is the homotopy fibre of $\ux^{K_i} \to \ux^{L_i}$. \qed
\end{corollary}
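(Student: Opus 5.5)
The plan is to iterate Proposition \ref{prop: loopspacegen} together with Lemma \ref{lem: fullsubcomplexfibre} and Lemma \ref{lem: loopspacefullsubfibre}, peeling off one vertex of $M$ at a time.

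The first step uses the pushout of Corollary \ref{pjppushout} for the vertex $m$. It has exactly the shape of \eqref{pushout:gen}, with $K=K_m$, $L=L_m$, $N=\kl^{*\mathrm{lk}_M(m)}$ and $M$ replaced by $\kl^{*M\setminus m}$. The vertex set of $K_m$ is disjoint from that of the other factors, $L_m\subseteq K_m$, and $\kl^{*\mathrm{lk}_M(m)}\subseteq \kl^{*M\setminus m}$ once the link is regarded on the vertex set of $M\setminus m$. By Lemma \ref{lem: fullsubcomplexpjp}, $\kl^{*M\setminus m}$ is a full subcomplex of $\kl^{*M}$, so the retraction $r$ exists and Proposition \ref{prop: loopspacegen} gives
\[\Omega\ux^{\kl^{*M}}\simeq \Omega\ux^{\kl^{*M\setminus m}}\times\Omega F_m .\]
Since $L_m$ is a full subcomplex of $K_m$, Lemma \ref{lem: fullsubcomplexfibre} gives $\Omega F_m\simeq \Omega\ux^{L_m}\times\Omega(H_m\rtimes G_m)$. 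Lemma \ref{lem: loopspacefullsubfibre} then rewrites $\Omega(H_m\rtimes G_m)$ as $\Omega H_m\times\Omega(\Sigma G_m\wedge\Omega H_m)$. This is precisely Corollary \ref{cor: uxloopspace} in this setting, producing the three factors indexed by $m$.

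Next I would apply the same argument to $\kl^{*M\setminus m}$, which is again a polyhedral join product, now over $M\setminus m$ on the vertex set $[m-1]$. Using vertex $m-1$ and the pushout of Corollary \ref{pjppushout} for $M\setminus m$, the fibre $G_{m-1}$ becomes the homotopy fibre of $\ux^{\kl^{*\mathrm{lk}_{M\setminus m}(m-1)}}\to\ux^{\kl^{*M\setminus\{m,m-1\}}}$, and $H_{m-1}$ is the fibre of $\ux^{K_{m-1}}\to\ux^{L_{m-1}}$. I would induct on $m$, formally on the number of vertices of $M$.

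The base case is $M\setminus\{m,\dots,1\}=\emptyset$, where the polyhedral join product is $L_1*\cdots*L_m$ restricted appropriately. Here I must be careful. At the last stage, $\Omega\ux^{\kl^{*M\setminus\{m,\dots,1\}}}$ is the loop space of $\ux^{L_1*\cdots}$. Since $M\setminus\{m,\dots,i\}$ keeps the $L_j$'s for removed vertices via ghost or join factors, the leftover term should be absorbed. Concretely, each $\Omega\ux^{L_i}$ factor should account for the $L_i$ join factor, using $\ux^{A*B}=\ux^A\times\ux^B$.

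The main obstacle is this bookkeeping: checking that $\ux^{L_i}$ appearing in $\Omega F_i$ accounts for all contributions without double counting. I would handle it by verifying that at each stage the base complex $\kl^{*M\setminus\{m,\dots,i\}}$ is viewed on the vertex set excluding the $K_j$ for $j\ge i$. Ghost vertices contribute trivial factors for $\ux$, as noted before Lemma \ref{lem: fullsubcomplexfibre}, so the residual base at the end is a point and the product is exactly the one claimed.
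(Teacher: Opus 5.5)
Your proposal is correct and is essentially the paper's argument. You iterate Proposition \ref{prop: loopspacegen} through the pushout of Corollary \ref{pjppushout}, one vertex at a time, to get $\Omega\ux^{\kl^{*M}}\simeq\prod_i\Omega F_i$, and then split each $\Omega F_i$ using Lemma \ref{lem: fullsubcomplexfibre} and Lemma \ref{lem: loopspacefullsubfibre}, i.e.\ Corollary \ref{cor: uxloopspace}. Your base-case worry is settled exactly as you eventually say: in \eqref{pushout:gen} the base complex lives on vertices disjoint from those of $K_i$, so $L_i$ enters only through $F_i$, and once every vertex is removed the residual base is $\ux$ over the empty vertex set, which is a point, not anything built from $L_1*\cdots*L_m$.
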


Consider $\kl^{*M}$, where $L_i$ is a full subcomplex of $K_i$ for some $i \in [m]$. Recall that $\kl^{*M}$ can be described as the pushout 
\[\begin{tikzcd}
    \kl^{*\mathrm{lk}_{M}(i)} * L_i \arrow[r] \arrow[d] 
    & \kl^{*\mathrm{lk}_{M}(i)} * K_i \arrow[d] \\
    \kl^{*M \setminus i} * L_i \arrow[r] 
    & \kl^{*M}.
\end{tikzcd}\]

\begin{prop}\label{prop: hyperbolic}
    Let  $\kl^{*M}$ be a polyhedral join product. Suppose for some vertex $i \in [m]$,  $L_i$ is a proper full subcomplex of $K_i$ and $\mathrm{lk}_M(i) \neq M \setminus i$. If $\ux$ is as described in Theorem \ref{thm: hyperboliclocalised}, then $\ux^{\kl^{*M}}$ is rationally hyperbolic, mod-$p^r$ hyperbolic for all but finitely many primes $p$ and all $r \geq 1$, and has no homotopy exponent at all but finitely many primes $p$. 
\end{prop}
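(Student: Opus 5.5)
The plan is to reduce to Theorem \ref{thm: hyperboliclocalised}. The pushout of Corollary \ref{pjppushout} at the chosen vertex $i$ is exactly of the form \eqref{pushout:gen}, with $\overline{M} = \kl^{*M}$. We take $K = K_i$ and $L = L_i$ on the vertex set $[k_i]$. We take $M$ in \eqref{pushout:gen} to be $\kl^{*M\setminus i}$ and $N$ to be $\kl^{*\mathrm{lk}_M(i)}$, both regarded on the vertex set of $\kl^{*M\setminus i}$, with ghost vertices allowed in $N$.

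To apply Theorem \ref{thm: hyperboliclocalised}, two hypotheses must hold.
\begin{enumerate}
\item $L_i \subset K_i$ is a proper full subcomplex. This is given.
\item $N \subset M$, that is, $\kl^{*\mathrm{lk}_M(i)} \subsetneq \kl^{*M\setminus i}$ is a proper inclusion.
\end{enumerate}
The inclusion itself holds because $\mathrm{lk}_M(i) \subseteq M\setminus i$ and the polyhedral join product is monotone in the indexing complex. Once both hypotheses are checked, Theorem \ref{thm: hyperboliclocalised} gives rational hyperbolicity, mod-$p^r$ hyperbolicity for all $r$ at all but finitely many primes, and the absence of exponents at all but finitely many primes.

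The main step is properness. Since $\mathrm{lk}_M(i) \neq M\setminus i$, there is a simplex $\sigma \in M\setminus i$ with $\sigma \notin \mathrm{lk}_M(i)$. I would choose $\sigma$ minimal with this property.

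I expect this step to be the obstacle. The polyhedral join could in principle fail to detect the difference, for instance when $L_j = K_j$ for every $j\in\sigma$. In that case $(\underline K,\underline L)^{*\sigma}$ already lies in the join over smaller simplices. To handle this I would first look at what the proof of Theorem \ref{thm: hyperboliclocalised} actually uses, namely Lemma \ref{lem: rectractG}. That lemma only needs one of two things: a vertex of the ambient complex missing from $N$, or a minimal missing face of $N$ lying in $M$.

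So I would produce such a face directly. If some $j\in\sigma$ has $L_j \neq K_j$, take a vertex $v\in K_j\setminus L_j$ together with full data from the other $j'\in\sigma$. This gives a face of $(\underline K,\underline L)^{*\sigma}$ that is not in $\kl^{*\mathrm{lk}_M(i)}$, and a minimal such face is a minimal missing face of $N$ lying in $M$. This is exactly what Lemma \ref{lem: rectractG} requires. If instead $L_j = K_j$ for all $j \in \sigma$, I would argue that the two polyhedral join products genuinely coincide. In that situation the statement should be read with $\mathrm{lk}_M(i) \neq M\setminus i$ understood at the level of polyhedral joins. I would add a remark making this precise, or adopt the convention, implicit in the paper, that $L_j \subsetneq K_j$ on the relevant vertices.

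Finally, I would check that $\ux$ satisfies the hypotheses of Theorem \ref{thm: hyperboliclocalised}; this is assumed in the statement. With that, the conclusions follow verbatim from the theorem. Concretely, the decomposition of Corollary \ref{cor: uxfullsubloopspace} exhibits $\Omega(\Sigma G_i\wedge \Omega H_i)$ as a factor of $\Omega\ux^{\kl^{*M}}$. After localisation, a wedge of at least two spheres retracts off $\Sigma G_i\wedge\Omega H_i$: Lemma \ref{lem: retractH} supplies the contribution from $H_i$, and Lemma \ref{lem: rectractG} supplies the contribution from $G_i$.
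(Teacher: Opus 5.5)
Your reduction is exactly the paper's. Its proof is the single sentence applying Theorem~\ref{thm: hyperboliclocalised} to the pushout of Corollary~\ref{pjppushout}, with $N=\kl^{*\mathrm{lk}_M(i)}$, ambient complex $\kl^{*M\setminus i}$, $L=L_i$ and $K=K_i$.

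You are right that this needs the inclusion $\kl^{*\mathrm{lk}_M(i)}\subseteq\kl^{*M\setminus i}$ to be proper, and the paper never checks it. If the two complexes coincide, then $G\simeq *$ and both cases of Lemma~\ref{lem: rectractG} are vacuous. The pushout also collapses to $\kl^{*M\setminus i}*K_i$, so the polyhedral product is just $\ux^{\kl^{*M\setminus i}}\times\ux^{K_i}$.

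The statement as written is in fact false. Take $M=\{\emptyset,\{1\},\{2\}\}$ and $i=1$. Let $(K_1,L_1)=(\Delta^1,\{a\})$ with $\Delta^1$ on $\{a,b\}$, and $(K_2,L_2)=(\{w\},\{w\})$; pairs with $L_j=K_j$ are allowed, cf.\ Example~\ref{pjpex1}. Then $\mathrm{lk}_M(1)=\{\emptyset\}\neq M\setminus 1$ and $L_1$ is a proper full subcomplex of $K_1$. But $\kl^{*M}=(\Delta^1*w)\cup(a*w)=\Delta^2$. With every $X_j=S^2$ the polyhedral product is $(S^2)^3$, which is rationally elliptic and has an exponent at every prime. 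So your fallback of reading the hypothesis as $\kl^{*\mathrm{lk}_M(i)}\neq\kl^{*M\setminus i}$ is not an optional convention. It is the repair the proposition needs.

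Your analysis of when properness holds is, however, wrong in both branches. Put $J=\{j\neq i : L_j\neq K_j\}$. Then $\kl^{*\sigma}=\kl^{*(\sigma\cap J)}$. Moreover, the face $\bigsqcup_{j\in\sigma}\rho_j$ with $\rho_j\in K_j\setminus L_j$ (empty in the other join factors) lies in $\kl^{*\tau}$ if and only if $\sigma\subseteq\tau$. Hence the inclusion is proper if and only if some $\sigma\in M\setminus i$ with $\sigma\subseteq J$ is not in $\mathrm{lk}_M(i)$.

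\begin{itemize}
\item Your first branch fails when only some $j\in\sigma$ lie in $J$. The face you build then lies in $\kl^{*(\sigma\cap J)}$, and $\sigma\cap J\subsetneq\sigma$ is in the link by minimality of $\sigma$. For instance, take $M=\partial\Delta^2$, $i=1$, $\sigma=\{2,3\}$, $(K_1,L_1)=(\Delta^1,\{a\})$, $(K_2,L_2)=(\{v\},\emptyset)$ and $(K_3,L_3)=(\{w\},\{w\})$. Then $\kl^{*M}=\Delta^3$, again elliptic.
\item Your second branch fails too. Having $L_j=K_j$ on one minimal $\sigma$ does not force the joins to coincide, since another non-link simplex may lie in $J$. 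Consider three disjoint vertices with $L_2=K_2$ and $L_3\neq K_3$.
\item You also need faces $\rho_j\in K_j\setminus L_j$ rather than vertices, since $L_j$ may contain every vertex of $K_j$.
\end{itemize}

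Once the criterion above is added as a hypothesis, Theorem~\ref{thm: hyperboliclocalised} applies verbatim. It holds, for example, whenever $L_j\neq K_j$ for all $j\neq i$.

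Finally, the splitting to quote is Corollary~\ref{cor: uxloopspace} at the vertex $i$, not Corollary~\ref{cor: uxfullsubloopspace}. The latter assumes every $L_j$ is full in $K_j$ and deletes vertices in the order $m,m-1,\dots$.
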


\begin{proof}
    This follows from Corollary \ref{pjppushout} and Theorem \ref{thm: hyperboliclocalised}, with $\overline{M} = \kl^{*M}$, $N = \kl^{*\mathrm{lk}_M(i)}$, $M = \kl^{*M \setminus i}$, $L = L_i$ and $K = K_i$. 
\end{proof}

\bibliographystyle{alpha}
\bibliography{bib}

@article {grbic2011homotopy,
    AUTHOR = {Grbi\'{c}, J. and Theriault, S.},
     TITLE = {The homotopy type of the polyhedral product for shifted
              complexes},
   JOURNAL = {Adv. Math.},
    VOLUME = {245},
      YEAR = {2013},
     PAGES = {690--715},
      ISSN = {0001-8708,1090-2082},
   MRCLASS = {55P15 (13F55 52B70 52C35)},
  MRNUMBER = {3084441},
MRREVIEWER = {Carles\ Broto},
       DOI = {10.1016/j.aim.2013.05.002},
       URL = {https://doi.org/10.1016/j.aim.2013.05.002},
}

@article {Cube,
    AUTHOR = {Mather, M.},
     TITLE = {Pull-backs in homotopy theory},
   JOURNAL = {Canad. J. Math.},

    VOLUME = {28},
      YEAR = {1976},
    NUMBER = {2},
     PAGES = {225--263},
      ISSN = {0008-414X,1496-4279},
   MRCLASS = {54E60},
  MRNUMBER = {402694},
MRREVIEWER = {Edgar\ H.\ Brown, Jr.},
       DOI = {10.4153/CJM-1976-029-0},
       URL = {https://doi.org/10.4153/CJM-1976-029-0},
}

@article {Abramyan_2019,
    AUTHOR = {Abramyan, S. A. and Panov, T. E.},
     TITLE = {Higher {W}hitehead products for moment-angle complexes and
              substitutions of simplicial complexes},
   JOURNAL = {Tr. Mat. Inst. Steklova},
    VOLUME = {305},
      YEAR = {2019},
     PAGES = {7--28},
      ISSN = {0371-9685},
   MRCLASS = {57Q05 (14M25 55Q15)},
  MRNUMBER = {4017598},
       DOI = {10.4213/tm3995},
       URL = {https://doi.org/10.4213/tm3995},
}

@incollection{soton467338,
           month = {March},
           title = {Polyhedral products for wheel graphs and their generalizations},
          author = {S. Theriault},
            BOOKTITLE = {Toric {T}opology and {P}olyhedral {P}roducts},
    SERIES = {Fields Inst. Commun.},
    VOLUME = {89},
     PAGES = {295--311},
 PUBLISHER = {Springer, Cham},
      YEAR = {2024},
      ISBN = {9783031572036; 9783031572043},
   MRCLASS = {99-06},
  MRNUMBER = {4789603},
       DOI = {10.1007/978-3-031-57204-3\_15},
       URL = {https://doi.org/10.1007/978-3-031-57204-3_15},
}

@article {HAO_2018,
    AUTHOR = {Hao, Y. and Sun, Q. and Theriault, S.},
     TITLE = {Moore's conjecture for polyhedral products},
   JOURNAL = {Math. Proc. Cambridge Philos. Soc.},
    VOLUME = {167},
      YEAR = {2019},
    NUMBER = {1},
     PAGES = {23--33},
      ISSN = {0305-0041,1469-8064},
   MRCLASS = {55Q52 (55P62)},
  MRNUMBER = {3968818},
MRREVIEWER = {Daisuke\ Kishimoto},
       DOI = {10.1017/s0305004118000154},
       URL = {https://doi.org/10.1017/s0305004118000154},
}

@article{stanton2024loop,
    author = {L. Stanton},
    title = {Loop space decompositions of moment-angle complexes associated to flag complexes},
    journal = {Q. J. Math.},
    year = {2024},
    VOLUME = {75},
    NUMBER = {2},
     PAGES = {457-477}
}

@article {Panov_2018,
    AUTHOR = {Panov, T. and Theriault, S.},
     TITLE = {The homotopy theory of polyhedral products associated with
              flag complexes},
   JOURNAL = {Compos. Math.},
    VOLUME = {155},
      YEAR = {2019},
    NUMBER = {1},
     PAGES = {206--228},
      ISSN = {0010-437X,1570-5846},
   MRCLASS = {55P35 (05E45)},
  MRNUMBER = {3880029},
MRREVIEWER = {Daisuke\ Kishimoto},
       DOI = {10.1112/s0010437x18007613},
       URL = {https://doi.org/10.1112/s0010437x18007613},
}

@article {porter,
    AUTHOR = {Porter, G. J.},
     TITLE = {The homotopy groups of wedges of suspensions},
   JOURNAL = {Amer. J. Math.},
    VOLUME = {88},
      YEAR = {1966},
     PAGES = {655--663},
      ISSN = {0002-9327,1080-6377},
   MRCLASS = {55.45},
  MRNUMBER = {200926},
MRREVIEWER = {N.\ Kuiper},
       DOI = {10.2307/2373148},
       URL = {https://doi.org/10.2307/2373148},
}

@article{Denham_2007,
   title={Moment-angle Complexes, Monomial Ideals and Massey Products},
   volume={3},
   ISSN={1558-8602},
   url={http://dx.doi.org/10.4310/PAMQ.2007.v3.n1.a2},
   DOI={10.4310/pamq.2007.v3.n1.a2},
   number={1},
   JOURNAL = {Pure Appl. Math. Q.},
   publisher={International Press of Boston},
   author={Denham, G. and Suciu, A. I.},
   year={2007},
   pages={25–60} }

@article {construct,
    AUTHOR = {Ayzenberg, A. A.},
     TITLE = {Substitutions of polytopes and of simplicial complexes, and
              multigraded {B}etti numbers},
   JOURNAL = {Trans. Moscow Math. Soc.},
      YEAR = {2013},
     PAGES = {175--202},
      ISSN = {0077-1554,1547-738X},
   MRCLASS = {05E45 (13F55 52B05 55U10)},
  MRNUMBER = {3235795},
MRREVIEWER = {Steven\ Klee},
       DOI = {10.1090/s0077-1554-2014-00224-7},
       URL = {https://doi.org/10.1090/s0077-1554-2014-00224-7},
}

@article {MR3633135,
    AUTHOR = {Bahri, A. and Bendersky, M. and Cohen, F. R. and Gitler, S.},
     TITLE = {On the free loop spaces of a toric space},
   JOURNAL = {Bol. Soc. Mat. Mex. (3)},
    VOLUME = {23},
      YEAR = {2017},
    NUMBER = {1},
     PAGES = {257--265},
      ISSN = {1405-213X,2296-4495},
   MRCLASS = {55P62 (55P35 55U10)},
  MRNUMBER = {3633135},
MRREVIEWER = {Jelena\ Grbi\'{c}},
       DOI = {10.1007/s40590-016-0124-8},
       URL = {https://doi.org/10.1007/s40590-016-0124-8},
}

@book{buchstaber2014toric,
    AUTHOR = {V. Buchstaber and T. Panov},
     TITLE = {Toric topology},
    SERIES = {Mathematical Surveys and Monographs},
    VOLUME = {204},
 PUBLISHER = {American Mathematical Society, Providence, RI},
      YEAR = {2015},
     PAGES = {xiv+518},
      ISBN = {978-1-4704-2214-1},
   MRCLASS = {57Sxx (13F55 14M25 52B20 55N22 55P62 57Qxx)},
  MRNUMBER = {3363157},
MRREVIEWER = {Shintar\^{o} Kuroki},
       DOI = {10.1090/surv/204},
       URL = {https://doi.org/10.1090/surv/204},
}

@article{Eldridge_2026, title={Loop spaces of polyhedral products associated with the polyhedral join product}, volume={69}, DOI={10.1017/S0013091525101223}, number={2}, journal={Proc. Edinb. Math. Soc. (2)}, author={Eldridge, B.}, year={2026}, pages={674–700}}

@incollection {bahri2024polyhedralproductsfeatureshomotopy,
    AUTHOR = {Bahri, A. and Bendersky, M. and Cohen, F. R.},
     TITLE = {Polyhedral products and features of their homotopy theory},
 BOOKTITLE = {Handbook of homotopy theory},
    SERIES = {CRC Press/Chapman Hall Handb. Math. Ser.},
     PAGES = {103--144},
 PUBLISHER = {CRC Press, Boca Raton, FL},
      YEAR = {[2020] \copyright 2020},
      ISBN = {978-0-815-36970-7},
   MRCLASS = {55P42 (13F55 14M25 55Q15)},
  MRNUMBER = {4197983},
MRREVIEWER = {Jes\'us\ Gonz\'alez},
}

@article{Hilton,
    author = {Hilton, P. J.},
    title = {On the Homotopy Groups of the Union of Spheres},
    journal = {J. Lond. Math. Soc.},
    volume = {s1-30},
    number = {2},
    pages = {154-172},
    year = {1955},
    month = {04},
    issn = {0024-6107},
    doi = {10.1112/jlms/s1-30.2.154},
    url = {https://doi.org/10.1112/jlms/s1-30.2.154},
    eprint = {https://academic.oup.com/jlms/article-pdf/s1-30/2/154/2474879/s1-30-2-154.pdf},
}

@article{McW,
 ISSN = {00029939, 10886826},
 URL = {http://www.jstor.org/stable/2046328},
 author = {C. A. McGibbon and C. W. Wilkerson},
 journal = {Proc. Amer. Math. Soc.},
 number = {4},
 pages = {698--702},
 publisher = {American Mathematical Society},
 title = {Loop Spaces of Finite Complexes at Large Primes},
 urldate = {2025-05-03},
 volume = {96},
 year = {1986}
}

@article{moore1,
 AUTHOR = {Cohen, F. R. and Moore, J. C. and Neisendorfer, J. A.},
     TITLE = {Torsion in homotopy groups},
   JOURNAL = {Ann. of Math. (2)},
  FJOURNAL = {Annals of Mathematics. Second Series},
    VOLUME = {109},
      YEAR = {1979},
    NUMBER = {1},
     PAGES = {121--168},
      ISSN = {0003-486X},
   MRCLASS = {55Q40},
  MRNUMBER = {519355},
MRREVIEWER = {V.\ P.\ Snaith},
       DOI = {10.2307/1971269},
       URL = {https://doi.org/10.2307/1971269},
}

@article{Moore2,
    AUTHOR = {Cohen, F. R. and Moore, J. C. and Neisendorfer, J. A.},
     TITLE = {The double suspension and exponents of the homotopy groups of
              spheres},
   JOURNAL = {Ann. of Math. (2)},
  FJOURNAL = {Annals of Mathematics. Second Series},
    VOLUME = {110},
      YEAR = {1979},
    NUMBER = {3},
     PAGES = {549--565},
      ISSN = {0003-486X},
   MRCLASS = {55Q40},
  MRNUMBER = {554384},
MRREVIEWER = {M.\ Mahowald},
       DOI = {10.2307/1971238},
       URL = {https://doi.org/10.2307/1971238},
}

@inbook{Neisendorfer,
url = {https://doi.org/10.1515/9781400882113-003},
title = {II. The Exponent of a Moore Space},
booktitle = {Algebraic Topology and Algebraic K-Theory (AM-113), Volume 113},
author = {J. A. Neisendorfer},
publisher = {Princeton University Press},
address = {Princeton},
pages = {35--71},
doi = {doi:10.1515/9781400882113-003},
isbn = {9781400882113},
year = {1988},
lastchecked = {2025-05-05}
}

@article {Serre,
    AUTHOR = {Serre, J.P.},
     TITLE = {Homologie singuli\`ere des espaces fibr\'es. {A}pplications},
   JOURNAL = {Ann. of Math. (2)},
  FJOURNAL = {Annals of Mathematics. Second Series},
    VOLUME = {54},
      YEAR = {1951},
     PAGES = {425--505},
      ISSN = {0003-486X},
   MRCLASS = {56.0X},
  MRNUMBER = {45386},
MRREVIEWER = {W.\ S.\ Massey},
       DOI = {10.2307/1969485},
       URL = {https://doi.org/10.2307/1969485},
}

@incollection{Theriault2024stella,
  author    = {S. Theriault},
  title     = {Stellar Subdivision and Polyhedral Products},
  booktitle = {Topology, Geometry, Combinatorics, and Mathematical Physics},
  note      = {Collected papers dedicated to Victor M. Buchstaber on the occasion of his 80th birthday},
  series    = {Trudy Mat. Inst. Steklova},
  volume    = {326},
  year      = {2024},
  pages     = {314--329},
  publisher = {Steklov Mathematical Institute},
  address   = {Moscow},
  doi       = {10.4213/tm4398}
}

@article{HuangTheriault2024,
  author    = {Huang, R. and Theriault, S.},
  title     = {Exponential growth in the rational homology of free loop spaces and in torsion homotopy groups},
  journal   = {Ann. Inst. Fourier (Grenoble)},
  volume    = {74},
  number    = {4},
  year      = {2024},
  pages     = {1365--1382},
  doi       = {10.5802/aif.3627}
}

@article{Boyde2022,
  author    = {Boyde, Guy},
  title     = {p-Hyperbolicity of homotopy groups via K-theory},
  journal   = {Math. Z.},
  volume    = {301},
  number    = {1},
  pages     = {977--1009},
  year      = {2022},
  doi       = {10.1007/s00209-021-02917-1},
  url       = {https://doi.org/10.1007/s00209-021-02917-1},
  issn      = {1432-1823}
}

@inproceedings{NeisendorferSelick1982,
  author    = {Neisendorfer, J. A. and Selick, P.},
  title     = {Some examples of spaces with or without exponents},
  booktitle = {Current Trends in Algebraic Topology, Part 1},
  address   = {London, Ont., Canada},
  year      = {1981},
  pages     = {343--357},
  series    = {CMS Conference Proceedings},
  volume    = {2},
  publisher = {American Mathematical Society},
  location  = {Providence, RI},
}

@article{Kim2018,
  author    = {Kim, J. H.},
  title     = {Real polyhedral products, Moore's conjecture, and simplicial actions on real toric spaces},
  journal   = {Bull. Korean Math. Soc.},
  volume    = {55},
  number    = {4},
  pages     = {1051--1063},
  year      = {2018},
  doi       = {10.4134/BKMS.b170523},
  url       = {http://bkms.kms.or.kr/journal/view.html?doi=10.4134/BKMS.b170523},
  issn      = {1015-8634, 2234-3016},
  publisher = {The Korean Mathematical Society},
}

@misc{stanton2026anicksconjecturepolyhedralproducts,
      title={Anick's conjecture for polyhedral products}, 
      author={L. Stanton and F. Vylegzhanin},
      year={2026},
      eprint={2506.15573},
      archivePrefix={arXiv},
      primaryClass={math.AT},
      url={https://arxiv.org/abs/2506.15573}, 
      note={arXiv: 2506.15573}
}

@article{Selick_1983, title={On conjectures of Moore and Serre in the case of torsion-free suspensions}, volume={94}, DOI={10.1017/S0305004100060916}, number={1}, journal={Math. Proc. Cambridge Philos. Soc.}, author={Selick, P.}, year={1983}, pages={53–60}}

@article{Theriault_2024, title={Moore’s conjecture for connected sums}, volume={67}, DOI={10.4153/S0008439523000930}, number={2}, journal={Canad. Math. Bull.}, author={Theriault, S.}, year={2024}, pages={516–531}}

@book{FelixHalperinThomas2001,
  author    = {Félix, Y. and Halperin, S. and Thomas, J.C.},
  title     = {Rational Homotopy Theory},
  series    = {Graduate Texts in Mathematics},
  volume    = {205},
  edition   = {1},
  publisher = {Springer},
  address   = {New York, NY},
  year      = {2001},
  doi       = {10.1007/978-1-4613-0105-9},
  isbn      = {978-0-387-95068-6, 978-1-4612-6516-0, 978-1-4613-0105-9},
  pages     = {XXXIII+539},
  series_issn = {0072-5285},
  series_eissn = {2197-5612}
}

@article{Jamessphere,
 ISSN = {0003486X, 19398980},
 URL = {http://www.jstor.org/stable/1970011},
 author = {I. M. James},
 journal = {Ann. of Math. (2)},
 number = {3},
 pages = {407--429},
 publisher = {[Annals of Mathematics, Trustees of Princeton University on Behalf of the Annals of Mathematics, Mathematics Department, Princeton University]},
 title = {The Suspension Triad of a Sphere},
 urldate = {2026-05-01},
 volume = {63},
 year = {1956}
}

@article{Toda1956,
  author    = {Toda, H.},
  title     = {On the double suspension $E_2$},
  journal   = {J. Inst. Polytech. Osaka City Univ. Ser. A},
  volume    = {7},
  year      = {1956},
  pages     = {103--145}
}

@phdthesis{Long1978,
  author       = {Long, J.},
  title        = {Thesis},
  school       = {Princeton University},
  year         = {1978},
  type         = {Ph.D. Thesis}
}

@article{ChacholskiPitschSchererStanley2008,
  author    = {Chachólski, W. and Pitsch, W. and Scherer, J. and Stanley, D.},
  title     = {Homotopy exponents for large H-spaces},
  journal = {Int. Math. Res. Not. IMRN},
    volume = {2008},
    year = {2008},
  pages     = {Art. ID rnn061, 5 pp.},
  doi       = {10.1093/imrn/rnn061}
}

@article{Stelzer2004,
  author    = {Stelzer, M.},
  title     = {Hyperbolic spaces at large primes and a conjecture of Moore},
  journal   = {Topology},
  volume    = {43},
  year      = {2004},
  pages     = {667--675}
}

@incollection{Anick1989,
  author    = {Anick, D. J.},
  title     = {Homotopy exponents for spaces of category two},
  booktitle = {Algebraic Topology (Arcata, CA, 1986)},
  series    = {Lecture Notes in Mathematics},
  volume    = {1370},
  pages     = {24--52},
  publisher = {Springer},
  address   = {Berlin},
  year      = {1989}
}

@article{DAVIS198939,
title = {The image of the stable J-homomorphism},
journal = {Topology},
volume = {28},
number = {1},
pages = {39-58},
year = {1989},
issn = {0040-9383},
doi = {https://doi.org/10.1016/0040-9383(89)90031-1},
url = {https://www.sciencedirect.com/science/article/pii/0040938389900311},
author = {D. M. Davis and M. Mahowald}
}

@phdthesis{soton504036,
           title = {Polyhedral products associated with polyhedral join products},
          school = {University of Southampton},
          author = {B. Eldridge},
       publisher = {University of Southampton},
            year = {2025},
             url = {https://eprints.soton.ac.uk/504036/},
}
\end{document}